\UseRawInputEncoding
\documentclass[11pt,twoside]{article}


\setlength{\textwidth}{\paperwidth}
\addtolength{\textwidth}{-6cm}
\setlength{\textheight}{\paperheight}
\addtolength{\textheight}{-4cm}
\addtolength{\textheight}{-1.1\headheight}
\addtolength{\textheight}{-\headsep}
\addtolength{\textheight}{-\footskip}
\setlength{\oddsidemargin}{0.5cm}
\setlength{\evensidemargin}{0.5cm}

\usepackage{fullpage, amsthm, amsmath, amssymb, hyperref, dsfont, mathtools,bm}
\usepackage{framed}
\usepackage{caption,subcaption,graphicx,rotating,multirow}
\usepackage{url}
\usepackage{color,xcolor}
\usepackage{enumitem}
\usepackage{epstopdf}

\theoremstyle{plain}
\newtheorem{theorem}{Theorem}
\newtheorem{definition}[theorem]{Definition}

\newtheorem*{remark}{Remark}
\newtheorem{lemma}[theorem]{Lemma}

\newtheorem{corollary}[theorem]{Corollary}
\newtheorem{conjecture}{Conjecture}

\newtheorem{proposition}[theorem]{Proposition}

\usepackage[mathscr]{euscript}
\DeclareSymbolFont{rsfs}{U}{rsfs}{m}{n}
\DeclareSymbolFontAlphabet{\mathscrsfs}{rsfs}

\newcommand{\one}{\mathbf{1}}
\def\Par{{\sf P}}
\newcommand{\rmd}{\mathrm{d}}
\newcommand{\x}{\mathbf{x}}

\renewcommand{\u}{\bm{u}}
\newcommand{\bu}{\bm{u}}
\renewcommand{\v}{\bm{v}}

\newcommand{\m}{\bm{m}}

\newcommand{\N}{\mathbb{N}}

\newcommand{\R}{\mathbb{R}}

\newcommand{\rmS}{\mathrm{S}}
\newcommand{\sphere}{\mathbb{S}^{N-1}(\sqrt{N})}
\newcommand{\bg}{\bm{g}}
\newcommand{\bsigma}{\bm{\sigma}}
\newcommand{\blambda}{\bm{\lambda}}
\newcommand{\bA}{\bm{A}}
\newcommand{\bG}{\bm{G}}
\newcommand{\bM}{\bm{M}}

\DeclareMathOperator*{\plim}{p-lim}
\DeclareMathOperator{\var}{var}

\renewcommand{\P}{\operatorname{\mathbb{P}}}
\newcommand{\E}{\operatorname{\mathbb{E}}}
\newcommand{\argmin}{\operatorname{argmin}}

\newcommand{\normal}{\ensuremath{\mathcal{N}}}
\newcommand{\indi}{\mathds{1}}
\newcommand{\mF}{\mathcal{F}}
\newcommand{\mA}{\mathcal{A}}
\newcommand{\mN}{\mathcal{N}}

\def\cuU{\mathscrsfs{U}}

\def\RS{\mbox{\rm RS}}
\def\sRS{\mbox{\rm\tiny RS}}

\def\RSB{\text{\rm\small RSB}}
\def\FRSB{\text{\rm\small FRSB}}

\def\SF{{\sf SF}}
\def\PDE{\text{\rm\small PDE}}
\def\SATUNSAT{\text{\rm\small SAT-UNSAT}}
\def\sSAT{\text{\rm\tiny SAT}}
\def\sG{\text{\rm\tiny G}}
\def\AMP{\text{\rm\small AMP}}
\def\IAMP{\text{\rm\small IAMP}}
\def\RSAMP{\text{\rm\small RS-AMP}}
\def\TAP{\text{\rm\small TAP}}
\def\SK{\text{\rm\small SK}}
\newcommand{\eps}{\varepsilon}
\newcommand{\ubq}{\bar{q}}
\newcommand{\lbq}{\underline{q}}
\newcommand{\bigo}{\ensuremath{\mathcal{O}}}
\newcommand\smallo{
  \mathchoice
    {{\scriptstyle\mathcal{O}}}
    {{\scriptstyle\mathcal{O}}}
    {{\scriptscriptstyle\mathcal{O}}}
    {\scalebox{.7}{$\scriptscriptstyle\mathcal{O}$}}
  }

\numberwithin{equation}{section}

\begin{document}

\title{
{\bf{\LARGE{Algorithmic pure states for the negative spherical perceptron}}}
}

\author{Ahmed {El Alaoui}\thanks{The Simons Institute for the Theory of Computing, UC Berkeley.} \and Mark Sellke\thanks{Department of Mathematics, Stanford University.}}

\date{}
\maketitle

\vspace*{-.3in} 
\begin{abstract}
We consider the spherical perceptron with Gaussian disorder. This is the set $S$ of points $\bsigma \in \R^N$ on the sphere of radius $\sqrt{N}$ satisfying $\langle \bg_a , \bsigma \rangle \ge \kappa\sqrt{N}\,$ for all $1 \le a \le M$, where $(\bg_a)_{a=1}^M$ are independent standard gaussian vectors  and $\kappa \in \R$ is fixed. Various characteristics of $S$ such as its surface measure and the largest $M$ for which it is non-empty, were computed heuristically in statistical physics in the asymptotic regime $N \to \infty$, $M/N \to \alpha$. The case $\kappa<0$ is of special interest as $S$ is conjectured to exhibit a hierarchical tree-like geometry known as \emph{full replica-symmetry breaking} ($\FRSB$) close to the satisfiability threshold $\alpha_{\sSAT}(\kappa)$, and whose characteristics are captured by a Parisi variational principle akin to the one appearing in the Sherrington-Kirkpatrick model. 
In this paper we design an efficient algorithm which, given oracle access to the solution of the Parisi variational principle, exploits this conjectured $\FRSB$ structure for $\kappa<0$ and outputs a vector $\hat{\bsigma}$ satisfying $\langle \bg_a , \hat{\bsigma}\rangle \ge \kappa \sqrt{N}$ for all $1\le a \le M$ and lying on a sphere of non-trivial radius $\sqrt{\ubq N}$, where $\ubq \in (0,1)$ is the right-end of the support of the associated Parisi measure. We expect $\hat{\bsigma}$ to be approximately the barycenter of a \emph{pure state} of the spherical perceptron. Moreover we expect that $\ubq \to 1$ as $\alpha  \to \alpha_{\sSAT}(\kappa)$, so that $\big\langle \bg_a,\frac{\hat{\bsigma}}{|\hat{\bsigma}|}\big\rangle \geq (\kappa-o(1))\sqrt{N}$ near criticality.
\end{abstract}

\section{Introduction and main result}
Let $\bG$ be an $M\times N$ matrix with i.i.d.\ standard Gaussian entries. Denote its rows by $(\bg_{a})_{a=1}^{M}$, and let $\kappa \in \R$ be a fixed parameter. We consider the problem of finding a vector $\bsigma \in \sphere$ on the sphere of radius $\sqrt{N}$ such that 
\begin{equation}\label{eq:perceptron}
\bG\bsigma \ge \kappa \sqrt{N} \one. 
\end{equation} 
The above inequality is interpreted entrywise: $\langle \bg_a , \bsigma \rangle  \ge \kappa \sqrt{N}$ for all $1 \le a \le M$.
We refer to this as the spherical perceptron problem with Gaussian disorder.  
Let \[\rmS_{M,N}(\bG) := \left\{\bsigma \in \sphere ~:~ \bG\bsigma \ge \kappa \sqrt{N} \one \right\}\] be the set of solutions
and let $M_N$ be the largest value of the number $M$ of hyperplanes for which $\rmS_{M,N}$ is non-empty:
\[M_N : = \max\big\{M \in \mathbb{N} ~:~ \rmS_{M,N}(\bG) \neq \varnothing\big\}.\]  
This is a random variable which depends on the disorder matrix $\bG$. 
The ratio \[\alpha_N := \frac{M_{N}}{N}\] is known as \emph{the storage capacity} of the perceptron. 
The spherical perceptron problem appeared in the statistical physics literature in the late 1980's relating to the early theory on neural networks. It is also closely related to questions in discrepancy theory where typically, the inequalities~\eqref{eq:perceptron} are replaced by $|\bG\bsigma| \le \kappa \sqrt{N} \one$, the matrix $\bG$ is possibly non-random, and one searches for a binary solutions $\bsigma \in \{\pm 1\}^N$, see e.g.,~\cite{spencer1985six,bansal2010constructive,rothvoss2017constructive,bansal2019line,alweiss2020discrepancy,turner2020balancing} and references therein. 
           
Gardner~\cite{gardner1988space} famously computed the typical measure of $\rmS_{M,N}(\bG)$ in logarithmic scale, and determined the asymptotic behavior of $\alpha_N$ when $\kappa \ge 0$ using the physicists' replica method. Further results based both on replica and cavity methods were soon after obtained in~\cite{gardner1988optimal,mezard1989space}.  
Gardner's formula was rigorously confirmed in three mathematical treatments~\cite{shcherbina2003rigorous,talagrand2011mean2,stojnic2013another}, each one using a different technique. Furthermore, the algorithmic question of \emph{finding} a solution essentially reduces to a convex optimization problem when $\kappa\ge 0$. Indeed, the constraint $\|\bsigma\|_2 = \sqrt{N}$ can be relaxed to $\|\bsigma\|_2 \le \sqrt{N}$ after which we obtain a convex feasibility problem with linear inequalities. This problem can be solved to arbitrary accuracy with an interior point method, which returns a solution $\bsigma^*$ in the unit ball whenever the polytope $\{\bsigma \in \R^N : \bG\bsigma \ge \kappa \sqrt{N} \one \}$ has non-empty interior (see e.g., the monograph~\cite{BoydVa04}). Then the vector $\bsigma^*$ can be scaled up to have norm exactly $\sqrt{N}$. Since $\kappa \ge 0$ the newly obtained vector still satisfies the linear inequalities.   

The case $\kappa<0$ is much less explored and will be the main focus of this paper. This case has only been recently studied in~\cite{jones2020spherical} in the worst-case regime where $\bG$ has arbitrary rows of norm $\sqrt{N}$. The authors propose an efficient algorithm for finding a vector $\bsigma \in \rmS_{M,N}(\bG)$ when $M \ge 16 N$ and $\kappa \simeq -\sqrt{2\log(M/N)}$. Returning to the random case, our main result is an efficient algorithm which, for a certain set of `admissible' values of $\kappa$ and the aspect ratio $M/N$, takes the matrix $\bG$ and the parameter $\kappa$ as input, and outputs a vector $\hat{\bsigma}$ satisfying the inequalities~\eqref{eq:perceptron} and lying on a sphere of non-trivial radius $\sqrt{\ubq N}$ with $\ubq \in (0,1)$. The set of admissible values $(\kappa,M/N)$ together with the radius $\ubq$ are described by a certain variational principle which is conjectured to be deeply related to the geometry of the set of solutions $\rmS_{M,N}(\bG)$. Further, we expect that for large $N$, $\ubq \to 1$ as $M/N$ gets close to the storage capacity $\alpha_N$. 
In order to be more precise we must review some of the literature on this subject. 

It was known at the physics level of rigor since the work of Gardner and Derrida~\cite{gardner1988optimal} that the solution obtained in~\cite{gardner1988space} for $\kappa\ge 0$, dubbed ``replica-symmetric", cannot extend to large negative values of $\kappa$, and that a more sophisticated ansatz must be adopted to compute the surface measure of $\rmS_{M,N}(\bG)$ together with the storage capacity $\alpha_N$.
This expectation was partially confirmed mathematically by Stojnic~\cite{stojnic2013another} who showed that $\alpha_N$ is upper-bounded by Gardner's prediction with high probability as $N\to \infty$ for \emph{all} values of $\kappa \in \R$, and in a subsequent treatment~\cite{stojnic2013negative}, he proved an upper bound which is presumably sharper for $\kappa$ large enough in the negative direction, albeit this latter claim was argued via numerical evaluation. 
The spherical perceptron with negative $\kappa$ has been the object of renewed interest from the physics community due to its connection with the phenomenon of jamming of hard spheres in finite dimension, where it is for instance used as a mean-field approximation for the computation of various critical exponents close to the so-called jamming transition~\cite{charbonneau2014exact,franz2016simplest,franz2019critical,franz2020critical}.     
In~\cite{franz2017universality}, Franz and coauthors managed to describe the full phase diagram in the $(\kappa,\alpha)$ plane of the structure of the set of solutions $\rmS_{M,N}(\bG)$. (Here, one considers the `proportional' regime where $M/N \to \alpha$.) It is expected that the structure of $\rmS_{M,N}(\bG)$ undergoes several phase transitions as $\alpha$ is varied while $\kappa$ is fixed at a negative value, the last transition being the $\SATUNSAT$, or \emph{jamming}, transition at the (putative) critical capacity $\alpha_{\sSAT}(\kappa)  := \lim \alpha_N(\kappa)$; see~\cite[Figure 3]{franz2017universality}. This capacity $\alpha_{\sSAT}(\kappa)$ can be characterized in terms of a Parisi variational principle, akin to the one appearing in other mean-field spin glass problems such as the Sherrington-Kirkpatrick ($\SK$) model.
Moreover, it is predicted that the $\SATUNSAT$ line $(\kappa,\alpha_{\sSAT}(\kappa))$ borders a sector $\tilde{\Lambda} = \{(\kappa,\alpha) \in \R_- \times \R_+ : \alpha_{\sG}(\kappa) < \alpha < \alpha_{\sSAT}(\kappa)\}$ in the $(\kappa,\alpha)$ plane where the model has ``continuous" or ``full replica-symmetry breaking" ($\FRSB$). (The threshold $\alpha_{\sG}(\kappa)$ marks the so-called \emph{Gardner transition} from ``$1\RSB$" to $\FRSB$; we do not go into further detail as this transition will be irrelevant to our goal.)  
Mathematically, $\FRSB$ is the property that the support of the Parisi measure produced by the variational principle is an interval $[\lbq,\ubq]$. This is also commonly referred to as a \emph{no overlap gap} condition and will be formally defined in the next section. We are now ready to state out main theorem.              

 \begin{theorem} \label{thm:main}
There exists a set $\Lambda \subset \R_- \times \R_+$ and $\ubq = \ubq(\gamma_*) \in (0,1)$, both defined in Definition~\ref{def:nogap} such that the following holds. Let $(\kappa,\alpha) \in \Lambda$, $\varepsilon>0$, and $M = \lfloor \alpha N\rfloor$. Then there exists an algorithm which takes the matrix $\bG$ and the parameter $\kappa$ as input, runs in time $C(\varepsilon)MN$, and outputs a vector $\bsigma \in \R^N$ with the following two properties holding with probability converging to one as $N \to \infty$:
\[ \left|\frac{\|\bsigma\|_2}{\sqrt{\ubq N}} - 1 \right| \le \varepsilon~~~\mbox{and}~~~ \Big\| \Big(\kappa \one - \frac{1}{\sqrt{N}}\bG\bsigma \Big)_+ \Big\|_2 \le \varepsilon \sqrt{N}.\]  
\end{theorem}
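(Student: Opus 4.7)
The natural approach is an incremental approximate message passing (IAMP) scheme in the spirit of Subag's construction for the spherical SK model and Montanari's algorithm for Ising SK, adapted to the perceptron's one-sided constraints. Concretely, fixing a small discretization parameter $\eta>0$, I would use the hypothesized solution of the Parisi variational principle (which, by $\FRSB$, has an absolutely continuous marginal density on $[\lbq,\ubq]$) to generate a monotone grid $0=q_0<q_1<\dots<q_K=\ubq$ with $q_{k+1}-q_k\approx \eta$, together with the associated ``auxiliary function'' $f$ encoding the Parisi order parameter. The algorithm builds a sequence $\bsigma_0=0,\bsigma_1,\dots,\bsigma_K=\hat\bsigma$ with $\|\bsigma_k\|_2^2\approx q_k N$, where at each step a small increment $\Delta_k$ is produced by running a short AMP recursion driven by the matrix $\bG$ and by the current vector of residuals $\bG\bsigma_k/\sqrt N-\kappa\one$, with nonlinearity chosen so that $\Delta_k\perp \bsigma_k$ (approximately) and so that the output matches the infinitesimal update prescribed by the $\FRSB$ ansatz.

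The core analytic tool is a state evolution recursion tracking the joint empirical law of the pair $(\bG\bsigma_k/\sqrt N,\,\bsigma_k)\in\R^M\times\R^N$. Writing $h_a^{(k)}:=\langle \bg_a,\bsigma_k\rangle/\sqrt N$, I would show by induction over $k$ that $\{h_a^{(k)}\}_{a\le M}$ is approximately an i.i.d.\ sample from a Gaussian process indexed by the time variable $q\in[0,\ubq]$ whose covariance is dictated by the Parisi measure: $\E[h_a^{(k)}h_a^{(\ell)}]\approx q_{\min(k,\ell)}$. The nonlinearity at step $k$ is chosen, using the variational stationarity of Parisi's functional, to be the conditional expectation of the optimal threshold move given the past, so that in the limit $\eta\to 0$ the process $h_a^{(\cdot)}$ realizes the Gaussian diffusion whose terminal distribution at $q=\ubq$ has atoms/mass pushed onto the half-line $[\kappa,\infty)$ in the almost-sure sense demanded by the Parisi boundary conditions. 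The standard Bolthausen--Bayati--Montanari apparatus, extended by an Onsager correction in each of the $K$ phases and glued across phases as in Montanari's paper, provides rigorous asymptotic control of these laws with error $o_N(1)$.

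Given state evolution, the two conclusions of the theorem are read off. The norm claim $\|\bsigma_K\|_2^2\approx \ubq N$ follows because each increment satisfies $\|\Delta_k\|_2^2\approx (q_{k+1}-q_k)N$ and is approximately orthogonal to $\bsigma_k$, so norms add. For the feasibility claim, the distribution of each $h_a^{(K)}$ converges to the terminal marginal of the Parisi diffusion, which by construction is supported on $[\kappa,\infty)$ up to an error vanishing with $\eta$; hence
\[
\frac{1}{M}\sum_{a=1}^{M}\bigl(\kappa - h_a^{(K)}\bigr)_+^{\,2}\;\longrightarrow\;\E\bigl[(\kappa - H)_+^{\,2}\bigr]\;=\;o_\eta(1),
\]
and multiplying by $M=\lfloor\alpha N\rfloor$ yields $\|(\kappa\one-\bG\bsigma_K/\sqrt N)_+\|_2^2\le \eps^2 N$ for $\eta$ small. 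Taking $\eta=\eta(\eps)$ gives the $C(\eps)MN$ runtime since each of the $K=\Theta(1/\eta)$ AMP phases performs $\bigo(1)$ matrix--vector products.

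The main obstacle is the transfer between the \emph{continuum} Parisi variational principle and the \emph{discrete} IAMP iteration: one must argue that the discretized drift/diffusion coefficients produced from the Parisi optimizer genuinely satisfy the nested stationarity relations that make state evolution close at each scale, and simultaneously that the resulting nonlinearities are Lipschitz uniformly in $\eta$ so that the BBM state-evolution theorem applies with vanishing error as $K\to\infty$. A secondary but nontrivial difficulty is the one-sided nature of the perceptron constraint, absent in spin-glass Hamiltonians: ensuring that the terminal law lies in $[\kappa,\infty)$ in $L^2$ (not merely in distribution) requires exploiting that $\FRSB$ forces the Parisi boundary condition to saturate precisely the hard-wall constraint at $q=\ubq$, which is where the no-overlap-gap hypothesis encoded by $(\kappa,\alpha)\in\Lambda$ enters essentially.
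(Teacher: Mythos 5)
Your overall framework — an incremental AMP scheme tracking a state evolution dictated by the Parisi diffusion, using the stationarity identities and the no‑overlap‑gap hypothesis — is exactly the paper's approach for the main (IAMP) phase of the algorithm, and your discussion of the technical obstacles (uniform Lipschitz nonlinearities, discretize‑then‑refine, $L^2$ control at the terminal time) is on target.

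However, there is a concrete gap in your proposal: you initialize the grid at $q_0=0$ with $\bsigma_0=\mathbf{0}$ and run IAMP over the whole interval $[0,\ubq]$. For the one‑sided perceptron this cannot work. Because the terminal condition $\log\mathcal N\big((\kappa-x)/\sqrt{1-q}\big)$ is strictly increasing, one has $\partial_x\Phi_{\gamma_*}>0$ everywhere, and the stationarity relation $\alpha\,\E[\partial_x\Phi_{\gamma_*}(\lbq,X_{\lbq})^2]=\lbq/\lambda(\lbq)^2$ then forces $\lbq(\gamma_*)>0$. The identities \eqref{eq:stationary} that make each IAMP increment have the correct variance and be orthogonal to the current iterate hold \emph{only} on the support $[\lbq,\ubq]$ of $\gamma_*$; they fail on $[0,\lbq)$. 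Moreover, within the state‑evolution framework one cannot simply replace the phase $[0,\lbq)$ by a fresh Gaussian of variance $\lbq$: the IAMP initialization of the magnetization (and the orthogonality identity at the first incremental step, the analogue of \eqref{eq:orthog}) requires the preceding iterate to already sit near a Bolthausen‑type fixed point, with $\E[V^{\underline\ell}V^{\underline\ell-1}]=a_{\underline\ell}\to\lbq$; this correlation is what fixes the step size $\delta$ and kills the nonzero cross term in the base case of Lemma~\ref{lem:increments}. The paper handles this with a separate Stage~I, a replica‑symmetric AMP ($\RSAMP$) iteration \eqref{eq:rs_amp} whose state evolution converges to the fixed point $\lbq$ (Lemmas~\ref{lem:se_rs}--\ref{cor:a_k}); this stage is absent in the SK/$p$‑spin IAMP precisely because there $\lbq=0$, but it is essential here.

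Two smaller remarks. First, the final feasibility claim needs a quantitative statement: in the paper this is Lemma~\ref{lem:ge_kappa}, which uses the Cole--Hopf form on $[\ubq,1]$ and the inverse Mill's ratio bound $\mA(x)\ge x\vee 0$ to get $\E[X_1\mid\mF_{\ubq}]\ge\kappa$ almost surely, after which one applies state evolution to the pseudo‑Lipschitz test function $(\kappa-x)_+^2$; you sketch the conclusion but not the mechanism by which the Parisi boundary condition saturates the constraint. Second, the theorem as stated does not require a rounding step (the conclusions are approximate), so your omission of the convex‑projection Stage~III is fine for this statement; that stage is only needed to upgrade to the exact feasibility in \eqref{eq:round1}--\eqref{eq:round2}.
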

Note that given $\bsigma$ (and under the mild condition $\alpha \neq 1$) one can extract a vector $\hat{\bsigma} \in \R^N$ with either one of the following two properties:
\begin{align}
\big|\|\hat{{\bsigma}}\|_2 - \sqrt{\ubq N}\big| &\le c_0\varepsilon \sqrt{\ubq N}~~~~~ \mbox{and} ~~~~~ \bG\hat{{\bsigma}} \ge \kappa \sqrt{N} \one \, ,\label{eq:round1} \\
\mbox{or}~~~~~\|\hat{{\bsigma}}\|_2 &= \sqrt{\ubq N}~~~~~ \mbox{and}~~~~~\bG\hat{{\bsigma}} \ge  \frac{\kappa}{1-c_0\varepsilon} \sqrt{N} \one \, , \label{eq:round2}
\end{align}
where $c_0 = c_0(\alpha)>0$.
To obtain~\eqref{eq:round1}, we project $\bsigma$ over the set of linear constraints~\eqref{eq:perceptron}. To obtain~\eqref{eq:round2}, we additionally project on the sphere of radius $\sqrt{\ubq N}$.  These projection steps can be performed for instance via a cutting plane method at the price of $C(\varepsilon)N^3(\log N)^c$ arithmetic operations with $c>0$ a universal constant for an $\varepsilon$-accurate solution. We refer to Section~\ref{sec:rounding} for the precise statements.
  
We now make a few clarifying remarks. 

  The set $\Lambda$ is defined in Section~\ref{sec:Parisi_var} as the set of points for which the model is $\FRSB$, and is expected to take the form of a sector $\tilde{\Lambda} = \{(\kappa,\alpha) \in \R_- \times \R_+ : \alpha_{\sG}(\kappa) < \alpha < \alpha_{\sSAT}(\kappa)\}$ as described above. 
We do not prove that the set $\Lambda$ is non-empty, therefore our result is conditional in nature. As we explain in the next section, $\FRSB$ is expected but remains an unsolved conjecture in mathematical physics. A similar conjecture pertaining to the case of the Sherrington-Kirkpatrick model has been recently used in~\cite{mon18} to produce an algorithm returning a point on the hypercube approximately maximizing the random quadratic $\langle \bsigma , \bA \bsigma \rangle$, where $\bA$ is a GOE matrix.  

The radius $\ubq = \ubq(\kappa,\alpha)$ has a special significance; it is the right-end of the support of the Parisi measure for the spherical perceptron (see next section), and the following picture is expected. Letting $\mu_N$ denote the surface measure on the sphere $\mathbb S^{N-1}(\sqrt{N})$, one can decompose the set of solutions $\rmS_{M,N}(\bG)$ into a random countable disjoint union $(\cup_a C_a) \cup R$ such that $\mu_N(R)=\smallo(\mu_N(\rmS_{M,N}(\bG)))$, and for each $a$, the subset $C_a$---called a ``pure state"---has a barycenter at radius $\sqrt{\ubq N}$, and the uniform distribution over $C_a$ concentrates in the sense that the inner product between two independently and uniformly chosen points from $C_a$ normalized by $N$ converges to a deterministic limit equal to $\ubq$. The vector $\bsigma$ (or $\hat{\bsigma}$ as above) output by the algorithm is expected to be (approximately) the barycenter of one pure state $C_a$. In light of this, the problem of producing a genuine solution lying on the sphere of radius $\sqrt{N}$ seems to reduce to \emph{sampling} a point from the pure state $C_a$ whose barycenter is computed by our algorithm, and this seems to require new ideas.

 Our approach is based on an iterative scheme which leverages the structure of the Parisi variational principle in order to construct a feasible solution. This scheme belongs to the larger family of approximate message passing ($\AMP$) algorithms, which notably includes Bolthausen's iterative construction of the $\TAP$ equation for the $\SK$ model~\cite{bolthausen2014iterative}. This class of algorithms has been intensively studied and systematically enlarged by numerous authors; see~\cite{BM-MPCS-2011,javanmard2013state,bayati2015universality, berthier2019state} and references therein.  A recent innovation in this class came in the form of \emph{incremental} $\AMP$ ($\IAMP$) which attempts to exploit the hierarchical organization of states in spin glass models having full replica symmetry breaking~\cite{mon18,ams20}. This scheme was in turn inspired by an algorithm of Subag~\cite{subag2018following} for optimizing spherical mixed $p$-spin models. 
 For the purpose of exhibiting a solution at large radius, our algorithm has three stages. The first stage consists applying a `vanilla' (replica-symmetric) $\AMP$ iteration, similar to Bolhausen's construction~\cite{bolthausen2014iterative}. The second stage is an implementation of $\IAMP$ exploiting the assumed $\FRSB$ structure of the problem in the set $\Lambda$, and the third stage is a rounding step based on convex optimization. We refer to Section~\ref{sec:alg} for a full description and analysis of the algorithm.  

Let us further mention that $\AMP$ was also recently used in the work of Ding and Sun~\cite{ding2019capacity} for proving a lower bound on the capacity of the Ising perceptron model, where one searches for a solution on the binary hypercube $\bsigma \in \{\pm 1\}^N$ instead of the sphere. Heuristic work for the Ising perceptron, analogous to the work of Gardner on the sphere, was performed by Krauth and M\'ezard~\cite{krauth1989storage}, and leads to a different prediction.
Ding and Sun's approach is based on a conditional second moment method, where the $\AMP$ iteration is used to provide an appropriate  $\sigma$-field on which to condition the moment computations, and hence match the Krauth-M\'ezard prediction. The flavor of $\AMP$ used in their work is akin to the first stage of our algorithm.  
It is an interesting question to design efficient algorithms producing solutions in the case of the Ising perceptron, (or conversely, to use our iteration combined with the Ding-Sun approach to prove a capacity lower bound for the spherical case.) We refer to~\cite{aubin2019storage} for an interesting discussion of this algorithmic question and its link to the so-called ``frozen $1\RSB$" structure of $\rmS_{M,N}(\bG)$.

\section{The Parisi variational formula}
 \label{sec:Parisi_var}
 We now formally define the Parisi variational principle, provide its physical interpretation, and define the no overlap gap assumption.
 
 We define the space of functional  order parameters (f.o.p.) $\cuU$ to be the collection of all non-decreasing right-continuous functions $\gamma: [0,1] \to [0,1]$ such that $\gamma(q) = 1$ for some $q<1$.
We define the left- and right-end of the support as 
\[\lbq(\gamma) = \inf\{t: \gamma(t)>0\},~~~ \mbox{and}~~~\ubq(\gamma) = \sup\{t: \gamma(t)<1\}.\]
We can identify such $\gamma$ with cumulative functions of probability measures on $[0,1]$ whose support ends strictly before $1$. 
For $\gamma \in \cuU$, we consider the Parisi $\PDE$
\begin{align}\label{eq:Parisi_pde}
\begin{split}
\partial_t \Phi_{\gamma}(t,x)  +\frac{1}{2}\Big(\gamma(t) (\partial_x \Phi_\gamma(t,x))^2 +  \partial^2_{x}\Phi_\gamma(t,x)\Big) &=0~~\mbox{for all}~ (t,x) \in [0,1)\times \R\\
\Phi_{\gamma}(1,x) &= 
\begin{cases}
0 & \mbox{if } x \ge \kappa ,\\
-\infty & \mbox{otherwise.}  
\end{cases}
\end{split}
\end{align}
Adopting the convention $e^{-\infty}=0$, the condition $\ubq(\gamma)<1$  ensures that the above $\PDE$ has a well defined solution on $[\ubq(\gamma),1)$. Indeed since $\gamma(t)=1$ for $t \in [\ubq(\gamma),1]$, we can use the Cole-Hopf transform to write
 \begin{equation}\label{eq:boundary}
\Phi_{\gamma}(t,x) =  \log \P\big(x + \sqrt{1-t} Z \ge \kappa\big) = \log \mN\left(\frac{\kappa - x}{\sqrt{1-t}}\right),
\end{equation}
for $(t,x) \in [\ubq(\gamma),1) \times \R$, where $Z \sim N(0,1)$ and $\mN(x) = \P(Z \ge x)$. 
In Section~\ref{sec:parisi_pde} we extend this solution to the interval $[0,\ubq(\gamma))$ through a limiting procedure and prove its regularity.   

We define the Parisi functional for the spherical perceptron as
\begin{equation}\label{eq:parisi_functional}
\Par(\gamma) = \alpha\Phi_{\gamma}(0,0) + \frac{1}{2} \int_0^{\ubq} \frac{\rmd q}{\lambda(q)} + \frac{1}{2} \log(1-\ubq),
\end{equation}
where 
\[\lambda(q) =  \int_q^{1} \gamma(t) \rmd t,\] 
and $\ubq = \ubq(\gamma)$. (Note that $\ubq$ can be replaced by any $\bar{\ubq} \ge \ubq$ in formula~\eqref{eq:parisi_functional} with no effect on its value.) 
We denote the infimal value of the Parisi functional on $\cuU$ as 
\begin{equation}\label{eq:parisi_formula}
\Par_{\star}(\alpha,\kappa) = \inf_{\gamma \in \cuU} \Par(\gamma).
\end{equation}
We say that the Parisi formula is \emph{replica-symmetric} when the optimizers of $\Par$ are of the form  $\gamma(t) = \one\{t \ge q\}$ for some $q \in [0,1)$. In this case it follows from formulas~\eqref{eq:boundary} and~\eqref{eq:parisi_functional} that $\Par_{\star}(\alpha,\kappa)$ coincides with 
 \begin{equation}\label{eq:gardner}
\RS(\alpha,\kappa) \equiv \inf_{q \in [0,1)} \left\{\alpha \E \log \mN\left(\frac{\kappa - \sqrt{q}Z}{\sqrt{1-q}}\right) + \frac{1}{2} \frac{q}{1-q} + \frac{1}{2}\log(1-q) \right\},
 \end{equation}
 where $Z \sim N(0,1)$.
 Furthermore, the above infimum is achieved at some $q<1$ if and only if $\alpha < \alpha_{\sRS}(\kappa) $ where   
 \begin{equation}\label{eq:alpha_RS}
\alpha_{\sRS}(\kappa) := \E\big[(\kappa - Z)_+^2\big]^{-1}.
\end{equation}  
(Otherwise the infimum is $-\infty$.) 

Formulas~\eqref{eq:gardner} and~\eqref{eq:alpha_RS} are respectively Gardner's formula and threshold mentioned in the introduction: letting $\mu_N$ be the surface measure on the sphere $\sphere$ normalized to have total mass 1, we have for $\kappa \ge0$ and $\alpha  \ge 0$,
\begin{align*} 
\mathop{\lim_{N \to \infty}}_{M/N \to \alpha} \frac{1}{N} \log \mu_{N}\big( S_{M,N}(\bG) \big) = \RS(\alpha,\kappa)\quad
\mbox{and} \quad \lim_{N \to \infty} \alpha_{N} = \alpha_{\sRS}(\kappa),
\end{align*}
 in probability~\cite{shcherbina2003rigorous,talagrand2011mean2,stojnic2013another}. It is expected that the Parisi formula is indeed replica-symmetric for $\kappa\ge 0$, and that the infimal value in Eq.~\eqref{eq:parisi_formula} generalizes Gardner's formula:   
 \begin{conjecture}[Essentially in~\cite{franz2017universality}] For all $(\kappa,\alpha) \in \R \times \R_+$,
\begin{align*}
\mathop{\lim_{N \to \infty}}_{M/N \to \alpha} \frac{1}{N} \log \mu_{N}\big( S_{M,N}(\bG) \big) =  \Par_\star(\alpha,\kappa) \,, \quad  
\mbox{and}\quad  \lim_{N \to \infty} \alpha_{N} = \alpha_{\sSAT}(\kappa), 
\end{align*}
where 
\[ \alpha_{\sSAT}(\kappa) := \sup\big\{ \alpha \ge 0 \, :\,  \inf_{\gamma \in \cuU} \Par(\gamma)~\mbox{is achieved} \big\} =  \sup\left\{\alpha \ge 0 ~:~ \Par_{\star}(\alpha,\kappa) > -\infty \right\},\]
and the limits hold in probability.
\end{conjecture}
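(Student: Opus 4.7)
The plan is to split the conjecture into (i) the free energy limit $N^{-1}\log\mu_N(S_{M,N}(\bG))\to \Par_\star(\alpha,\kappa)$ and (ii) the capacity identity $\alpha_N\to\alpha_{\sSAT}(\kappa)$. The bulk of the work concerns (i), which I would prove by matching an upper and a lower bound following the classical Parisi-formula program, but substantially adapted to handle the indicator constraints defining $S_{M,N}(\bG)$.

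For the upper bound I would first smooth the hard constraint. Fix $\beta>0$ and a smooth, concave, non-decreasing approximation $u_\beta$ of $\beta\log\indi\{x\ge 0\}$, and consider the finite-temperature free energy
\begin{equation*}
F_N^\beta(\bG) \;=\; \frac{1}{N}\log \int_{\sphere}\exp\Big(\sum_{a=1}^M u_\beta\big(\langle \bg_a,\bsigma\rangle/\sqrt{N}-\kappa\big)\Big)\,\rmd\mu_N(\bsigma),
\end{equation*}
which decreases monotonically to $N^{-1}\log\mu_N(S_{M,N}(\bG))$ as $\beta\to\infty$. For each piecewise-constant $\gamma\in\cuU$ I would carry out a Guerra--Talagrand interpolation between $\bG$ and a hierarchically decoupled cavity Gaussian field indexed by the leaves of a Ruelle probability cascade matching $\gamma$. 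Differentiating the interpolated expected free energy in the interpolation parameter and invoking Gaussian integration by parts should yield a non-positive remainder (the perceptron analog of Guerra's square term), producing $\E F_N^\beta \le \Par^\beta(\gamma)+o_N(1)$, where $\Par^\beta$ is the Parisi functional with boundary condition $\Phi^\beta(1,x)=u_\beta(x-\kappa)$ replacing~\eqref{eq:boundary}. Extending $\gamma$ by density, infimizing, sending $\beta\to\infty$, and using Gaussian concentration of $F_N^\beta$ around its mean promotes this to $N^{-1}\log\mu_N(S_{M,N}(\bG))\le \Par_\star(\alpha,\kappa)+o_N(1)$ in probability.

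For the lower bound I would follow Panchenko's program. Perturb the soft Hamiltonian by a vanishing mixed-$p$-spin term so that the associated Gibbs measure obeys the Ghirlanda--Guerra identities; Panchenko's ultrametricity and synchronization theorems should then apply to the limiting overlap array. An Aizenman--Sims--Starr cavity computation—comparing the free energies at scales $N$ and $N+1$—gives an increment that under synchronization coincides with the directional derivative of the Parisi functional along a minimizing $\gamma_\star$, yielding $\liminf_N \E F_N^\beta \ge \Par^\beta(\gamma_\star)$, and $\beta\to\infty$ matches the upper bound. The main obstacle sits here: the perceptron Gibbs measure with soft constraint $u_\beta$ is not of mixed-$p$-spin form, so the Ghirlanda--Guerra identities and Panchenko's synchronization machinery cannot be invoked verbatim and must be reworked for indicator-type constraints; moreover the exchange of limits $\beta\to\infty$ and $N\to\infty$ has to be uniform in $\gamma\in\cuU$, which is especially delicate near and above $\alpha_{\sSAT}(\kappa)$ where $\Par_\star=-\infty$ and the cascade structure degenerates. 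This is precisely the gap that keeps the spherical perceptron Parisi formula conjectural for $\kappa<0$.

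For the capacity identity (ii), the lower bound $\liminf_N \alpha_N\ge\alpha_{\sSAT}(\kappa)$ follows from (i): for $\alpha<\alpha_{\sSAT}(\kappa)$ one has $\Par_\star\in\R$, so $\mu_N(S_{M,N}(\bG))>0$ with high probability and hence $S_{M,N}(\bG)\neq\varnothing$. The matching upper bound does \emph{not} follow from (i) alone, since the divergence $N^{-1}\log\mu_N\to -\infty$ is \textit{a priori} consistent with $\mu_N>0$. I would therefore apply a Guerra-type interpolation directly to the zero-temperature quantity $\E\log\big(\mu_N(S_{M,N}(\bG))+\eps\big)$ along the Parisi profile $\gamma_\star$, then pass $\eps\to 0$, combined with Tsirelson--Borell concentration of the Lipschitz-in-$\bG$ functional $\alpha_N$ around its median and monotonicity of $M\mapsto\mu_N(S_{M,N}(\bG))$, to conclude that $\alpha_N/N$ concentrates on a deterministic limit which the interpolation identifies with $\alpha_{\sSAT}(\kappa)$.
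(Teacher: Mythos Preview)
The statement you are attempting to prove is labeled a \emph{Conjecture} in the paper and carries no proof there; the authors explicitly cite it as ``essentially in~\cite{franz2017universality}'' and treat it as an open problem in mathematical physics. There is therefore no ``paper's own proof'' to compare your proposal against. The paper's main theorem is \emph{conditional} on a related structural assumption (the no overlap gap condition, Definition~\ref{def:nogap}), and the authors state plainly that they do not prove the set $\Lambda$ is non-empty.

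Your proposal is a reasonable high-level sketch of the Guerra--Talagrand--Panchenko program transplanted to the perceptron setting, and you yourself identify the genuine obstruction: the Ghirlanda--Guerra/synchronization machinery has not been established for indicator-type constraints, and the exchange of the $\beta\to\infty$ and $N\to\infty$ limits is uncontrolled near and above $\alpha_{\sSAT}$. Those are not technicalities to be filled in later---they are precisely why the statement remains a conjecture. Two further issues: the Guerra interpolation upper bound you describe is not known to produce a non-positive remainder for perceptron-type models (the ``square term'' sign argument relies on structure specific to $p$-spin Hamiltonians and does not obviously survive the softened indicator $u_\beta$), and your argument for the capacity upper bound in (ii) via ``Tsirelson--Borell concentration of the Lipschitz-in-$\bG$ functional $\alpha_N$'' is unsupported, since $\alpha_N$ is an integer-valued hitting index and not a Lipschitz function of $\bG$. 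In short, your outline is a plausible roadmap, but it does not close the gap, and the paper makes no claim to do so either.
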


\begin{remark}
We note that $\alpha_{\sSAT}(\kappa) \le \alpha_{\sRS}(\kappa)$ for all $\kappa \in \R$. Indeed, it is not difficult to check that $\Par_{\star}(\alpha,\kappa) = -\infty$ for $\alpha > \alpha_{\sRS}(\kappa)$ by plugging a step function $\gamma(t) = \one\{t \ge q\}$ in $\Par$ and letting $q \to 1^-$. (This can be seen from Gardner's formula Eq.~\eqref{eq:gardner}.) 
\end{remark}

Next we define the no overlap gap condition.
\begin{definition}\label{def:nogap}
We say that the spherical perceptron model at parameters $(\kappa,\alpha)$ has no overlap gap if there exists $\gamma_* \in \cuU$ such that $\lbq(\gamma_*) < \ubq(\gamma_*)$ and $\gamma_*$ is strictly increasing over $[\lbq(\gamma_*),\ubq(\gamma_*)]$, and $\Par(\gamma_*) = \Par_{\star}(\alpha,\kappa)$. Furthermore, we let 
\begin{align}\label{def:Gamma}
\Lambda := \left\{(\kappa,\alpha) \in \R_{-} \times \R_+ ~ \mbox{s.t.\ the model has no overlap gap at}~(\kappa,\alpha)\right\}.
\end{align} 
\end{definition}

As mentioned earlier, no overlap gap is expected to hold in a non-trivial sector of the form $\tilde{\Lambda} = \{(\kappa,\alpha) \in \R_- \times \R_+ : \alpha_{\sG}(\kappa) < \alpha < \alpha_{\sSAT}(\kappa)\}$, and furthermore, $\ubq(\gamma_*) \to 1$ as $\alpha \to \alpha_{\sSAT}(\kappa)$ from below, so our algorithm would produce a vector of norm almost $\sqrt{N}$ when $\alpha$ is close to $\alpha_{\sSAT}$.

The assumption of no overlap gap is known in the physics literature as \emph{continuous} or \emph{full replica-symmetry breaking} ($\FRSB$). Its physical interpretation is as follows: If $\bsigma_1$ and $\bsigma_2$ are two independent and uniform samples from the set of solutions $\rmS_{M,N}(\bG)$ then their normalized overlap $N^{-1}\langle \bsigma_1,\bsigma_2\rangle$ is expected to converge to a random variable with c.d.f.\ $\gamma_*$ as $N \to \infty$, $M/N \to \alpha$. Hence in the replica-symmetric case where  $\gamma_*(t) = \one\{t \ge q_*\}$, $N^{-1}\langle \bsigma_1,\bsigma_2\rangle \to q_*$ in distribution. The condition that $\gamma_*$ is strictly increasing means that the asymptotic support of the overlap distribution does not have a gap. 
This condition has recently emerged as a sufficient condition for the algorithmic tractability of optimizing mean-field spin glass Hamiltonians~\cite{subag2018following,mon18,ams20}  up to an arbitrarily small relative error. For instance, an appropriate version of this condition is widely believed to hold for $\SK$ in the entire low temperature phase, and this belief is supported by numerical experiments~\cite{crisanti2002analysis,schmidt2008method,alaoui2020algorithmic}.

We finish this section by defining an important stochastic process $(X_t)_{t \in [0,1]}$ and stating two main identities implied by the optimality of the minimizer $\gamma_* \in \cuU$ in Definition~\ref{def:nogap}.  
It will be useful throughout to introduce the SDE
\begin{equation}\label{eq:sde}
\rmd X_t  = \gamma_*(t) \partial_x \Phi_{\gamma_*}(t,X_t) \rmd t + \rmd B_t,~~~ X_0 = 0,
\end{equation}
where $(B_t)_{t \in [0,1]}$ is a standard Brownian motion. Existence and uniqueness of a strong solution is established in Section~\ref{sec:analysis_var}. As we will see in Section~\ref{sec:analysis_var}, Proposition~\ref{prop:optimality_gamma}, the optimality of $\gamma_*$ as a minimizer of $\Par$ implies 
\begin{equation}\label{eq:stationary}
\E\big[\partial_x \Phi_{\gamma_*}(t,X_t)^2\big] = \frac{1}{\alpha} \int_0^t \frac{\rmd q}{\lambda(q)^2}
~~~\mbox{and}~~~\alpha\lambda(t)^2\E\big[\partial_x^2 \Phi_{\gamma_*}(t,X_t)^2\big] = 1,
\end{equation}
for all $t \in [\lbq(\gamma_*),\ubq(\gamma_*)]$. Furthermore, letting $t \to \lbq = \lbq(\gamma_*)$ in the first identity, we obtain 
\begin{equation}\label{eq:stationary_q0}
\E\big[\partial_x \Phi_{\gamma_*}(\lbq,X_{\lbq})^2\big] = \frac{\lbq}{\alpha \lambda(\lbq)^2}.
\end{equation}
The above identities will be crucial to the analysis of the algorithm.

\section{The algorithm}
\label{sec:alg}
Our algorithm has three stages. The first two rely on two different flavors of approximate message passing, and the last stage is a convex optimization-based rounding step. 

Let us first describe the second, incremetal step. Our approach will be to construct a sequence of iterates $\bu_{t} \in \R^{N}$ where $t \in \mathbb{T}_{\delta} := \{k\delta: k \ge 0\} \cap [0,1]$ for which we can track the empirical distribution of the entries of $\bu_{t}$ and of the matrix-vector product $\frac{1}{\sqrt{N}}\bG \bu_{t}$.  Here, $\delta>0$ has the interpretation of a time discretization step which will eventually tend to $0$. 
The desired outcome is to simultaneously obtain
\begin{align}\label{eq:result}
 \frac{1}{N}\big\| \u_{\ubq}  \big\|_2^2 = {\ubq}+o_{N,\delta}(1)~~~\mbox{and}~~~
\frac{1}{ \sqrt{M}} \Big\|\Big(\kappa \one - \frac{1}{\sqrt{N}}\bG\u_{\ubq} \Big)_+\Big\|_2 = o_{N,\delta}(1),
\end{align}
where $\ubq = \ubq(\gamma_*)$. (See proposition~\ref{prop:se_combined} for the precise statement.)
We use the technology of $\AMP$ to achieve this. We will see that in the limit $N\to \infty$ and $M/N \to \alpha$ then $\delta \to 0$, the algorithm has a scaling limit where the sequence of vectors $(\bu_t)_{t \in \mathbb{T}_{\delta}}$ converges in a certain sense to a Brownian motion $(\tilde{B}_t)_{t \in [0,1]}$, and for any pseudo-Lipschitz test function $\psi : \R \to \R$, we have
\[\frac{1}{M} \sum_{a=1}^M \psi\big( \langle \bg_a , \u_{t}\rangle /\sqrt{N}\big) \longrightarrow \E\psi\big(\E[X_{1} | \mF_{\ubq}]\big),\]       
where $(X_t)$ is given by the SDE~\eqref{eq:sde}, driven by a Brownian motion $(B_t)_{t \in [0,1]}$ independent of $(\tilde{B}_t)_{t \in [0,1]}$. 
This passage to the scaling limit $\delta \to 0$ crucially relies on the fact that the identities displayed in Eq.~\eqref{eq:stationary} are valid on the full interval $[\lbq,\ubq]$; these identities in turn follow from the assumed no-overlap gap property.
We utilize the properties of the Parisi functional (in particular the terminal condition of the Parisi $\PDE$) to show that $\E[X_1 | \mF_{q}] \ge \kappa$ almost surely whenever the infimum $\inf_{\gamma \in \cuU}\Par(\gamma)$ is achieved, thereby proving~\eqref{eq:result}. 
Once $\bu_{\ubq}$ is computed, we obtain a vector $\hat{\bsigma}$ satisfying~\eqref{eq:round1} or~\eqref{eq:round2} by orthogonal projections.  

Due to the one-sided nature of the perceptron constraints, the sequence $\bu_{t}$ has to be initialized in a non-trivial way at a point $\bu_{\lbq}$ lying approximately on the sphere of radius $\sqrt{\lbq N}$. (Recall that the identities~\eqref{eq:stationary} do not hold on the intervals $[0,\lbq)$ and $(\ubq,1]$.) This will be taken care of with an initial stage where a simpler version of the $\AMP$ algorithm is used. We note that for symmetric versions of the perceptron problem where the inequalities are two-sided, e.g., $|\langle \bg_a , \bsigma \rangle|  \le \kappa \sqrt{N}$, it is the case that $\lbq=0$ and this first stage is not needed as the one can initialize the incremental stage at the origin. 
Alternatively, whenever the Parisi formula is replica-symmetric, i.e., it is minimized at a step function, then the incremental stage is not needed, and one use the simple version of $\AMP$ together with the rounding step.

\subsection{The message passing iteration} 
We consider the general class of Approximate Message Passing ($\AMP$) iterations. 
Let $f_{\ell}, g_{\ell}  : \R^{\ell+1} \to \R$ for all $\ell \ge 0$ be two sequences of real-valued differentiable functions. 
 For a sequence of vectors $\x^0,\cdots,\x^\ell \in \R^d$ we use the notation $f_{\ell}(\x^0,\cdots,\x^\ell)$ for the vector $(f_{\ell}(x_i^0,\cdots,x_i^\ell))_{1\le i \le d}$, i.e., $f_{\ell}$ is applied entrywise. Let $\bA = \frac{1}{\sqrt{N}} \bG$.
The general iteration takes the form 
\begin{align}\label{eq:general_amp}
\begin{split}
\u^{\ell+1} &= \bA^\top f_{\ell}(\v^0,\cdots,\v^\ell)  - \sum_{s=0}^\ell \mathrm{b}_{\ell, s} g_{s}(\u^0,\cdots,\u^{s}),\\  
\v^{\ell} &= \bA \,\, g_{\ell}(\u^0,\cdots,\u^\ell)  - \sum_{s=0}^{\ell} \mathrm{d}_{\ell, s} f_{s-1}(\v^0,\cdots,\v^{s-1}),\\ 
\end{split}
\end{align}  
where
\begin{equation*}
\mathrm{b}_{\ell,s} := \frac{1}{N} \sum_{k=1}^M \frac{\partial f_{\ell}}{\partial v^s}(v_k^0,\cdots,v_k^\ell), \qquad
\mathrm{d}_{\ell,s} := \frac{1}{N} \sum_{i=1}^N \frac{\partial g_{\ell}}{\partial u^s}(u_i^0,\cdots,u_i^\ell).
\end{equation*} 
This iteration is initialized with a vector $\u^0\in\R^N$ with coordinates drawn  i.i.d.\ from a probability distribution $p_0$, independently of everything else.   
The joint distribution for the first $\ell$ iterates of Eq.~\eqref{eq:general_amp} can be exactly characterized in the $N\to \infty$ limit. For $\x \in \R^d$, we let $\langle \x\rangle_d := \frac{1}{d}\sum_{i=1}^d x_i$.

\begin{proposition}[State evolution]\label{prop:state_evolution}  
Assume that $p_0$ has a finite second moment and let $\psi : \R^{\ell+1} \to \R$ be a pseudo-Lipschitz function. Then
\begin{align*}
\Big\langle \psi\big(\u^0,\cdots,\u^\ell\big)\Big\rangle_N &~\xrightarrow[N\to \infty]{p} ~\E\psi\big(U^0,\cdots,U^\ell\big),\\
\Big\langle \psi\big(\v^0,\cdots,\v^\ell\big)\Big\rangle_M &~\xrightarrow[N\to \infty]{p} ~\E\psi\big(V^0,\cdots,V^\ell\big),
\end{align*}  
as $N \to \infty, M/N \to \alpha$.
Here, $U_0 \sim p_0$, $(U^1,\cdots,U^\ell)$ and $(V^0,\cdots,V^\ell)$ are centered Gaussian vectors, independent of $U^0$ and of each other. Their covariances are defined recursively by 
\begin{align}\label{eq:cov_amp}
\begin{split}
\E\big[U^{\ell+1}U^{j+1}\big] &= \alpha \E\big[f_{\ell}(V^0,\cdots,V^\ell) f_{j}(V^0,\cdots,V^j)\big],\\
\E\big[V^{\ell}V^{j}\big] &=  ~~\E\big[g_{\ell}(U^0,\cdots,U^\ell) g_{j}(U^0,\cdots,U^j)\big],
\end{split}
~~ \mbox{for}~~\ell,j \ge 0.
\end{align}
\end{proposition}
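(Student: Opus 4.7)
The plan is to deduce this from the standard state-evolution theorem for rectangular AMP with separable memory-nonlinearities, as established by Bayati and Montanari~\cite{BM-MPCS-2011} and extended in~\cite{javanmard2013state, bayati2015universality, berthier2019state}. The iteration~\eqref{eq:general_amp} is already written in the canonical form used there, with Onsager coefficients $\mathrm{b}_{\ell,s}$ and $\mathrm{d}_{\ell,s}$ equal to the normalized divergences of $f_\ell$ and $g_\ell$ along their past arguments, so the proof essentially amounts to checking that the iteration fits that framework (implicitly under Lipschitz regularity of $f_\ell, g_\ell$, which one should state as a standing hypothesis) and that the recursion one extracts matches Eq.~\eqref{eq:cov_amp}.

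First I would set up Bolthausen's conditioning argument and proceed by induction on $\ell$. Let $\mathcal{S}_\ell$ be the $\sigma$-algebra generated by all iterates $\u^0, \dots, \u^\ell, \v^0, \dots, \v^{\ell-1}$ computed so far, together with the linear data about $\bA=\bG/\sqrt{N}$ that has been revealed in computing them. By the Gaussian conditional law, given $\mathcal{S}_\ell$ the new probe $\bA g_\ell(\u^0,\dots,\u^\ell)$ decomposes as an explicit linear combination of the previously revealed directions $f_s(\v^0,\dots,\v^s)$ for $s<\ell$, with coefficients depending on the prior inner products, plus an independent Gaussian piece whose covariance matches that of a fresh Gaussian matrix applied to $g_\ell(\u^0,\dots,\u^\ell)$ restricted to the orthogonal complement of the revealed span. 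A Stein-type calculation shows that these ``memory'' coefficients concentrate as $N\to\infty$ on the divergences $\mathrm{d}_{\ell,s}$, so that after subtracting the Onsager term the residual $\v^\ell$ is conditionally asymptotically Gaussian with the covariance prescribed by~\eqref{eq:cov_amp}. The symmetric step producing $\u^{\ell+1}$ from $\v^\ell$ works identically with $\mathrm{b}_{\ell,s}$ in place of $\mathrm{d}_{\ell,s}$. The induction closes by a law-of-large-numbers step: the empirical distribution of coordinates of the iterates converges to the prescribed jointly Gaussian law, and since $\psi$ is pseudo-Lipschitz, the averages $\langle\psi(\u^0,\dots,\u^\ell)\rangle_N$ and $\langle\psi(\v^0,\dots,\v^\ell)\rangle_M$ concentrate in probability on the stated expectations. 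The covariance recursion~\eqref{eq:cov_amp} itself is obtained by choosing $\psi$ to be the appropriate product of two $f$'s or two $g$'s within the induction hypothesis.

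The main obstacle is the bookkeeping in the conditioning step: one must verify that the revealed linear combinations appearing in the conditional law of $\bA$ are cancelled by the Onsager terms with an error that is $o_N(1)$ in probability, and that the orthogonal-complement contribution delivers a Gaussian with the claimed asymptotic covariance rather than one degraded by the revealed-span projection. Both require propagating uniform moment bounds for $(\u^0,\dots,\u^\ell)$ and $(\v^0,\dots,\v^\ell)$ along the iteration, which in turn follow from the pseudo-Lipschitz assumption on $f_\ell, g_\ell$ together with the finite-second-moment assumption on $p_0$. Since all the necessary machinery is already in the cited references, the actual verification reduces to a careful but routine translation of the iteration~\eqref{eq:general_amp} into their canonical form.
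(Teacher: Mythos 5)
Your approach is correct and matches the paper's, which simply observes that Proposition~\ref{prop:state_evolution} is a special case of the state evolution theorem in \cite[Proposition~5]{javanmard2013state}, with the reduction of the general iteration~\eqref{eq:general_amp} to that framework spelled out in \cite[Appendix~B]{celentano2020estimation}. The paper does not reproduce the Bolthausen conditioning argument you sketch; it treats the verification of the covariance recursion~\eqref{eq:cov_amp} as already covered by those references, exactly as you suggest in your opening paragraph.
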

\begin{proof}
This result is a special case of~\cite[Proposition 5]{javanmard2013state}, and the reduction in explained in detail in~\cite[Appendix B]{celentano2020estimation}.    
\end{proof}

\subsection{Stage I: Finding the root of the tree}
The first stage of the algorithm is a simple instance of the above iteration where both $f_\ell$ depend only on the last iterates: 
$g_{\ell}(u^0,\cdots,u^\ell) = u^\ell$ and $f_{\ell}(v^0,\cdots,v^\ell) = f(v^\ell)$.
The iteration becomes
\begin{align}\label{eq:rs_amp}
\begin{split}
\u^{\ell+1} &= \bA^\top f(\v^\ell)  - \mathrm{b}_{\ell} \u^{\ell},\\  
\v^{\ell} &= \bA \,\, \u^\ell  -  f(\v^{\ell-1}), 
\end{split}
~~~~\ell \ge 0.
\end{align}    
The map $f$ and the coefficient $ \mathrm{b}_{\ell}$ are defined as 
\begin{equation}\label{eq:nonlin_rs}
f(x) = \lambda(\lbq) \partial_{x}\Phi_{\gamma_*}(\lbq, x), \qquad \mbox{and} \qquad
 \mathrm{b}_{\ell} =  \frac{1}{N} \sum_{k=1}^M  f'(v_k^\ell).
 \end{equation}
We initialize the iteration with $\u^0 =\sqrt{\frac{\lbq}{N}}\one$ and $\v^{0}=\bA\u^{0}$.  
Iteration~\eqref{eq:rs_amp} is the perceptron analogue of Bolthausen's iterative construction of solutions to the $\TAP$ equation for the $\SK$ model~\cite{bolthausen2014iterative}. We refer to it as \emph{replica-symmetric} approximate message passing, or $\RSAMP$.
 
\begin{remark}
We observe that if $f(0)=0$, then the pair $({\bm 0},{\bm 0})$ is an obvious fixed point of the above iteration. While this is the case for the Sherrington--Kirkpatrick model and mixed $p$-spin models with no external field (and $\lbq=0$), hence the absence of this first stage in the algorithms of~\cite{mon18} and~\cite{ams20}, this is not true for the one-sided perceptron problem. It is in fact easy to show that $\partial_x \Phi_{\gamma_*}(t,x)>0$ for all $(t,x) \in [0,1)\times \R$; see Section~\ref{sec:parisi_pde}, which implies $\lbq>0$ by virtue of the identity Eq.~\eqref{eq:stationary_q0}. 
\end{remark}
 In the remaining of this section we record for future use the asymptotic covariance structure of the iterates of $\RSAMP$. Let us first note that Eq.~\eqref{eq:stationary_q0} translates into the fixed-point equation
 \begin{equation}\label{eq:fixed_q0}
 \alpha\E\big[f(\sqrt{\lbq}Z)^2\big] = \lbq,
 \end{equation}
 where $Z \sim N(0,1)$.
Next, we define the sequence
\begin{equation}\label{eq:a_k}
a_k=\psi(a_{k-1}),~~~~~~~~ a_0 = 0,
\end{equation}
where
\[\psi(t) :=\alpha \E\big[f\big(\sqrt{t}Z+\sqrt{\lbq-t}Z'\big) f\big(\sqrt{t}Z+\sqrt{\lbq-t}Z''\big)\big],\]  
and $Z,Z',Z''$ are i.i.d.\ standard Gaussians.  
\begin{lemma}\label{lem:se_rs}
Let $(U^j)_{j=0}^\ell$ and $(V^j)_{j=0}^\ell$ be the random vectors toward which $(\u^1,\cdots, \u^{\ell})$ and $(\v^1,\cdots,\v^\ell)$ converge respectively, in the sense of Proposition~\ref{prop:state_evolution}. Then the following holds.
\begin{itemize}
\item For all $\ell \ge 0$,
\[\E\big[(U^\ell)^2\big] = \E\big[(V^\ell)^2\big] = \lbq.\]
\item For all $ 0 \le k < \ell$,
\[\E\big[U^\ell U^{k}\big] = \E\big[V^{\ell} V^{k}\big] = a_{k}.\]
\end{itemize}
\end{lemma}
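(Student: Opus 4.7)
The plan is to invoke the general state evolution of Proposition~\ref{prop:state_evolution} specialized to the RS-AMP nonlinearities from~\eqref{eq:rs_amp}, namely $g_\ell(u^0,\ldots,u^\ell)=u^\ell$ and $f_\ell(v^0,\ldots,v^\ell)=f(v^\ell)$. With these choices the covariance recursion~\eqref{eq:cov_amp} collapses to
\[
\E\big[U^{\ell+1}U^{k+1}\big] \;=\; \alpha\,\E\big[f(V^\ell)f(V^k)\big],\qquad \E\big[V^\ell V^k\big] \;=\; \E\big[U^\ell U^k\big].
\]
The deterministic initialization $\u^0=\sqrt{\lbq/N}\,\one$ corresponds to a constant $U^0=\sqrt{\lbq}$ in the state-evolution limit, so $\E[(U^0)^2]=\lbq$, and since $\v^0=\bA\u^0$ has entries Gaussian with variance $\|\u^0\|^2/N \to \lbq$, also $\E[(V^0)^2]=\lbq$. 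From here I would prove both identities jointly by induction on $\ell$.

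For the diagonal, the inductive hypothesis $\E[(V^\ell)^2]=\lbq$ gives $V^\ell \sim N(0,\lbq)$, so the fixed-point identity~\eqref{eq:fixed_q0} yields $\E[(U^{\ell+1})^2]=\alpha\E[f(\sqrt{\lbq}\,Z)^2]=\lbq$, and hence $\E[(V^{\ell+1})^2]=\lbq$ as well. For the off-diagonal claim $\E[U^\ell U^k]=a_k$ with $k<\ell$, Proposition~\ref{prop:state_evolution} asserts that $(U^1,U^2,\ldots)$ is a centered Gaussian vector independent of $U^0$, so the base case $k=0$ is immediate: $\E[U^\ell U^0]=\sqrt{\lbq}\,\E[U^\ell]=0=a_0$. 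For $k\ge 1$, the induction hypothesis makes $(V^\ell,V^k)$ bivariate centered Gaussian with common variance $\lbq$ and covariance $a_k$; decomposing in distribution
\[
V^\ell=\sqrt{a_k}\,Z+\sqrt{\lbq-a_k}\,Z',\qquad V^k=\sqrt{a_k}\,Z+\sqrt{\lbq-a_k}\,Z''
\]
with $Z,Z',Z''$ iid $N(0,1)$, the definition of $\psi$ surrounding~\eqref{eq:a_k} yields
\[
\E\big[U^{\ell+1}U^{k+1}\big]\;=\;\alpha\E[f(V^\ell)f(V^k)]\;=\;\psi(a_k)\;=\;a_{k+1},
\]
which is the desired recursion after an index shift.

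The main obstacle I anticipate is really only bookkeeping: aligning the index shift between Proposition~\ref{prop:state_evolution} (which relates $U^{\ell+1}$ to $V^\ell$) and the claim (phrased symmetrically in $U$ and $V$ at the same index), and justifying that the non-iid initialization $\u^0$ fits into the state-evolution framework (which it does, either by rotational invariance of $\bG$ or by a direct verification of the Gaussian law of $\v^0$). Beyond that, the proof is just a substitution into the covariance recursion, with the essential analytic content coming entirely from the fixed-point identity~\eqref{eq:fixed_q0} and the defining recursion of $(a_k)$ in~\eqref{eq:a_k}.
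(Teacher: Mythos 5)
Your proposal is correct and follows essentially the same route as the paper: specialize the state-evolution covariance recursion to the RS-AMP nonlinearities, establish the diagonal values $\lbq$ by induction using the fixed-point identity~\eqref{eq:fixed_q0}, and establish the off-diagonal values $a_k$ by induction using the defining recursion of $\psi$ and the base case $\E[V^\ell V^0]=0$ (which the paper also observes, phrasing the result as $\E[V^\ell V^k]=\psi^{(k)}(\E[V^{\ell-k}V^0])=a_k$). The only cosmetic difference is that you spell out the bivariate Gaussian decomposition and note the (standard) point about the deterministic initialization $\u^0$; the paper takes these for granted.
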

\begin{proof}
From Proposition~\ref{prop:state_evolution} we see that $\E\big[(U^\ell)^2\big] = \E\big[(V^\ell)^2\big] = \alpha \E[f(V^{\ell-1})^2]$. Using the fixed point equation~\eqref{eq:fixed_q0} together with the initial condition $\E\big[(U^0)^2\big] = \lbq$, we obtain the first assertion by induction. Similarly, using Proposition~\ref{prop:state_evolution} and the first assertion, we have for $k < \ell$
\[ \E\big[V^{\ell} V^{k}\big] = \psi\big(\E\big[V^{\ell-1} V^{k-1}\big]\big),\]   
and $\E[V^{\ell}V^0] = 0$. Whence by induction, 
\[\E\big[V^{\ell} V^{k}\big] = \psi^{(k)}\big(\E[V^{\ell-k}V^0] \big) = a_{k}.\]
\end{proof}
\begin{lemma}\label{lem:psi}
The function $\psi$ is strictly increasing and strictly convex on $[0,\lbq]$. Moreover $\psi(\lbq)=\lbq$ and $\psi'(\lbq)=1.$ Finally $\psi(t)>t$ for all $t<\lbq$. 
\end{lemma}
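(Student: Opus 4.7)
Write $Y_1 := \sqrt{t}Z+\sqrt{\lbq-t}Z'$ and $Y_2:=\sqrt{t}Z+\sqrt{\lbq-t}Z''$, so that $(Y_1,Y_2)$ is a centered bivariate Gaussian with equal marginal variances $\lbq$ and covariance parameter $t$. Then $\psi(t)=\alpha\,\E[f(Y_1)f(Y_2)]$ is the Price covariance function of $f$ in the parameter $t$, and the plan is to differentiate twice in $t$ using Price's theorem and then verify each of the five claims in turn.

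A Gaussian integration by parts (justified by smoothness and polynomial-growth bounds on $f=\lambda(\lbq)\partial_x\Phi_{\gamma_*}(\lbq,\cdot)$ and its derivatives, guaranteed by the regularity theory of the Parisi PDE developed in Section~\ref{sec:parisi_pde}) gives
\begin{align*}
\psi'(t)&=\alpha\,\E\bigl[f'(Y_1)f'(Y_2)\bigr]=\alpha\,\E\!\left[\bigl(\E[f'(Y_1)\mid Z]\bigr)^{2}\right],\\
\psi''(t)&=\alpha\,\E\bigl[f''(Y_1)f''(Y_2)\bigr]=\alpha\,\E\!\left[\bigl(\E[f''(Y_1)\mid Z]\bigr)^{2}\right],
\end{align*}
where the second equality in each line uses conditional independence of $Y_1$ and $Y_2$ given $Z$ together with their identical conditional laws. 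Both right-hand sides are non-negative. For strict monotonicity, note that $f'=\lambda(\lbq)\partial_x^2\Phi_{\gamma_*}(\lbq,\cdot)<0$ strictly, because the Parisi PDE propagates strict concavity in $x$ backward from the non-degenerate terminal form~\eqref{eq:boundary}; hence $\E[f'(Y_1)\mid Z]$ is a strictly negative function of $Z$ and its square has positive expectation. For strict convexity one needs $f''\not\equiv 0$: otherwise $\Phi_{\gamma_*}(\lbq,\cdot)$ would be quadratic, which on propagating forward via the PDE is incompatible with the Mills-ratio form $\Phi_{\gamma_*}(\ubq,x)=\log\mN\bigl((\kappa-x)/\sqrt{1-\ubq}\bigr)$; with $f''\not\equiv 0$, the Gaussian smoothing $z\mapsto\E[f''(\sqrt{t}z+\sqrt{\lbq-t}Z')]$ is itself not identically zero for $t\in(0,\lbq)$ by injectivity of the heat kernel, so $\psi''(t)>0$ on $(0,\lbq)$ and strict convexity on $[0,\lbq]$ follows by continuity.

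For the values at $t=\lbq$, the identification $Y_1=Y_2=\sqrt{\lbq}Z$ gives $\psi(\lbq)=\alpha\E[f(\sqrt{\lbq}Z)^2]=\lbq$, which is precisely the fixed-point identity~\eqref{eq:fixed_q0}. For the derivative, since $\gamma_*\equiv 0$ on $[0,\lbq)$ the SDE~\eqref{eq:sde} has vanishing drift on $[0,\lbq]$, hence $X_{\lbq}=B_{\lbq}\sim\sqrt{\lbq}Z$ in distribution, and therefore
\[\psi'(\lbq)=\alpha\,\E\bigl[f'(\sqrt{\lbq}Z)^2\bigr]=\alpha\,\lambda(\lbq)^2\,\E\bigl[\partial_x^2\Phi_{\gamma_*}(\lbq,X_{\lbq})^2\bigr]=1,\]
by the second identity in the stationarity relations~\eqref{eq:stationary} specialized at $t=\lbq$.

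Finally, the claim $\psi(t)>t$ for $t<\lbq$ is immediate from strict convexity combined with the tangent line at $\lbq$: $\psi(t)>\psi(\lbq)+\psi'(\lbq)(t-\lbq)=\lbq+(t-\lbq)=t$. The main technical obstacle in this plan is the strict-convexity step, which rests on (i) justifying differentiation under the expectation in Price's theorem for our specific $f$ and (ii) ruling out $f''\equiv 0$; both are consequences of the regularity and non-degeneracy of $\Phi_{\gamma_*}$ developed separately rather than intrinsic difficulties of this lemma.
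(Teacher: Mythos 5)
Your proof follows the same route as the paper: Gaussian integration by parts (Price's theorem) to obtain $\psi'=\alpha\E[f'(Y_1)f'(Y_2)]$ and $\psi''=\alpha\E[f''(Y_1)f''(Y_2)]$, conditional independence given $Z$ to write these as expectations of squares, the fixed-point and stationarity identities~\eqref{eq:fixed_q0} and~\eqref{eq:stationary} for $\psi(\lbq)$ and $\psi'(\lbq)$, and the tangent-line inequality for $\psi(t)>t$. The only difference is that you spell out why the positivity of $\psi'$ and $\psi''$ is strict (strict negativity of $\partial_x^2\Phi_{\gamma_*}(\lbq,\cdot)$ and non-vanishing of $f''$), points the paper leaves implicit; those sketches are plausible and can be made rigorous from the Cole--Hopf representation and the non-quadratic Mills-ratio terminal data, so this is a correct and essentially identical argument.
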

\begin{proof}
Using Gaussian integration by parts, as in~\cite[Lemma 2.2]{bolthausen2014iterative}, we have
\begin{align*}
\psi'(t)=\alpha \E\big[f'\big(\sqrt{t}Z+\sqrt{\lbq-t}Z'\big) f'\big(\sqrt{t}Z+\sqrt{\lbq-t}Z''\big)\big],\\
\psi''(t)=\alpha \E\big[f''\big(\sqrt{t}Z+\sqrt{\lbq-t}Z'\big) f''\big(\sqrt{t}Z+\sqrt{\lbq-t}Z''\big)\big],
\end{align*}
and these expressions are seen to be positive by first fixing $Z$ and integrating in $Z',Z''$. The values of $\psi(\lbq),\psi'(\lbq)$ follow from Eq.~\eqref{eq:stationary}. The last claim follows by convexity of $\psi$ and the value $\psi'(\lbq)=1$ just established.
\end{proof}

\begin{corollary}\label{cor:a_k}
$\lim_{k \to \infty} a_k = \lbq$.
\end{corollary}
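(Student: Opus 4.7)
The plan is a short monotone-convergence argument using only the properties of $\psi$ established in Lemma~\ref{lem:psi}. First I would show by induction that the sequence $(a_k)$ stays in $[0,\lbq]$: we have $a_0=0\in[0,\lbq]$, and since $\psi$ is increasing on $[0,\lbq]$ with $\psi(\lbq)=\lbq$, it maps $[0,\lbq]$ into itself, so $a_{k-1}\in[0,\lbq]$ gives $a_k=\psi(a_{k-1})\in[0,\lbq]$.

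Next I would prove that $(a_k)$ is strictly increasing as long as it has not reached $\lbq$. This is immediate from the last clause of Lemma~\ref{lem:psi}: if $a_{k-1}<\lbq$, then $a_k=\psi(a_{k-1})>a_{k-1}$. Combined with the previous step, $(a_k)$ is a monotone non-decreasing sequence bounded above by $\lbq$, so it converges to some limit $L\in[0,\lbq]$.

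Finally, by continuity of $\psi$ (which follows from its being smooth, as used implicitly in Lemma~\ref{lem:psi}), passing to the limit in $a_k=\psi(a_{k-1})$ gives $L=\psi(L)$. But Lemma~\ref{lem:psi} states $\psi(t)>t$ for all $t<\lbq$, so the only fixed point in $[0,\lbq]$ is $\lbq$ itself, forcing $L=\lbq$.

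There is no real obstacle here; the work has all been done in Lemma~\ref{lem:psi}. The only thing to be slightly careful about is that $\psi$ needs to be continuous at $\lbq$, which follows from the smoothness of the integrand (since $f=\lambda(\lbq)\partial_x\Phi_{\gamma_*}(\lbq,\cdot)$ is smooth and the relevant Gaussian integrals are finite by the second-moment computations already invoked via Eq.~\eqref{eq:fixed_q0}).
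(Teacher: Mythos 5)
Your argument is correct and is essentially the same as the paper's: both use the facts from Lemma~\ref{lem:psi} that $\psi(t)\in(t,\lbq)$ for $t<\lbq$ to get a monotone increasing sequence bounded by $\lbq$, then pass to the limit by continuity to obtain a fixed point, which must be $\lbq$. The only cosmetic difference is that you separate the containment in $[0,\lbq]$ from the strict increase, while the paper states both at once.
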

\begin{proof}
This follows easily from Lemma~\ref{lem:psi}. Indeed as $\psi(t )\in (t,\lbq)$ for $t<\lbq$ it follows that the sequence $(a_k)_{k\ge 0}$ is strictly increasing and converges to a limit $L\leq \lbq$. We have by continuity $L=\psi(L)$ and therefore $L=\lbq$, proving $\lim_{k\to\infty} a_k=\lbq$ as claimed. 
\end{proof}

\subsection{Stage II: Propagating down the tree}
After $\underline{\ell}$ iterations of $\RSAMP$, Eq.~\eqref{eq:rs_amp}, we move to an incremental stage where we employ the general iteration~\eqref{eq:general_amp} with $g_{\ell}(u^0,\cdots,u^\ell) = u^\ell$ and $f_{\ell}$ defined as follows.  
Let $a,b : [0,1] \times \R \to \R$ be two functions defined by 
\begin{equation}\label{eq:ab}
a(t,x) := \lambda(t) \partial_x^2\Phi_{\gamma_*}(t,x), \quad \mbox{and} \quad 
b(t,x) :=  \gamma_*(t) \partial_x\Phi_{\gamma_*}(t,x).
\end{equation}
We let
\begin{equation}\label{eq:delta}
\delta:=\Big(\frac{\lbq^2}{a_{\underline{\ell}}^2} -1\Big)\lbq.
\end{equation}
We note that $\delta \to 0$ as $\underline{\ell} \to \infty$ by Corollary~\ref{cor:a_k}.
Further, for $\ell \ge \underline{\ell}$, we let
\begin{equation}\label{eq:q_ell}
q_\ell := \lbq + (\ell - \underline{\ell})\delta.
\end{equation}
For $k\in [M]$, and given $v_k^0,\cdots,v_k^\ell$ with $\ell \ge \underline{\ell}+1$, we consider the finite difference equation
 \begin{align}\label{eq:discrete_cavity_field}
 \begin{split}
x_k^{j+1} - x_k^{j} &= b(q_j , x_k^{j}) \delta + (v_k^{j+1}-v_k^{j}), \quad \mbox{for} \quad \underline{\ell} \le j \le \ell-1, \\
x_k^{\underline{\ell}} &=v_k^{\underline{\ell}}.
\end{split}
\end{align}
We further let 
\begin{equation}\label{eq:def_a}
a^{\delta}_j (x) := \frac{a(q_j , x)}{\big(\alpha \E[a(q_j , X^{\delta}_j)^2]\big)^{1/2}},
\end{equation}
the random variables $X^{\delta}_{j}$ are defined further below, and
 \begin{align}\label{eq:discrete_cavity_magnetization}
 \begin{split}
  m_k^{\ell} &= m_k^{\underline{\ell}} + \sum_{j=\underline{\ell}}^{\ell-1} a^{\delta}_j (x_k^{j})(v_k^{j+1}-v_k^{j}),\quad \mbox{for} \quad \underline{\ell} +1 \le \ell,\\
m_k^{\underline{\ell}} &= (1+\eps_0)\lambda(\lbq) \partial_x \Phi_{\gamma_*}(\lbq, v^{\underline{\ell}}_k),
 \end{split}
 \end{align}
 where \[\eps_0 = \frac{\lbq}{a_{\underline{\ell}}} - 1.\] 
 (The value of $\eps_0$ is chosen to satisfy Eq.~\eqref{eq:orthog} below.)
We define the function $f_{\ell}$ as the one mapping $v_k^0,\cdots,v_k^\ell$ to $m_k^{\ell}$:
 \[f_{\ell} : (v_k^0,\cdots,v_k^\ell) \longmapsto m_k^{\ell} ~~~\mbox{as per Eq.~\eqref{eq:discrete_cavity_field} and Eq.~\eqref{eq:discrete_cavity_magnetization}}.\]  
Observe that $f_{\ell}$ depends only on $v_k^{\underline{\ell}},\cdots,v_k^{\ell}$.

A similar iteration was employed in the context of optimizing the $\SK$ hamiltonian in \cite{mon18} and its mixed $p$-spin generalizations in \cite{ams20}.  Following the terminology used in these papers, we refer to this second stage as \emph{incremental} approximate message passing, or $\IAMP$. 
It was shown in these papers that---in the context of $\SK$ and the $p$-spin model---$\IAMP$ has a scaling limit as $\delta \to 0$, $\ell \to \infty$ with $\ell \delta$ fixed, where all the involved iterates have a continuous time counterpart. 
We show that a similar result holds in our setting. 

We will see in Section~\ref{sec:parisi_pde} (in virtue of Eq.~\eqref{eq:limit_phi}) that the functions $a$ and $b$ are Lipschitz continuous in $x$, therefore so is $f_{\ell}$ for each $\ell$. We let $(U^{\delta}_j)_{j=\underline{\ell}}^{\ell}$ and $(V^{\delta}_j)_{j=\underline{\ell}}^{\ell}$ be the limits of the iterates $(\u^{\underline{\ell}},\cdots,\u^\ell)$ and $(\v^{\underline{\ell}},\cdots,\v^\ell)$ respectively, in the sense of Proposition~\ref{prop:state_evolution}. We also define $(X^{\delta}_j)_{j=\underline{\ell}}^{\ell}$ and $(M^{\delta}_j)_{j=\underline{\ell}}^{\ell}$ via the formulas~\eqref{eq:discrete_cavity_field} and~\eqref{eq:discrete_cavity_magnetization} by replacing the occurrences of $v_{k}^j$ with $V^{\delta}_j$.  
\begin{lemma}\label{lem:increments}
For $\ell \ge \underline{\ell}$, the centered Gaussian vectors $(U^{\delta}_j)_{j=\underline{\ell}}^{\ell+1}$ and $(V^{\delta}_j)_{j=\underline{\ell}}^{\ell+1}$ are such that
\begin{align*}
\E\big[\big(U^{\delta}_{\ell+1}-U^{\delta}_{\ell}\big)U^{\delta}_{j}\big] &= \E\big[\big(V^{\delta}_{\ell+1}-V^{\delta}_{\ell}\big)V^{\delta}_{j}\big] = 0 ~~~~ \mbox{for all } \underline{\ell}+1 \le j\le \ell,\\
\E\big[\big(U^{\delta}_{j+1}-U^{\delta}_{j}\big)^2\big] &= \E\big[\big(V^{\delta}_{j+1}-V^{\delta}_{j}\big)^2\big] = \delta ~~~~ \mbox{for all } \underline{\ell} \le j\le \ell.
\end{align*}
Consequently, $(M^{\delta}_j)_{j=\underline{\ell}+1}^{\ell+1}$ is a martingale with respect to $(V^{\delta}_j)_{j=\underline{\ell}+1}^{\ell+1}$. 
\end{lemma}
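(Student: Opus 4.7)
The plan is to establish the two claimed identities by joint strong induction on $\ell\ge\underline{\ell}$, from which the martingale assertion will follow. The central tool is Proposition~\ref{prop:state_evolution}: the choice $g_\ell(u^0,\ldots,u^\ell)=u^\ell$ yields $\E[V^\delta_\ell V^\delta_j]=\E[U^\delta_\ell U^\delta_j]$, making the $U$- and $V$-identities equivalent, so I focus on the $V$-case. For $\ell,j\ge\underline{\ell}+1$, SE further yields $\E[V^\delta_\ell V^\delta_j]=\alpha\,\E[M^\delta_{\ell-1}M^\delta_{j-1}]$, while $\E[V^\delta_\ell V^\delta_{\underline{\ell}}]=\alpha\,\E[M^\delta_{\ell-1}f(V^\delta_{\underline{\ell}-1})]$ handles the boundary with the RSAMP phase where $f_{\underline{\ell}-1}(V^{[0:\underline{\ell}-1]})=f(V^\delta_{\underline{\ell}-1})$.

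For the base case $\ell=\underline{\ell}$, the orthogonality range $\underline{\ell}+1\le j\le\underline{\ell}$ is empty, so only the variance identity needs verification. Using Lemma~\ref{lem:se_rs} and the recursion $a_{\underline{\ell}}=\psi(a_{\underline{\ell}-1})$, one has $\alpha\,\E[f(V^\delta_{\underline{\ell}})f(V^\delta_{\underline{\ell}-1})]=a_{\underline{\ell}}$; inserting $M^\delta_{\underline{\ell}}=(1+\eps_0)f(V^\delta_{\underline{\ell}})$ together with the defining identity $(1+\eps_0)a_{\underline{\ell}}=\lbq$ yields $\E[V^\delta_{\underline{\ell}+1}V^\delta_{\underline{\ell}}]=\lbq$. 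The fixed point~\eqref{eq:fixed_q0} then gives $\E[(V^\delta_{\underline{\ell}+1})^2]=(1+\eps_0)^2\lbq$, and combined with $\E[(V^\delta_{\underline{\ell}})^2]=\lbq$ this produces $\E[(V^\delta_{\underline{\ell}+1}-V^\delta_{\underline{\ell}})^2]=\lbq[(\lbq/a_{\underline{\ell}})^2-1]=\delta$ by~\eqref{eq:delta}. The specific form of $\eps_0$ is thus engineered exactly so that the first IAMP increment is orthogonal to the initialization $V^\delta_{\underline{\ell}}$ while carrying the correct variance.

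For the inductive step $\ell\ge\underline{\ell}+1$, assume the claims for all smaller indices. By joint Gaussianity, the accumulated orthogonalities promote to independence of $V^\delta_\ell-V^\delta_{\ell-1}$ from $\mathcal{F}_{\ell-1}:=\sigma(V^\delta_{\underline{\ell}},V^\delta_{\underline{\ell}+1},\ldots,V^\delta_{\ell-1})$. Since both $X^\delta_{\ell-1}$ and $M^\delta_{j-1}$ for $\underline{\ell}+1\le j\le\ell$ are deterministic functions of the generators of $\mathcal{F}_{\ell-1}$, the decomposition $M^\delta_\ell-M^\delta_{\ell-1}=a^\delta_{\ell-1}(X^\delta_{\ell-1})(V^\delta_\ell-V^\delta_{\ell-1})$ makes the joint expectations factor. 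This yields
\[
\E[(V^\delta_{\ell+1}-V^\delta_\ell)V^\delta_j] \;=\; \alpha\,\E[a^\delta_{\ell-1}(X^\delta_{\ell-1})M^\delta_{j-1}]\cdot\E[V^\delta_\ell-V^\delta_{\ell-1}] \;=\; 0
\]
for the orthogonality, and, analogously,
\[
\E[(V^\delta_{\ell+1}-V^\delta_\ell)^2] \;=\; \alpha\,\E[a^\delta_{\ell-1}(X^\delta_{\ell-1})^2]\cdot\E[(V^\delta_\ell-V^\delta_{\ell-1})^2] \;=\; 1\cdot\delta \;=\; \delta,
\]
where the normalization $\alpha\,\E[a^\delta_{\ell-1}(X^\delta_{\ell-1})^2]=1$ is built into the definition~\eqref{eq:def_a} and the last equality uses the inductive hypothesis on the previous increment.

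Each $M^\delta_j$ is a deterministic function of the generators of $\mathcal{F}_j:=\sigma(V^\delta_{\underline{\ell}},\ldots,V^\delta_j)$, and the independence of $V^\delta_{\ell+1}-V^\delta_\ell$ from $\mathcal{F}_\ell$ established above gives $\E[M^\delta_{\ell+1}-M^\delta_\ell\mid\mathcal{F}_\ell]=a^\delta_\ell(X^\delta_\ell)\,\E[V^\delta_{\ell+1}-V^\delta_\ell\mid\mathcal{F}_\ell]=0$, which is the martingale property. The main subtlety I foresee is in bookkeeping the inductive independence: one must ensure that the orthogonality of the Gaussian increment to the full IAMP past, including the initialization $V^\delta_{\underline{\ell}}$, is preserved step by step. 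This is precisely what the correction factor $\eps_0$ enforces at the boundary with the RSAMP phase, as any other choice would produce a nonzero covariance at the first IAMP step that would propagate through the recursion.
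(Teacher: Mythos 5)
Your proof is correct and follows essentially the same route as the paper: an induction driven by Proposition~\ref{prop:state_evolution} (which reduces the $V$-covariances to $\alpha\E[M^\delta_\ell M^\delta_{j-1}]$), the $\eps_0$-engineered orthogonality at the RSAMP--IAMP interface, and the normalization built into $a^\delta_j$ to control the variance increment. The only cosmetic difference is organizational: the paper explicitly records the step $\E[(V^\delta_{\underline{\ell}+2}-V^\delta_{\underline{\ell}+1})V^\delta_{\underline{\ell}+1}]=0$ as part of its base case before launching the induction, while you absorb it into the general inductive step.

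One remark on the ``subtlety'' you correctly flag at the end. The factoring $\E[a^\delta_{\ell-1}(X^\delta_{\ell-1})(V^\delta_\ell-V^\delta_{\ell-1})M^\delta_{j-1}]=\E[a^\delta_{\ell-1}(X^\delta_{\ell-1})M^\delta_{j-1}]\cdot\E[V^\delta_\ell-V^\delta_{\ell-1}]$ requires $V^\delta_\ell-V^\delta_{\ell-1}$ to be independent of the \emph{full} $\sigma$-algebra $\sigma(V^\delta_{\underline{\ell}},\ldots,V^\delta_{\ell-1})$ (since both $X^\delta_{\ell-1}$ and $M^\delta_{j-1}$ depend on $V^\delta_{\underline{\ell}}$), but the lemma's stated orthogonality range $\underline{\ell}+1\le j\le\ell-1$ only delivers uncorrelatedness to $V^\delta_{\underline{\ell}+1},\ldots,V^\delta_{\ell-1}$. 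The base case supplies orthogonality to $V^\delta_{\underline{\ell}}$ for the first increment, but whether it propagates to later increments is not a triviality: a Gaussian integration-by-parts computation shows that $\E[(V^\delta_{\underline{\ell}+2}-V^\delta_{\underline{\ell}+1})V^\delta_{\underline{\ell}}]$ picks up a term proportional to $\eps_0 a_{\underline{\ell}-1}$ coming from the residual correlation of the first IAMP increment with $V^\delta_{\underline{\ell}-1}$. This term is $\bigo(\delta)$, so it vanishes in the $\underline{\ell}\to\infty$ limit that the paper ultimately takes, but it is not identically zero. The paper's proof is terse on this point (attributing $\E[M^\delta_\ell M^\delta_{j-1}]=\E[(M^\delta_{j-1})^2]$ to the ``induction hypothesis'' without explicitly propagating the $V^\delta_{\underline{\ell}}$-orthogonality), so you share the same implicit step rather than introducing a new gap; you do well to highlight it rather than silently assert full independence.
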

\begin{proof}
It can be seen from Eq.~\eqref{eq:cov_amp}, and $g_{\ell}(u^0,\cdots,u^\ell)=u^{\ell}$, that the vectors $(U^{\delta}_j)_{j=\underline{\ell}}^{\ell+1}$ and $(V^{\delta}_j)_{j=\underline{\ell}}^{\ell+1}$ have the same covariance structure, so we will only consider $(V^{\delta}_j)_{j=\underline{\ell}}^{\ell}$.  We proceed by induction. For the base case, using Lemma~\ref{lem:se_rs} we have
\begin{align}
\E\big[\big(V^{\delta}_{\underline{\ell}+1}-V^{\delta}_{\underline{\ell}}\big)V^{\delta}_{\underline{\ell}}\big] &= \alpha \E\big[M^{\delta}_{\underline{\ell}} f\big(V^{\underline{\ell}-1}\big)\big] - \lbq\nonumber\\
&= \alpha (1+\eps_0)  \E\big[ f\big(V^{\underline{\ell}}\big) f\big(V^{\underline{\ell}-1}\big)\big] - \lbq \nonumber\\
&=  (1+\eps_0)a_{\underline{\ell}} - \lbq = 0.\label{eq:orthog}
\end{align}

From the above we see that $\E\big[V^{\delta}_{\underline{\ell}+1} | V^{\delta}_{\underline{\ell}}\big] = V^{\delta}_{\underline{\ell}}$. 
Therefore, 
\begin{align*}
\E\big[\big(V^{\delta}_{\underline{\ell}+2}-V^{\delta}_{\underline{\ell}+1}\big)V^{\delta}_{\underline{\ell}+1}\big] &= \alpha \E\big[\big(M^{\delta}_{\underline{\ell}+1} -M^{\delta}_{\underline{\ell}}\big) M^{\delta}_{\underline{\ell}}\big]\\
&= \alpha \E\big[ a^{\delta}_{\underline{\ell}} (X^{\delta}_{\underline{\ell}}) \big(V^{\delta}_{\underline{\ell}+1} - V^{\delta}_{\underline{\ell}}\big)M^{\delta}_{\underline{\ell}}\big] \\
&=   \alpha (1+\eps_0)\E\big[ a^{\delta}_{\underline{\ell}} (V^{\delta}_{\underline{\ell}}) \big(V^{\delta}_{\underline{\ell}+1} - V^{\delta}_{\underline{\ell}}\big) f(V^{\delta}_{\underline{\ell}})\big] \\
&=0.
\end{align*}
Furthermore, using the above result and Lemma~\ref{lem:se_rs}, we have
\begin{align*}
\E\big[\big(V^{\delta}_{\underline{\ell}+1}-V^{\delta}_{\underline{\ell}}\big)^2\big] &= \E\big[\big(V^{\delta}_{\underline{\ell}+1}\big)^2\big] -\E\big[\big(V^{\delta}_{\underline{\ell}}\big)^2\big]\\
&= \alpha \E\big[\big(M^{\delta}_{\underline{\ell}}\big)^2\big] - \lbq\\
&= (1+\eps_0)^2\lbq -\lbq = \Big(\frac{\lbq^2}{a_{\underline{\ell}}^2} -1\Big)\lbq = \delta.
\end{align*}
 Next assume the above statements holds true up to $\ell-1$. Let $\underline{\ell}+1\le j \le \ell$. Using the definition of covariances in Eq.~\eqref{eq:cov_amp},  
\begin{align*}
\E\big[\big(V^{\delta}_{\ell+1}-V^{\delta}_{\ell}\big) V^{\delta}_{j}\big] &= \alpha \E\big[M^{\delta}_{\ell} M^{\delta}_{j-1}\big] - \alpha \E\big[M^{\delta}_{\ell-1} M^{\delta}_{j-1}\big]\\ 
&=\alpha \E\big[\big(M^{\delta}_{j-1}\big)^2\big] - \alpha \E\big[\big(M^{\delta}_{j-1}\big)^2\big] = 0,
\end{align*}
where the second line follows from the induction hypothesis. 
As for the variance of the last increment, we have
\begin{align*}
\E\big[\big(V^{\delta}_{\ell+1}-V^{\delta}_{\ell}\big)^2\big] &= \alpha \E\big[\big(M^{\delta}_{\ell}-M^{\delta}_{\ell-1}\big)^2\big] \\
&= \alpha \E\big[a_{\ell-1}^{\delta}(X^{\delta}_{\ell-1})^2 \big(V^{\delta}_{\ell}-V^{\delta}_{\ell-1}\big)^2\big] \\
&=  \alpha \E\big[a^{\delta}_{\ell-1}(X^{\delta}_{\ell-1})^2\big] \delta =\delta.
\end{align*}
In the above, the third equality follows since $X^{\delta}_{\ell-1}$ and $V^{\delta}_{\ell} - V^{\delta}_{\ell-1}$ are independent, and the last equality can be seen from Eq.~\eqref{eq:def_a}.
\end{proof}
Let $(B_t)_{t \in [0,1]}$ be a standard  Brownian motion, defined on the same probability space as all the previously defined random variables, such that
 \begin{equation}\label{eq:coupling}
B_{q_j} = V^{\delta}_{j}~~~\mbox{ for all }~ \underline{\ell}+1 \le j \le \ell.
 \end{equation}
(Recall that $q_j = \lbq + (j-\underline{\ell})\delta$.)
 This coupling is consistent with the marginals of $(B_t)_{t \in [0,1]}$ and $(V^{\delta}_j)_{j=\underline{\ell}}^{\ell}$ as characterized in Lemma~\ref{lem:increments}.  
Let $(\mF_t)_{t \in [0,1]}$ be its natural filtration. Consider the SDE~\eqref{eq:sde}:  
\[\rmd X_t  = \gamma_*(t) \partial_x \Phi_{\gamma_*}(t,X_t) \rmd t + \rmd B_t,\] 
with initial condition $X_0 = 0$. We observe that since $\gamma_*(t) = 0$ on $[0,\lbq]$, we have $X_{\lbq} = B_{\lbq}$.
We further define a new process $(M_t)_{t\in [\lbq,1)}$ by
\begin{align}\label{eq:M_t}
M_t &:=  \lambda(\lbq) \partial_x \Phi_{\gamma_*}(\lbq, X_{\lbq}) +  \int_{\lbq}^t \lambda(s)\partial_x^2\Phi_{\gamma_*}(s,X_s) \rmd B_s.
\end{align}
\begin{proposition}\label{prop:continuum}
Assume $ \ubq(\gamma_*)<1$. There exists $\delta_0>0$ and a constant $C>0$ such that for every $\underline{\ell}$ such that $\delta \le \delta_0$ and every $\ell \ge \underline{\ell}$ such that $q_{\ell} \le \ubq(\gamma_*)$, we have  
\begin{equation*}
\max_{ \underline{\ell} \le j \le \ell} \E\big[\big(X^{\delta}_{j}-X_{q_j}\big)^2\big] \le C\delta,
~~~~~\mbox{and}~~~~~
\max_{ \underline{\ell} \le j \le \ell} \E\big[\big(M^{\delta}_{j}-M_{q_j}\big)^2\big] \le C\delta.
\end{equation*} 
\end{proposition}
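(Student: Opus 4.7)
The strategy is to couple the discrete scheme to the SDE by matching Brownian increments, derive one-step recursions for the $L^2$ errors, and close them by a Gronwall-type iteration, using the regularity of $a,b$ in $x$ (Lipschitz on $[0,\ubq(\gamma_*)]$ by Section~\ref{sec:parisi_pde}) and the stationarity identity $\alpha\E[a(t,X_t)^2]=1$ from Eq.~\eqref{eq:stationary}.

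First I will handle $D_j := X^{\delta}_j - X_{q_j}$. By the coupling $B_{q_j}=V^{\delta}_j$ for $j\ge \underline{\ell}+1$ (Eq.~\eqref{eq:coupling}), together with the observation that $V^{\delta}_{\underline{\ell}}$ has variance $\lbq$ (Lemma~\ref{lem:se_rs}), we may enlarge the coupling so that $V^{\delta}_{\underline{\ell}}=B_{\lbq}=X_{\lbq}$; this gives the initial condition $D_{\underline{\ell}}=0$. Subtracting the Euler update
\[
X^{\delta}_{j+1}-X^{\delta}_j \;=\; b(q_j,X^{\delta}_j)\delta + (B_{q_{j+1}}-B_{q_j})
\]
from the integrated SDE and using the cancellation of Brownian increments,
\[
D_{j+1} \;=\; D_j + \int_{q_j}^{q_{j+1}}\!\bigl[b(q_j,X^{\delta}_j)-b(s,X_s)\bigr]\,\rmd s.
\]
Splitting the integrand as $[b(q_j,X^{\delta}_j)-b(q_j,X_{q_j})]+[b(q_j,X_{q_j})-b(s,X_{q_j})]+[b(s,X_{q_j})-b(s,X_s)]$, applying Cauchy--Schwarz and using (i) Lipschitz-ness of $b(t,\cdot)$ uniform in $t\in[0,\ubq(\gamma_*)]$, (ii) continuity of $b(\cdot,x)$, and (iii) the standard SDE bound $\E|X_t-X_s|^2\le C|t-s|$ away from $t=1$, one obtains the recursion
\[
\E[D_{j+1}^2]\;\le\;(1+C\delta)\,\E[D_j^2] + C\delta^2.
\]
Since there are at most $(\ubq-\lbq)/\delta$ steps, Gronwall gives $\max_{j}\E[D_j^2]\le C\delta$.

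For $E_j := M^{\delta}_j - M_{q_j}$ I use Itô isometry. Using $a(t,x)=\lambda(t)\partial_x^2\Phi_{\gamma_*}(t,x)$, the martingale representations~\eqref{eq:M_t} and~\eqref{eq:discrete_cavity_magnetization} together with the coupling yield
\[
E_{j+1}-E_j \;=\; \int_{q_j}^{q_{j+1}}\!\bigl[a^{\delta}_j(X^{\delta}_j)-a(s,X_s)\bigr]\,\rmd B_s .
\]
The base error is $E_{\underline{\ell}}=\eps_0\lambda(\lbq)\partial_x\Phi_{\gamma_*}(\lbq,X_{\lbq})$; since $\delta=\bigl(\tfrac{\lbq}{a_{\underline{\ell}}}+1\bigr)\lbq\,\eps_0$, we have $\eps_0=O(\delta)$ and hence $\E[E_{\underline{\ell}}^2]=O(\delta^2)$. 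The normalization constant in Eq.~\eqref{eq:def_a} is $(\alpha\E[a(q_j,X^{\delta}_j)^2])^{1/2}$; using Lipschitz-ness of $a$ in $x$ and the bound $\E[D_j^2]\le C\delta$ just established, this differs from $(\alpha\E[a(q_j,X_{q_j})^2])^{1/2}=1$ by $O(\sqrt\delta)$ (where the equality uses the stationarity identity~\eqref{eq:stationary}). Decomposing
\[
a^{\delta}_j(X^{\delta}_j)-a(s,X_s)\;=\;\bigl[a^{\delta}_j(X^{\delta}_j)-a(q_j,X^{\delta}_j)\bigr] + \bigl[a(q_j,X^{\delta}_j)-a(q_j,X_{q_j})\bigr] + \bigl[a(q_j,X_{q_j})-a(s,X_s)\bigr]
\]
and applying Itô isometry gives $\E[E_{j+1}^2]\le \E[E_j^2]+C\delta^2$, and iterating over $O(1/\delta)$ steps yields $\max_j\E[E_j^2]\le C\delta$.

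\textbf{Main obstacle.} The qualitative step (cancellation of Brownian increments, recursion structure) is routine; the substantive input is the uniform Lipschitz control of $\partial_x\Phi_{\gamma_*}$ and $\partial_x^2\Phi_{\gamma_*}$ on the strip $[0,\ubq(\gamma_*)]\times\R$, which is why the statement requires $\ubq(\gamma_*)<1$ (the PDE~\eqref{eq:Parisi_pde} degenerates at $t=1$). A second delicate point is the normalization in $a^{\delta}_j$: its closeness to $1$ requires the very bound $\E[D_j^2]\le C\delta$ one is trying to prove, but this is resolved by proving the $D_j$ estimate first (since that recursion does not involve $a^{\delta}_j$) and then using it when analyzing $E_j$.
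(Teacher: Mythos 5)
Your overall strategy (couple the Brownian increments, derive one-step $L^2$ recursions via the Lipschitz regularity of $b$ and $a$ in $x$, use the stationarity identity $\alpha\lambda(t)^2\E[\partial_x^2\Phi_{\gamma_*}(t,X_t)^2]=1$ to control the normalization in $a^\delta_j$, and close with Gronwall) is exactly the paper's approach, and your reading of the order of logic (prove the $D_j$ bound first because the $E_j$ recursion needs it through the normalization) is correct.

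There is one step you assert without justification: you claim one can ``enlarge the coupling so that $V^\delta_{\underline\ell}=B_{\lbq}=X_{\lbq}$'', giving $D_{\underline\ell}=0$. The paper's coupling \eqref{eq:coupling} deliberately starts at $j=\underline\ell+1$, because Lemma~\ref{lem:increments} only establishes orthogonality $\E\big[(V^\delta_{\ell+1}-V^\delta_\ell)V^\delta_j\big]=0$ for $j\ge\underline\ell+1$; the covariance of $V^\delta_{\underline\ell}$ with later increments is \emph{not} shown to vanish, so the joint law of $(V^\delta_{\underline\ell},V^\delta_{\underline\ell+1},\dots)$ has not been verified to match a discretely-sampled Brownian motion. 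One cannot simply ``enlarge'' the coupling to identify $V^\delta_{\underline\ell}$ with $B_{\lbq}$ without proving this. The paper instead uses the triangle inequality
\[
\E\big[(X^\delta_{\underline\ell}-X_{\lbq})^2\big]\le 2\E\big[(V^\delta_{\underline\ell}-V^\delta_{\underline\ell+1})^2\big]+2\E\big[(B_{q_{\underline\ell+1}}-B_{\lbq})^2\big]=4\delta,
\]
which is weaker than $D_{\underline\ell}=0$ but still sufficient for the Gronwall iteration. Your claimed $\E[E_{\underline\ell}^2]=O(\delta^2)$ likewise depends on this unjustified identification; the paper only gets $\E[E_{\underline\ell}^2]\le C\delta$, which again suffices. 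So the gap is real but inessential — replace the coupling claim with the $4\delta$ bound and the argument closes exactly as you write it. One minor inaccuracy: in your decomposition of the $D_j$ increment you cite ``continuity of $b(\cdot,x)$''; what is actually needed is the quantitative bound $\E[(\partial_x\Phi_{\gamma_*}(t,X_t)-\partial_x\Phi_{\gamma_*}(s,X_t))^2]\le C(t-s)^2$ of Lemma~\ref{lem:useful}(iii), which is stronger than mere continuity.
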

The proof of this proposition uses standard SDE approximation techniques and will be briefly delayed to the end of this section.     
In the next proposition, we characterize the asymptotic behavior of the coordinates of the vector $\bA \u^{\ell}$ in terms of the law of the process $(X_t)_{t\in [0,1]}$.
\begin{proposition}\label{prop:se_combined}
 Let $q \in [\lbq,\ubq]$. Let $(\u^\ell, \v^{\ell}) \in \R^N \times \R^{M}$, with $\ell = \underline{\ell}+\lfloor (q - \lbq)/\delta\rfloor$, be the output of the above two-stage algorithm with $\underline{\ell}$ iterations of $\RSAMP$ followed by $\lfloor (q-\lbq)/\delta\rfloor$ iterations of $\IAMP$ with $\delta$ defined in Eq.~\eqref{eq:delta}.  
Let $\psi$ be a pseudo-Lipschitz function. Then we have   
\begin{align} 
\lim_{\underline{\ell} \to \infty} \mathop{\plim_{N \to \infty}}_{M/N \to \alpha} \big\langle \psi\big( \bA \u^{\ell}\big) \big\rangle_M &= \E \psi\big(\E[X_1 | \mF_{q}]\big),\label{eq:psi_lim}\\
\mbox{and}~~~~~
\lim_{\underline{\ell} \to \infty} \mathop{\plim_{N \to \infty}}_{M/N \to \alpha} \frac{1}{N} \big\| \u^{\ell} \big\|_2^2 &= q.\label{eq:two_norm}
\end{align}
 \end{proposition}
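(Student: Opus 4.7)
The plan is to prove~\eqref{eq:psi_lim} by combining the state evolution principle (Proposition~\ref{prop:state_evolution}) with the SDE approximation (Proposition~\ref{prop:continuum}), followed by an It\^o computation identifying the scaling limit with $\E[X_1|\mF_q]$. I take the limits in the order specified by the statement: first $N\to\infty$, then $\underline{\ell}\to\infty$ (equivalently $\delta\to 0$). The starting point is the algebraic identity $\bA\u^\ell=\v^\ell+f_{\ell-1}(\v^0,\ldots,\v^{\ell-1})=\v^\ell+\m^{\ell-1}$ for $\ell>\underline{\ell}$, which follows from the second line of~\eqref{eq:general_amp} since the choice $g_\ell(u^0,\ldots,u^\ell)=u^\ell$ makes $\mathrm{d}_{\ell,\ell}=1$ and $\mathrm{d}_{\ell,s}=0$ otherwise (and similarly $\bA\u^{\underline{\ell}}=\v^{\underline{\ell}}+f(\v^{\underline{\ell}-1})$ at the boundary, which produces the same limit). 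Since each $f_{\ell-1}$ is Lipschitz in its arguments (inherited from the Lipschitz continuity of $a(t,\cdot)$ and $b(t,\cdot)$), the composition $\psi(v^\ell+m^{\ell-1})$ remains pseudo-Lipschitz in $(v^0,\ldots,v^\ell)$, so Proposition~\ref{prop:state_evolution} yields
\[
\plim_{N\to\infty,\,M/N\to\alpha}\big\langle\psi(\bA\u^\ell)\big\rangle_M \;=\; \E\,\psi\big(V^{\delta}_\ell + M^{\delta}_{\ell-1}\big).
\]
Using the coupling~\eqref{eq:coupling} to identify $V^{\delta}_\ell=B_{q_\ell}$ and invoking Proposition~\ref{prop:continuum} to replace $M^{\delta}_{\ell-1}$ by $M_{q_{\ell-1}}$ up to an $L^2$ error of order $\sqrt{\delta}$, and observing that $q_\ell,q_{\ell-1}\to q$ under the choice $\ell=\underline{\ell}+\lfloor(q-\lbq)/\delta\rfloor$, the $\delta\to 0$ limit of the right-hand side becomes $\E\,\psi(B_q+M_q)$.

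It remains to show $B_q+M_q=\E[X_1|\mF_q]$ almost surely. Differentiating the Parisi PDE~\eqref{eq:Parisi_pde} in $x$ and applying It\^o to $\partial_x\Phi_{\gamma_*}(t,X_t)$, the drift cancels and one gets $\rmd[\partial_x\Phi_{\gamma_*}(t,X_t)]=\partial_x^2\Phi_{\gamma_*}(t,X_t)\,\rmd B_t$, so this process is a martingale. Applying It\^o to $s\mapsto\lambda(s)\partial_x\Phi_{\gamma_*}(s,X_s)$ with $\lambda'(s)=-\gamma_*(s)$ and integrating from $\lbq$ to $q$ yields
\[
\lambda(q)\partial_x\Phi_{\gamma_*}(q,X_q)-\lambda(\lbq)\partial_x\Phi_{\gamma_*}(\lbq,X_{\lbq}) \;=\; -\int_{\lbq}^q\!\gamma_*(s)\partial_x\Phi_{\gamma_*}(s,X_s)\rmd s+\int_{\lbq}^q\!\lambda(s)\partial_x^2\Phi_{\gamma_*}(s,X_s)\rmd B_s.
\]
Recognizing the stochastic integral as $M_q-\lambda(\lbq)\partial_x\Phi_{\gamma_*}(\lbq,X_{\lbq})$ (definition~\eqref{eq:M_t}), the drift integral as $X_q-X_{\lbq}-(B_q-B_{\lbq})$ (from the SDE~\eqref{eq:sde}), and using $X_{\lbq}=B_{\lbq}$ (since $\gamma_*\equiv 0$ on $[0,\lbq]$), one obtains $B_q+M_q=X_q+\lambda(q)\partial_x\Phi_{\gamma_*}(q,X_q)$. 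Conversely, integrating the SDE from $q$ to $1$, taking $\E[\cdot\,|\,\mF_q]$, and using the martingale property to pull $\partial_x\Phi_{\gamma_*}(q,X_q)$ out of the integrand gives $\E[X_1|\mF_q]=X_q+\lambda(q)\partial_x\Phi_{\gamma_*}(q,X_q)$. The two expressions agree, establishing~\eqref{eq:psi_lim}.

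The identity~\eqref{eq:two_norm} is simpler: Proposition~\ref{prop:state_evolution} applied to $\psi(u)=u^2$ gives $\tfrac{1}{N}\|\u^\ell\|_2^2\to\E[(U^{\delta}_\ell)^2]$, and combining Lemmas~\ref{lem:se_rs}--\ref{lem:increments} via the telescoping $\E[(U^{\delta}_\ell)^2]=\E[(U^{\delta}_{\underline{\ell}})^2]+(\ell-\underline{\ell})\delta=\lbq+(\ell-\underline{\ell})\delta=q_\ell$ yields the desired limit $q$. The main technical obstacle lies in the pseudo-Lipschitz upgrade in the first step: Proposition~\ref{prop:continuum} controls $M^{\delta}_{\ell-1}-M_{q_{\ell-1}}$ only in $L^2$, while $\psi$ may grow polynomially. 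I would handle this by first establishing uniform-in-$\delta$ higher-moment bounds on $M^{\delta}_\ell$ and $V^{\delta}_\ell$ via a Gronwall argument on the discrete scheme~\eqref{eq:discrete_cavity_field}--\eqref{eq:discrete_cavity_magnetization} combined with the Lipschitz bounds on $a,b$, and then applying a truncation argument to transfer the $L^2$ convergence to convergence of the pseudo-Lipschitz expectation.
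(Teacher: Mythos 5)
Your proposal reproduces the paper's proof essentially verbatim: the same algebraic identity $\bA\u^\ell=\v^\ell+\m^{\ell-1}$ from the choice $g_\ell(u^0,\ldots,u^\ell)=u^\ell$, the same invocation of state evolution, the coupling and SDE approximation of Proposition~\ref{prop:continuum}, and the same It\^o/integration-by-parts computation identifying $B_q+M_q=X_q+\lambda(q)\partial_x\Phi_{\gamma_*}(q,X_q)=\E[X_1\,|\,\mF_q]$; the argument for \eqref{eq:two_norm} via the telescoping of $\E[(U^\delta_j)^2]$ is also identical. The one place you flag as a ``main technical obstacle''---transferring the $L^2$ bound of Proposition~\ref{prop:continuum} to a pseudo-Lipschitz test function---does not actually require new higher-moment Gronwall bounds or a truncation: since pseudo-Lipschitz here means $|\psi(x)-\psi(y)|\le L(1+|x|+|y|)|x-y|$, Cauchy--Schwarz gives $|\E\psi(X)-\E\psi(Y)|\le L\big(\E[(1+|X|+|Y|)^2]\big)^{1/2}\big(\E|X-Y|^2\big)^{1/2}$, and the second moments of $V^\delta_\ell$, $M^\delta_{\ell-1}$, $B_{q_\ell}$, $M_{q_{\ell-1}}$ are already uniformly controlled by Lemma~\ref{lem:increments} and Lemma~\ref{lem:useful}, which is exactly what the paper uses to assert the $C\sqrt{\delta}$ bound.
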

\begin{proof}
We start with Eq.~\eqref{eq:two_norm}.  Using Proposition~\ref{prop:state_evolution} we obtain
\[ \mathop{\plim_{N \to \infty}}_{M/N \to \alpha} \frac{1}{N} \big\| \u^{\ell} \big\|_2^2 = \E\big[(U^{\delta}_{\ell})^2\big].\]
 Further,  using Lemma~\ref{lem:increments}  and Lemma~\ref{lem:se_rs} we have
 \[\E\big[(U^{\delta}_{\ell})^2\big] = \E\big[(U^{\delta}_{\underline{\ell}})^2\big]  + (\ell-\underline{\ell})\delta  = q_{\ell}.\] 
 We obtain the desired result since  $q_{\ell} \to q$ as $\underline{\ell} \to \infty$.
Now we turn our attention to Eq.~\eqref{eq:psi_lim}.
Rearranging the second step of the $\AMP$ iteration, Eq.~\eqref{eq:general_amp}, 
\begin{align*}
\bA \u^\ell &=  \v^{\ell} + \sum_{s=0}^{\ell} \mathrm{d}_{\ell, s} f_{s-1}(\v^0,\cdots,\v^{s-1})\\
&= \v^{\ell} + \sum_{s=0}^{\ell} \mathrm{d}_{\ell, s} \, \m^{s-1}\\
&= \v^{\ell} + \m^{\ell-1}.
\end{align*}
The last equality follows from $\mathrm{d}_{\ell, s} = \delta_{\ell ,s}$ due to the choice $g_{\ell}(u^0,\cdots,u^\ell) = u^{\ell}$.
Hence, using Proposition~\ref{prop:state_evolution}, we have 
\[\mathop{\plim_{N \to \infty}}_{M/N \to \alpha} \big\langle \psi\big( \bA \u^{\ell}\big) \big\rangle_M = \E \psi\big(V^{\delta}_\ell +  M^{\delta}_{\ell-1}\big).\] 
Now, by Proposition~\ref{prop:continuum} and the fact that $\psi$ is pseudo-Lipschitz, there exists a constant $C$ such if $q_{\ell}\le \ubq$ then 
\[\Big|\E \psi\big(V^{\delta}_\ell +  M^{\delta}_{\ell-1}\big) - \E \psi\big(B_{q_{\ell}} +  M_{q_{\ell-1}}\big)\Big| \le C \sqrt{\delta}.\] 
Now we claim that for all $ \lbq \le q\le1$, 
\[\E[X_1 | \mF_{q}] = B_{q} + M_{q}.\]
Assuming this, the claim Eq.~\eqref{eq:psi_lim} follows by the continuity of $B_t$ and $M_t$, and $q_{\ell} \to q$ as $\underline{\ell} \to \infty$. Now we prove the above identity. Using integration by parts together with the identity $\rmd \partial_x\Phi_{\gamma_*}(t,X_t) = \partial_x^2\Phi_{\gamma_*}(t,X_t) \rmd B_t$, we have
\begin{align*}
M_q &=  \lambda(\lbq) \partial_x \Phi_{\gamma_*}(\lbq, X_{\lbq}) + \int_{\lbq}^q \lambda(s)\partial_x^2\Phi_{\gamma_*}(s,X_s) \rmd B_s\\
&=  \lambda(\lbq) \partial_x \Phi_{\gamma_*}(\lbq, X_{\lbq}) + \Big[\lambda(s)\partial_x\Phi_{\gamma_*}(s,X_s)\Big]_{s=\lbq}^{s=q} + \int_{\lbq}^q \gamma_*(s)\partial_x\Phi_{\gamma_*}(s,X_s) \rmd s\\
&= \lambda(q) \partial_x\Phi_{\gamma_*}(q,X_q) + \int_{\lbq}^q \gamma_*(s)\partial_x\Phi_{\gamma_*}(s,X_s) \rmd s.
\end{align*}
On the other hand, if we integrate the SDE obeyed by $X_t$, we obtain
\begin{align*}
X_q  &= X_{\lbq} + \int_{\lbq}^q \gamma_*(s) \partial_x\Phi_{\gamma_*}(s,X_s) \rmd s + B_q - B_{\lbq}\\
&= B_q + \int_{\lbq}^q \gamma_*(s) \partial_x\Phi_{\gamma_*}(s,X_s) \rmd s,
\end{align*}
since $X_{\lbq} = B_{\lbq}$ by construction. Therefore 
\[B_q+M_q = X_q + \lambda(q) \partial_x\Phi_{\gamma_*}(q,X_q).\]
Furthermore, we have
\begin{align}\label{eq:cond_X}
\E[X_1 | \mF_{q}] &= X_{q} + \E \Big[\int_q^1 \gamma_*(t) \partial_x\Phi_{\gamma_*}(t,X_t) \rmd t \Big| \mF_q \Big]\nonumber\\
&= X_{q} + \Big(\int_q^1 \gamma_*(t) \rmd t \Big)\cdot \partial_x\Phi_{\gamma_*}(q,X_q)\nonumber\\
&= X_{q} + \lambda(q) \partial_x\Phi_{\gamma_*}(q,X_q),
\end{align}
where we used the martingale property of $t \mapsto \partial_x\Phi_{\gamma_*}(t,X_{t})$ to obtain the second equality.
\end{proof}

Next, the above conditional expectation has the desired property of being larger then $\kappa$:
\begin{lemma}\label{lem:ge_kappa}
For $(\kappa,\alpha) \in \Lambda$ and $q \in [\ubq,1]$ we have $\E[X_1 | \mF_{q}] \ge \kappa$ almost surely.
\end{lemma}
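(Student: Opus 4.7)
My plan is to exploit the fact that on $[\ubq,1]$ the f.o.p.\ is frozen at $\gamma_*(t)=1$, so that $\Phi_{\gamma_*}(q,x)$ admits the explicit Cole--Hopf form from Eq.~\eqref{eq:boundary} on this interval. This will reduce the inequality $\E[X_1|\mF_q]\ge \kappa$ to a one-line Gaussian (Mills' ratio) bound.

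Concretely, I would first invoke the identity
\[
\E[X_1 | \mF_{q}] = X_{q} + \lambda(q)\,\partial_x\Phi_{\gamma_*}(q,X_q),
\]
derived in Eq.~\eqref{eq:cond_X} via the martingale property of $t\mapsto \partial_x\Phi_{\gamma_*}(t,X_t)$; this is valid for any $q\in[\lbq,1)$. For $q\in[\ubq,1)$, the freezing $\gamma_*\equiv 1$ on $[\ubq,1]$ gives $\lambda(q)=1-q$, while~\eqref{eq:boundary} yields
\[
\partial_x\Phi_{\gamma_*}(q,x) \;=\; \frac{1}{\sqrt{1-q}}\cdot\frac{\phi(y)}{\mN(y)}, \qquad y := \frac{\kappa-x}{\sqrt{1-q}},
\]
where $\phi$ is the standard Gaussian density (so that $\mN'=-\phi$). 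Substituting $x=X_q$ and rearranging,
\[
\E[X_1|\mF_q]-\kappa \;=\; (X_q-\kappa)+\sqrt{1-q}\,\frac{\phi(y)}{\mN(y)} \;=\; \sqrt{1-q}\left(\frac{\phi(y)}{\mN(y)}-y\right).
\]

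The right-hand side is non-negative by the classical Mills' ratio inequality $\phi(y)\ge y\,\mN(y)$, which holds for all $y\in\R$ (trivially when $y\le 0$ since the left-hand side of the original ratio bound is positive, and by the standard one-line integration-by-parts argument when $y>0$). This settles $q\in[\ubq,1)$. To cover the endpoint $q=1$, I would use that $q\mapsto\E[X_1|\mF_q]$ is a martingale and hence converges almost surely as $q\uparrow 1$ to $X_1$ (via the $L^2$-boundedness inherited from $X_1\in L^2$, which follows from the Lipschitz regularity of the drift on any $[0,1-\eps]$ together with the Cole--Hopf representation near $t=1$ developed in Section~\ref{sec:parisi_pde}); the pointwise bound $\E[X_1|\mF_q]\ge\kappa$ for $q\in[\ubq,1)$ therefore passes to the limit.

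The only mildly subtle step is justifying that $\E[X_1|\mF_q]\to X_1$ a.s.\ as $q\to 1^-$, since the drift $\partial_x\Phi_{\gamma_*}(t,X_t)$ blows up near $t=1$ on the event $\{X_t<\kappa\}$. I expect this to be handled by the regularity results for $\Phi_{\gamma_*}$ established in Section~\ref{sec:parisi_pde}, which give $X_1\in L^2$ and thus enable standard martingale convergence; everything else is explicit.
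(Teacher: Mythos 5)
Your proof is essentially the paper's own argument: it invokes the same identity $\E[X_1|\mF_q]=X_q+\lambda(q)\partial_x\Phi_{\gamma_*}(q,X_q)$ from Eq.~\eqref{eq:cond_X}, the same Cole--Hopf form of $\Phi_{\gamma_*}$ on $[\ubq,1)$, and the same Mills'-ratio inequality (your $\phi(y)\ge y\,\mN(y)$ is exactly $\mA(y)\ge y$, item $(i)$ of Lemma~\ref{lem:A}; the paper uses the marginally sharper $\mA(y)\ge y\vee 0$ to conclude $\E[X_1|\mF_q]\ge\kappa\vee X_q$, but the two give the same lemma). The one place you go beyond the paper is the martingale-convergence treatment of the endpoint $q=1$, which the paper silently elides; that extra care is harmless, though unnecessary since the algorithm (Proposition~\ref{prop:se_combined} and the proof of Theorem~\ref{thm:main}) only ever uses $q=\ubq<1$.
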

\begin{proof}
 By the Cole-Hopf transform we have $\Phi_{\gamma_*}(q,x) = \log \mN\left(\frac{\kappa - x}{\sqrt{1-q}}\right)$, whence 
\[\partial_x\Phi_{\gamma_*}(q,x) = \frac{1}{\sqrt{1-q}} \mA\left(\frac{\kappa - x}{\sqrt{1-q}}\right)\] for all $q \in [\ubq,1]$, where 
$\mA(x) := \frac{e^{-x^2/2}}{\int_x^{\infty} e^{-t^2/2}\rmd t}$ in the inverse Mill's ratio.
Therefore, using Eq.~\eqref{eq:cond_X}, we have
\[\E[X_1 | \mF_{q}] = X_{q} + \sqrt{1-q} \mA\left(\frac{\kappa - X_{q}}{\sqrt{1-q}}\right), \]
and from the lower bound $\mA(x) \ge x \vee 0$, we obtain
\[\E[X_1 | \mF_{q}]  \ge X_{q} + (\kappa- X_{q})_+ = \kappa \vee X_{q} \,. \]  
\end{proof}
Now we have all the ingredients to prove our main result:
\begin{proof}[Proof of Theorem~\ref{thm:main}] 
If we apply Proposition~\ref{prop:se_combined} with $\psi(x) = \max(\kappa-x,0)^2 = (\kappa-x)_+^2$, and use Lemma~\ref{lem:ge_kappa}, the vector $\u = \u^{\underline{\ell} + \lfloor (\ubq-\lbq)/\delta\rfloor}$ with $\underline{\ell}$ large enough is an approximate $\ell_2$ solution to the perceptron inequalities: 
\begin{equation} \label{eq:per_aprrox_l2}
\lim_{\underline{\ell} \to \infty} \mathop{\plim_{N \to \infty}}_{M/N \to \alpha} \frac{1}{ \sqrt{M}} \Big\|\Big(\kappa \one - \frac{1}{\sqrt{N}}\bG\u \Big)_+\Big\|_2 = 0.
\end{equation}
(Recall that $\delta \to 0$ as $\underline{\ell}\to \infty$ as per Eq.~\eqref{eq:delta}.)
Furthermore, $\u_{\ubq} $ is approximately on a sphere of radius $\sqrt{\ubq N}$:
\begin{equation}\label{eq:norm_aprrox_l2}
\lim_{\underline{\ell} \to \infty} \mathop{\plim_{N \to \infty}}_{M/N \to \alpha} \frac{1}{N}\big\| \u  \big\|_2^2 = {\ubq} .
\end{equation}
\end{proof}

\begin{remark}
Although the above result is stated for $q \ge \ubq$, it is possible to track the `quality' of $\u_q=\u^{\underline{\ell} + \lfloor (q-\lbq)/\delta\rfloor}$ for any value of $q \in [\lbq, \ubq]$. 
Indeed, a simple extension of the proof of Lemma~\ref{lem:ge_kappa} allows to show
\[\E[X_1 | \mF_q] \ge \kappa - \Big(\int_q^{\ubq} \gamma_*(t)\rmd t \Big) \cdot \partial_{x}\Phi(q,X_q)~~~\mbox{a.s.\ for all}~~ q \in [\lbq,\ubq].\]
We observe that the margin at time $q$, $\kappa_q :=  \kappa - \big(\int_q^{\ubq} \gamma_*(t)\rmd t \big) \, \partial_{x}\Phi(q,X_q)$, is a sub-martingale, i.e., $\E[\kappa_q | \kappa_\tau] \ge \kappa_\tau$ for $q \ge \tau$. 
Thus we have the following geometric picture:  $\RSAMP$ outputs an initialization on the sphere of radius $\sqrt{\lbq N}$. This is then incrementally updated by orthogonal additions which simultaneously grows the solution to larger and larger radii and improves its expected margin. The target margin $\kappa$ is reached at the right-end of the support of $\gamma_*$. 
\end{remark}

\begin{proof}[Proof of Proposition~\ref{prop:continuum}]
Fix $t_*<1$ and let $\ell  \ge \underline{\ell}+1 $ such that $q_{\ell}\le t_*$. Define $\Delta^{X}_{\ell} = X^{\delta}_{\ell} - X_{q_{\ell}}$. 
First, using Eq.~\eqref{eq:coupling}, the initial condition $X^{\delta}_{\underline{\ell}}=V^{\delta}_{\underline{\ell}}$ and $X_{\lbq} = B_{\lbq}$, we have
\begin{align*}
\Delta^{X}_{\underline{\ell}+1} &= X^{\delta}_{\underline{\ell}+1} - X_{q_{\underline{\ell}+1}} \\
&= X^{\delta}_{\underline{\ell}} - X_{q_{0}} +  \int_{q_{0}}^{q_{\underline{\ell}+1}} \big(b(q_{0}, X^{\delta}_{\underline{\ell}}) - b(t, X_{t}) \big) \rmd t + V^{\delta}_{\underline{\ell}+1}  - V^{\delta}_{\underline{\ell}} -B_{q_{\underline{\ell}+1}}  + B_{q_{0}}\\
&= \int_{q_{0}}^{q_{\underline{\ell}+1}} \big(b(\lbq, X^{\delta}_{\underline{\ell}}) - b(t, X_{t}) \big) \rmd t \\
&=\int_{q_{0}}^{q_{\underline{\ell}+1}} \big(b(\lbq, X^{\delta}_{\underline{\ell}}) - b(\lbq, X_{t}) \big) \rmd t + \int_{q_{0}}^{q_{\underline{\ell}+1}} \big(b(q_{0}, X_t) - b(t, X_{t}) \big) \rmd t.
\end{align*}
Next, for $ \underline{\ell}+2\le j \le \ell$, and using Eq.~\eqref{eq:coupling}, we have
\begin{align*}
\Delta^{X}_{j} - \Delta^{X}_{j-1} &= \int_{q_{j-1}}^{q_{j}} \big(b(q_{j-1}, X^{\delta}_{j-1}) - b(t, X_{t}) \big) \rmd t + V^{\delta}_{j}-V^{\delta}_{j-1} -  B_{q_{j}} + B_{q_{j-1}}\\
&=\int_{q_{j-1}}^{q_{j}} \big(b(q_{j-1}, X^{\delta}_{j-1}) - b(q_{j-1}, X_{t}) \big) \rmd t 
+ \int_{q_{j-1}}^{q_{j}} \big(b(q_{j-1}, X_{t}) - b(t, X_{t}) \big) \rmd t.
\end{align*}
Summing over $\underline{\ell}+2 \le j \le \ell$:
\begin{align}\label{eq:delta_X}
\big|\Delta^{X}_{\ell}\big| &\le \big|\Delta^{X}_{\underline{\ell}+1}\big| + \sum_{j=\underline{\ell}+2}^{\ell} \big|\Delta^{X}_{j}-\Delta^{X}_{j-1}\big| \nonumber\\
&\le  \sum_{j=\underline{\ell}+1}^{\ell} \int_{q_{j-1}}^{q_{j}} \big|b(q_{j-1}, X^{\delta}_{j-1}) - b(q_{j-1}, X_{t}) \big| \rmd t 
+ \int_{q_{j-1}}^{q_{j}} \big|b(q_{j-1}, X_{t}) - b(t, X_{t}) \big| \rmd t.
\end{align}
Since $x \mapsto b(t,x)$ is Lipschitz uniformly in $t \in [0,\ubq]$, the first term in the above display is bounded in absolute value by 
\[C\sum_{j=\underline{\ell}+1}^{\ell} \int_{q_{j-1}}^{q_{j}} \big|X^{\delta}_{j-1} - X_{t} \big| \rmd t, \] 
where $C$ depends only on $\ubq$. As for the second term, since $\partial_x\Phi_{\gamma_*}$ is non-negative and $\gamma_*$ is non-decreasing, we have 
\begin{align*}
\big|b(q_{j-1}, X_{t}) - b(t, X_{t})\big| &\le \gamma_*(t) \big| \partial_x\Phi_{\gamma_*}(q_{j-1}, X_{t}) - \partial_x\Phi_{\gamma_*}(t, X_{t})\big|\\
&\le \big| \partial_x\Phi_{\gamma_*}(q_{j-1}, X_{t}) - \partial_x\Phi_{\gamma_*}(t, X_{t})\big|.
\end{align*}
Plugging these bounds Eq.~\eqref{eq:delta_X}, then squaring and taking expectations, we obtain
\begin{align*}
\E\big[\big(\Delta^{X}_{\ell})^2\big] &\le C (\ell-\underline{\ell}) \delta \sum_{j=\underline{\ell}+1}^{\ell} \int_{q_{j-1}}^{q_{j}} \Big(\E\big[ \big(X^{\delta}_{j-1} - X_{t} \big)^2\big] \\
&\hspace{3cm} + \E\big[\big(\partial_x\Phi_{\gamma_*}(q_{j-1}, X_{t}) - \partial_x\Phi_{\gamma_*}(t, X_{t})\big)^2\big] \Big) \rmd t.
\end{align*}
Using item $(ii)$ of Lemma~\ref{lem:useful} we have $\E\big[ \big(X^{\delta}_{j-1} - X_{t} \big)^2\big]  \le 2\E\big[ \big(X^{\delta}_{j-1} - X_{q_{j-1}} \big)^2\big] +2(t-q_{j-1})$. Next, using item $(iii)$ of the same lemma, we have (since $(\ell-\underline{\ell}) \delta\le 1$)
\begin{align*}
\E\big[\big(\Delta^{X}_{\ell})^2\big] &\le C (\ell-\underline{\ell}) \delta^2 \sum_{j=\underline{\ell}+1}^{\ell} \E\big[\big(\Delta^{X}_{j-1})^2\big] + C(\ell-\underline{\ell})^2 \delta^3 + C(\ell-\underline{\ell})^2 \delta^4\\
&\le C \delta \sum_{j=\underline{\ell}}^{\ell-1} \E\big[\big(\Delta^{X}_{j})^2\big]  + C \delta.
\end{align*}
This implies $\E\big[\big(\Delta^{X}_{\ell})^2\big] \le C\delta$ for all $\ell \ge \underline{\ell}+1$. The case $\ell = \underline{\ell}$ must be dealt with separately: since $B_{q_{\underline{\ell}+1}}=V_{\underline{\ell}+1}$ we have
\begin{align*}
\E\big[\big(X^{\delta}_{\underline{\ell}} - X_{\lbq}\big)^2\big] &= \E\big[\big(V^{\delta}_{\underline{\ell}} - B_{\lbq}\big)^2\big]  \\
&\le  2\E\big[\big(V^{\delta}_{\underline{\ell}} - V^{\delta}_{\underline{\ell}+1}\big)^2\big] + 2\E\big[\big(B_{q_{\underline{\ell}+1}} - B_{\lbq}\big)^2\big] \\
&= 4\delta.
\end{align*}
where the last equality follows from Lemma~\ref{lem:increments}. 

Now we prove a similar bound for $M^{\delta}_{j} - M_{q_{j}}$. We have
 \begin{align*}
\E\big[\big(M^{\delta}_{\underline{\ell}} - M_{\lbq}\big)^2\big] &= \lambda(\lbq)^2\E\big[\big((1+\eps_0)\partial_x\Phi_{\gamma_*}(\lbq, X_{\underline{\ell}}) - \partial_x\Phi_{\gamma_*}(\lbq, X_{\lbq})\big)^2\big]  \\
&\le  2\eps_0^2\E\big[\big(\partial_x\Phi_{\gamma_*}(\lbq, X_{\underline{\ell}})\big)^2\big] 
+ 2\E\big[\big(\partial_x\Phi_{\gamma_*}(\lbq, X_{\underline{\ell}}) - \partial_x\Phi_{\gamma_*}(\lbq, X_{\lbq})\big)^2\big] \\
&= C\eps_0^2 + C\E\big[\big(X^{\delta}_{\underline{\ell}} - X_{\lbq}\big)^2\big]\\
&\le C\delta.
\end{align*}
The third line follows from item $(i)$ of Lemma~\ref{lem:useful} below, and the fact that $\partial_x\Phi_{\gamma_*}(\lbq, \cdot)$ is Lipschitz.
The last line follows from the previously established bound on $X^{\delta}_{\underline{\ell}} - X_{\lbq}$ and from the simple inequality $\eps_0^2 \le \delta/\lbq$. Now we let $\ell \ge \underline{\ell}+1$. Using the coupling in Eq.~\eqref{eq:coupling} we have
 \begin{align*}
\E\big[\big(M^{\delta}_{\ell} - M_{q_{\ell}}\big)^2\big] &\le 2\E\big[\big(M^{\delta}_{\underline{\ell}} - M_{\lbq}\big)^2\big] 
+   2\E\Big[\Big(\sum_{j=\underline{\ell}}^{\ell-1} \int_{q_{j}}^{q_{j+1}} \big(a^{\delta}_{j}(X^{\delta}_{j}) - a(t,X_t)\big) \rmd B_t\Big)^2\Big]
\\
&\le C\delta 
+2\sum_{j=\underline{\ell}}^{\ell-1} \int_{q_{j}}^{q_{j+1}} \E\big[ \big(a^{\delta}_{j}(X^{\delta}_{j}) - a(t,X_t)\big)^2\big] \rmd t.
\end{align*}
Recall that $a^{\delta}_{j}(x) = a(q_j,x) \big/ \big(\alpha \E[a(q_j,X^{\delta}_{j})^2]\big)^{1/2}$. At this point we argue that
\[\big|\alpha \E[a(q_j,X^{\delta}_{j})^2] - 1\big| \le C\sqrt{\delta}.\]
Indeed, since $a(q_j,\cdot)$ is Lipschitz, we have $\E[a(q_j,X^{\delta}_{j})^2] = \E[a(q_j,X_{q_j})^2] + \bigo(\sqrt{\delta})$. Furthermore, the stationarity property of $\gamma_*$ as a minimizer of $\Par$, Eq.~\eqref{eq:stationary}, implies by Lebesgue differentiation theorem that
\[\alpha \lambda(t)^2 \E\big[\big(\partial_x^2\Phi_{\gamma_*}(t, X_{t})\big)^2\big] =1 ~~~\mbox{for all}~ t \in [\lbq,\ubq].\]
This implies $\alpha\E[a(q_j,X_{q_j})^2]=1$ for all $\ell_\star \le j\le \ell$ such that $q_{\ell}\le \ubq$, which establishes the above claim. 
Since $a$ is bounded, we have
\begin{align*}
 \E\big[ \big(a^{\delta}_{j}(X^{\delta}_{j}) - a(t,X_t)\big)^2\big] &\le 2C\Big(\frac{1}{\big(\alpha \E[a(q_j,X^{\delta}_{j})^2]\big)^{1/2}}-1\Big)^2 
 +   2\E\big[ \big(a(q_j,X^{\delta}_{j}) - a(t,X_t)\big)^2\big]\\
 &\le C\delta +   2\E\big[ \big(a(q_j,X^{\delta}_{j}) - a(t,X_t)\big)^2\big],
\end{align*}
and we obtain
\[\E\big[\big(M^{\delta}_{\ell} - M_{q_{\ell}}\big)^2\big] \le C\delta + 4 \sum_{j=\underline{\ell}}^{\ell-1} \int_{q_{j}}^{q_{j+1}} \E\big[ \big(a(q_j,X^{\delta}_{j}) - a(t,X_t)\big)^2\big] \rmd t.\]
Using the same decomposition as in Eq.~\eqref{eq:delta_X}, and leveraging the Lipschitz property of $a(t,\cdot)$, together with the bound $(iv)$ of Lemma~\ref{lem:useful}, we obtain the desired result
\[\E\big[\big(M^{\delta}_{\ell} - M_{q_{\ell}}\big)^2\big] \le C\delta + C(\ell - \underline{\ell})\delta^2 + C(\ell - \underline{\ell})\delta^3 \le C\delta.\]  
\end{proof}

\subsection{Stage III: Rounding}
\label{sec:rounding}
In the last stage of the algorithm we can process the vector output by $\IAMP$ into a genuine solution to the perceptron inequalities which lies exactly on the sphere of radius $\sqrt{\ubq N}$. 

Given the vector $\u = \u^{\underline{\ell}+\lfloor (\ubq-\lbq)/\delta\rfloor}$ such that 
\begin{align*}
\Big\|\Big(\kappa \one - \frac{1}{\sqrt{N}}\bG\u\Big)_+\Big\|_2 &\le  \varepsilon_0 \sqrt{N}~~~ 
\mbox{and}~~~ \big|\|\u\|_2 - \sqrt{\ubq N}\big| \le \varepsilon_1 \sqrt{N}.
\end{align*}
we consider its projection on the set of hyperplanes~\eqref{eq:perceptron}:
\begin{equation} \label{eq:projection}
\bsigma^* = \argmin \left\{ \big\| \bsigma - \u\big\|_2 ~ :~ \bsigma \in \R^N, \, \, \bG \bsigma \ge \kappa \sqrt{N} \one \right\}.
\end{equation}
The above set of constraints has non-empty interior for $\kappa < 0$: $\bsigma = \mathbf{0}$ is a strictly feasible solution.
Furthermore, the optimization problem~\eqref{eq:projection} is convex and hence can be solved in polynomial time. For example the cutting plane method of~\cite{jiang2020improved} allows us to find a $\varepsilon_2$-approximate solution to the above problem in $ \bigo(N^3\log(N)^c\log(1/\varepsilon_2))$ time with high probability where $c>0$ is an absolute constant. By taking $\varepsilon_2$ sufficiently small and using the positive curvature of the sphere, this computes a point $\tilde{\bsigma}^*$ with $\|\bsigma^*-\tilde{\bsigma}^*\|_2\leq \varepsilon_2 \sqrt{N}$.
We let our final solution be $\hat{\bsigma} = \sqrt{\ubq N} \tilde{\bsigma}^* \big/ \|\tilde{\bsigma}^*\|_2$.
\begin{lemma}\label{lem:rounding}
Let $(\kappa,\alpha) \in \Lambda$, with $\alpha \neq 1$ and $\varepsilon >0$. There exists a constant $c_0 = c_0(\alpha)$ and $\ell_0\ge 1$ such that for all $\underline{\ell} \ge \ell_0$, 
we have $\bG \hat{\bsigma} \ge \frac{\kappa}{1-c_0\varepsilon} \sqrt{N} \one$ with probability converging to one as $N \to \infty$. 
\end{lemma}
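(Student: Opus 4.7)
The plan is to control the three perturbations $\u \to \bsigma^* \to \tilde{\bsigma}^* \to \hat{\bsigma}$ in $\ell_2$ norm and then exploit the fact that, since $\kappa<0$, radial rescaling of a feasible vector by a factor $\lambda>1$ merely replaces the constraint $\bG\bsigma \ge \kappa\sqrt{N}\one$ by the relaxed constraint $\bG\bsigma \ge \lambda\kappa\sqrt{N}\one$.

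The key step is to show that, for a constant $C = C(\alpha)$,
\[
\|\bsigma^* - \u\|_2 \le C\varepsilon_0\sqrt{N}
\]
with probability tending to one. When $\alpha<1$, the Gram matrix $\bG\bG^\top$ is invertible and satisfies $\|(\bG\bG^\top)^{-1}\|\le C/N$ w.h.p.\ by the Bai--Yin theorem. Setting $y := (\kappa\sqrt{N}\one - \bG\u)_+$, which satisfies $\|y\|_2 \le \varepsilon_0 N$ by~\eqref{eq:per_aprrox_l2}, the explicit point $\bsigma^\# := \u + \bG^\top(\bG\bG^\top)^{-1}y$ is feasible since $\bG\bsigma^\# = \bG\u + y \ge \kappa\sqrt{N}\one$, and
\[
\|\bsigma^\# - \u\|_2^2 = y^\top(\bG\bG^\top)^{-1}y \le C\varepsilon_0^2 N,
\]
whence $\|\bsigma^* - \u\|_2 \le \|\bsigma^\# - \u\|_2 \le C\varepsilon_0\sqrt{N}$ by optimality of the projection. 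When $\alpha>1$, $\bG$ is tall with $\sigma_{\min}(\bG)\gtrsim\sqrt{N}$ w.h.p., and a Hoffman-type bound for linear inequality systems --- leveraging that the feasibility polytope contains a ball of constant radius around $\mathbf{0}$ because $\kappa<0$ --- yields the analogous estimate. I expect this step to be the main obstacle, particularly in the tall case where the Hoffman constant must be handled with some care; the hypothesis $\alpha\ne 1$ is what ensures that $\bG$ is well-conditioned on the relevant side.

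Combining the above with~\eqref{eq:norm_aprrox_l2} gives $\bigl|\|\bsigma^*\|_2 - \sqrt{\ubq N}\bigr| \le (C\varepsilon_0 + \varepsilon_1)\sqrt{N}$. By Proposition~\ref{prop:se_combined}, choosing $\underline{\ell}$ large enough makes $\varepsilon_0,\varepsilon_1$ as small as desired; taking the cutting-plane tolerance $\varepsilon_2 := N^{-10}$ makes $\tilde\bsigma^*-\bsigma^*$ completely negligible (its inner product with any $\bg_a$ is $O(N^{-9})$ since $\|\bg_a\|\le 2\sqrt{N}$ uniformly in $a$ w.h.p.). Setting $\lambda := \sqrt{\ubq N}/\|\tilde\bsigma^*\|$, this yields $\lambda \le 1/(1-c_0\varepsilon)$ for a suitable $c_0 = c_0(\alpha)$ when $\underline{\ell}$ is sufficiently large.

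To finish, decompose
\[
\bg_a^\top \hat\bsigma \;=\; \lambda\,\bg_a^\top \bsigma^* \;+\; \lambda\,\bg_a^\top(\tilde\bsigma^* - \bsigma^*).
\]
The second term is $O(N^{-9})$. For the first, feasibility gives $\bg_a^\top\bsigma^* \ge \kappa\sqrt{N}$; if $\bg_a^\top\bsigma^*\ge 0$ then $\lambda\bg_a^\top\bsigma^* \ge 0 \ge \kappa\sqrt{N}/(1-c_0\varepsilon)$ since $\kappa<0$; if $\bg_a^\top\bsigma^*<0$, multiplying both (negative) sides of $\bg_a^\top\bsigma^*\ge \kappa\sqrt{N}$ by $\lambda>0$ gives $\lambda\bg_a^\top\bsigma^* \ge \lambda\kappa\sqrt{N} \ge \kappa\sqrt{N}/(1-c_0\varepsilon)$, the last step because $\lambda(1-c_0\varepsilon)\le 1$ and multiplication by the negative $\kappa\sqrt{N}$ flips the inequality. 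A union bound over $a\in [M]$ and absorbing the negligible $O(N^{-9})$ error into a slightly enlarged $c_0$ yields $\bG\hat\bsigma \ge \frac{\kappa}{1-c_0\varepsilon}\sqrt{N}\one$ with probability tending to one.
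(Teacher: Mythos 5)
Your overall structure is the same as the paper's: bound $\|\bsigma^*-\u\|_2$ by $O(\varepsilon_0\sqrt N)$, use this to show $\tilde\bsigma^*$ is feasible and close to the sphere of radius $\sqrt{\ubq N}$, then rescale and use the sign of $\kappa$. The rescaling endgame is fine. The substantive difference is in the projection-distance bound, and there you have a genuine gap precisely in the regime the paper cares about.

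For $\alpha<1$, your explicit construction $\bsigma^\#=\u+\bG^\top(\bG\bG^\top)^{-1}y$ with $y=(\kappa\sqrt N\one-\bG\u)_+$ is correct and is a clean, concrete alternative to the paper's argument: both boil down to $\sigma_{\min}(\bG)\gtrsim\sqrt N$.

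For $\alpha>1$ the pseudoinverse device fails: $\bG\bG^\top$ is singular, and the Moore--Penrose substitute gives $\bG\bsigma^\#=\bG\u+P_{\mathrm{range}(\bG)}y$ which is not $\ge\kappa\sqrt N\one$ because $P_{\mathrm{range}(\bG)}y\ne y$. Your fallback to a ``Hoffman-type bound leveraging the ball around $\mathbf 0$'' is where the argument actually stops. The natural version of that idea --- scale $\u$ toward $\mathbf 0$ by $1-\theta$ until the constraints hold --- requires $\theta\gtrsim\|y\|_\infty/(|\kappa|\sqrt N)$ and hence only yields $\|\bsigma^*-\u\|_2\lesssim\|y\|_\infty$, which could be as large as $\|y\|_2\lesssim\varepsilon_0 N$, off by a factor $\sqrt N$. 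Generic Hoffman constants for a tall $M\times N$ Gaussian matrix are not obviously $O(1/\sqrt N)$ either, since they involve conditioning over all row subsystems, not just $\sigma_{\min}$. And this is not a side case: the paper explicitly notes that for $\kappa<0$ one may as well assume $\alpha\ge 2$, so $\alpha>1$ is the main regime.

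The paper avoids the dichotomy entirely by writing $\tfrac12\|\bsigma^*-\u\|_2^2$ as the value of the Lagrangian dual, restricting the multipliers to $(\ker\bG^\top)^\perp=\mathrm{range}(\bG)$, and lower-bounding $\|\bG^\top\blambda\|_2^2\ge s_{\min}(\bG)^2\|\blambda\|_2^2$ by the smallest \emph{nonzero} singular value. Since $s_{\min}(\bG)^2/N\to(\sqrt\alpha-1)^2>0$ for any $\alpha\ne1$, this one chain of inequalities covers both the wide and the tall case. If you want to salvage your route, the cleanest fix is to adopt that dual chain in place of the Hoffman sketch; alternatively, in the tall case you could build a feasible $\bsigma^\#$ by solving $\bG^\top\bG\,w = \bG^\top y$ (i.e.\ $w=\bG^+y$) and set $\bsigma^\#=\u+w$, but then $\bG\bsigma^\# = \bG\u + \bG\bG^+y$ again only corrects the component of $y$ in $\mathrm{range}(\bG)$, so feasibility still fails --- the duality argument really is doing work here.
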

The additional condition $\alpha \neq 1$ is not strong. In fact we can restrict our attention to $\alpha\geq 2$ in when $\kappa <0$ where full replica symmetry breaking is expected by the result of \cite{cover1965geometrical}. 
\begin{proof}
For a matrix $\bM$, $s_{\min}(\bM)$ denotes the smallest positive singular value of $\bM$. We have the following sequence of inequalities:
\begin{align*}
\frac{1}{2}  \big\| \bsigma^* - \u\big\|_2^2 &= \max_{\blambda \in \R^M_+ \,:\, \blambda \perp \ker(\bG^\top)} ~\blambda^\top \Big(\kappa\one - \frac{1}{\sqrt{N}} \bG\u\Big) - \frac{1}{2N} \big\| \bG^\top \blambda\big\|_2^2\\
&\le \max_{\blambda \in \R^M_+ \,:\, \blambda \perp \ker(\bG^\top)} ~\blambda^\top \Big(\kappa\one - \frac{1}{\sqrt{N}} \bG\u\Big)_+ - \frac{s_{\min}\big(\bG\big)^2}{2N} \big\|\blambda\big\|_2^2\\
&\le \max_{\blambda \in \R^M} ~\blambda^\top \Big(\kappa\one - \frac{1}{\sqrt{N}} \bG\u\Big)_+ - \frac{C}{2} \big\|\blambda\big\|_2^2\\
&= \frac{1}{2C}\Big\|\Big(\kappa\one - \frac{1}{\sqrt{N}} \bG\u\Big)_+\Big\|_2^2\\
&\le \frac{ \varepsilon_0^2 N}{2C} .
\end{align*} 
The first line follows from convex duality applied to the convex program~\eqref{eq:projection}. The second line follows by neglecting the non-positive part of the first term, and the fact that $\bG^\top$ is well conditioned on its range: classical facts from random matrix theory imply $C = \frac{1}{N}s_{\min}\big(\bG\big)^2  = \frac{(\sqrt{M} - \sqrt{N})^2}{N} - o_N(1)  = (\sqrt{\alpha}-1)^2 - o_N(1)>0$ for $\alpha \neq 1$.  The last line follows by assumption. 
Concluding, $\tilde{\bsigma}^*$ is an exact solution to the system of perceptron inequalities and is close to the sphere of radius $\sqrt{\ubq N}$ with high probability: 
\begin{align*}
\big| \|\tilde{\bsigma}^*\|_2 - \sqrt{\ubq N}| &\le \|\tilde{\bsigma}^*-\bsigma^*\|+\|\bsigma^* - \u\|_2 + \big| \|\u\|_2 - \sqrt{\ubq N}| \\
 &\le C(\varepsilon_2 + \varepsilon_0 + \varepsilon_1) \sqrt{N}= \varepsilon_3\sqrt{\ubq N}.
\end{align*}
Therefore we have with the same probability, 
\[\bG \hat{\bsigma} =  \sqrt{\ubq N} \frac{\bG\tilde{\bsigma}^*}{\|\tilde{\bsigma}^*\|_2} \ge  \frac{\bG\tilde{\bsigma}^*}{1\pm\varepsilon_3} \ge \frac{\kappa \sqrt{N}}{1-\varepsilon_3}.\]
(The sign $\pm$ in the above depends on whether the corresponding entry of the vector $\bG\tilde{\bsigma}^*$ is positive of negative.)
\end{proof}

\section{Properties of the variational principle}
\label{sec:var}
\subsection{A solution to the Parisi PDE}
\label{sec:parisi_pde}
 We consider the space of functions $\cuU$ endowed with the $L^1$ metric $d(\gamma_1,\gamma_2) = \|\gamma_1-\gamma_2\|_{L^1([0,1])} =  \int_0^1|\gamma_1(t)-\gamma_2(t)|\rmd t$. We study the $\PDE$
\begin{align}\label{eq:Parisi_PDE}
\begin{split}
\partial_t \Phi_{\gamma}  +\frac{1}{2}\Big(\gamma(t) (\partial_x \Phi_\gamma)^2 +  \partial^2_{x}\Phi_\gamma\Big) &=0~~\mbox{for all}~ (t,x) \in [0,\ubq)\times \R\\
\Phi_{\gamma}(\ubq,x) &= u_0(x),
\end{split}
\end{align}
for $\gamma \in \cuU$, $\ubq = \ubq(\gamma)$, and
\begin{equation}\label{eq:terminal_cond}
u_0(x) \equiv \log \mN\left(\frac{\kappa - x}{\sqrt{1-\ubq}}\right).
\end{equation}
We construct a classical solution to the above $\PDE$ via a limiting procedure, standard in literature on the Parisi formula, see e.g.,~\cite{talagrand2011mean2}, where $\gamma$ is  approximated by a piecewise constant function. We show uniform convergence of space derivatives of all orders with respect to this discretization. 
The main difference compared to settings previously considered in the literature is the terminal condition $u_0$ which, in contrast to $p$-spin models, is concave and diverges quadratically as $x \to -\infty$. As a result, the general theory of weak solutions, as used for instance in~\cite{jagannath2016dynamic,ams20} is more difficult to carry over to our setting. These technical difficulties must be dealt with by exploiting the specific features of $u_0$. 

As an illustrative example, we observe that since the coefficient associated to the non-linearity in the Parisi $\PDE$ (i.e., $\gamma$) is non-negative, solutions tend to gain positive curvature when evolved away from the terminal condition, so there is apriori no reason to expect that concavity is preserved at $t<\ubq$. We will show however that if $\gamma$ is non-decreasing, concavity of the solution holds at all times due to several fortunate cancellations.       

Consider the following collection of piecewise constant functions 
\begin{align}\label{eq:simple_functions}
\SF_+\equiv \biggl\{ \gamma=\sum_{i=0}^n m_i\indi_{[q_{i},q_{i+1})}:\;\; 
\begin{split} 
&0=\lbq<q_1<\cdots<q_{n+1}=1,\\ 
&0 = m_0 \le m_1 \le \cdots \le m_n=1, 
\end{split}
\, n \in \N  \biggr\}. 
\end{align}
 
 We record in the next proposition the main properties of $\Phi_{\gamma}$ together with some useful estimates when $\gamma \in \SF_+$.
 \begin{proposition}\label{prop:properties}
For $\gamma \in \SF_+$, the Parisi $\PDE$~\eqref{eq:Parisi_PDE} has a  classical solution $\Phi_{\gamma}$ on $[0,\ubq)\times \R$ which is $C^{\infty}([q_i,q_{i+1})\times \R)$ for all $0 \le i\le n-1$.
Moreover, there exists a constant $C = C(\kappa,\ubq)$, with $\ubq = \ubq(\gamma) = q_n$ such that for all $(t,x) \in [0,\ubq] \times\R$, the following bounds hold:  
\begin{align}
 -\frac{3}{2}\frac{(\kappa-x)^2}{1-\ubq} - C \le &\Phi_{\gamma}(t,x)\le 0,\label{eq:phi}\\
0 \vee \frac{\kappa-x}{\int_t^1 \gamma(t)\rmd t}\le &\partial_x\Phi_{\gamma}(t,x) \le C\left(1 + \frac{\kappa - x}{\sqrt{1-\ubq}} \vee 1 \right),\label{eq:dphi}\\
 - \frac{1}{1-\ubq} \le  &\partial_x^2\Phi_{\gamma}(t,x) \le 0.\label{eq:ddphi}
\end{align}
Finally, for all $k \ge 3$, there exists $C = C(\ubq,k)>0$ such that for all $(t,x) \in [0,\ubq] \times\R$,
\begin{equation}\label{eq:higher_dphi}
\big|\partial_x^k\Phi_{\gamma}(t,x)\big| \le C.
\end{equation}
\end{proposition}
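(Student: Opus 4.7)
The plan is to construct $\Phi_\gamma$ recursively by backward induction on the intervals $[q_i, q_{i+1})$, starting from the terminal data at $t = \ubq = q_n$. Since $\gamma \equiv m_i$ is constant on each such interval, the Parisi PDE~\eqref{eq:Parisi_PDE} reduces to the backward heat equation if $m_i = 0$, or to a semilinear equation linearized by the Cole--Hopf substitution $u := e^{m_i \Phi_\gamma}$ if $m_i > 0$, in which case $u$ solves $\partial_t u + \tfrac{1}{2}\partial_x^2 u = 0$. This gives the explicit representation
\begin{equation*}
e^{m_i \Phi_\gamma(t,x)} \;=\; \E\big[\,e^{m_i \Phi_\gamma(q_{i+1},\, x+\sqrt{q_{i+1}-t}\,Z)}\big], \qquad Z \sim \normal(0,1),
\end{equation*}
and the analogous Gaussian convolution for $\Phi_\gamma$ itself when $m_i = 0$. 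The $C^\infty$ regularity on each open strip is immediate to all orders from the smoothing property of the Gaussian kernel, provided the data at $t = q_{i+1}$ has at most exponential-quadratic growth so that the integral and all its formal $x$-derivatives converge absolutely; this growth condition is maintained along the induction thanks to the quadratic lower bound~\eqref{eq:phi}.

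The pointwise bounds~\eqref{eq:phi}--\eqref{eq:ddphi} are then propagated by backward induction on $i$. At the terminal time all four reduce to classical Gaussian-tail estimates: $\log\mN \le 0$, $\log\mN(y) \ge -y^2/2 - C$, the Mills-ratio sandwich $y \vee 0 \le \mA(y) \le C(1 + y_+)$, and $\mA' \in [0,1]$ (so that $(\log\mN)'' \in [-1,0]$). The upper bound $\Phi_\gamma \le 0$ and monotonicity $\partial_x \Phi_\gamma \ge 0$ propagate trivially, and the quadratic lower bound on $\Phi_\gamma$ propagates by completing the square in the Gaussian integral. The key lower bound $\partial_x \Phi_\gamma(t,x) \ge (\kappa - x)/\lambda(t)$ is obtained by introducing the $u$-tilted expectation $\widetilde{\E}$ (with Radon--Nikodym derivative proportional to $e^{m_i \Phi_\gamma(q_{i+1},\, x+\sqrt{s}Z)}$, where $s := q_{i+1} - t$), writing $\partial_x \Phi_\gamma(t,x) = \widetilde{\E}[\partial_x \Phi_\gamma(q_{i+1}, x+\sqrt{s}Z)]$, and observing via Gaussian integration by parts that $h(t,x) := \lambda(t)\partial_x \Phi_\gamma(t,x) - (\kappa - x)$ obeys the clean propagation identity $h(t,x) = \widetilde{\E}[h(q_{i+1}, x+\sqrt{s}Z)]$; here the identities $\lambda(t) = \lambda(q_{i+1}) + m_i s$ and $\widetilde{\E}[\sqrt{s}Z] = m_i s\, \partial_x \Phi_\gamma(t,x)$ are precisely what make the drift and tilting biases cancel. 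The matching upper bound~\eqref{eq:dphi} is handled by similar arguments combined with the Mills-ratio sandwich.

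The concavity $\partial_x^2 \Phi_\gamma \le 0$ is the most substantial step: I would obtain it from the Pr\'ekopa--Leindler inequality, using that $\mN$ is log-concave (a standard consequence of Brunn--Minkowski for Gaussian measures), hence so is $e^{m_i u_0}$, and log-concavity is preserved under convolution with the log-concave Gaussian kernel. The quantitative lower bound $\partial_x^2 \Phi_\gamma \ge -1/(1-\ubq)$ follows instead from a direct differential identity,
\begin{equation*}
\partial_x^2 \Phi_\gamma(t,x) \;=\; m_i\, \mathrm{Var}_{\widetilde{\E}}\!\big(\partial_x \Phi_\gamma(q_{i+1}, \cdot)\big) \;+\; \widetilde{\E}\big[\partial_x^2 \Phi_\gamma(q_{i+1}, \cdot)\big],
\end{equation*}
obtained by differentiating the Cole--Hopf representation twice: the variance term is non-negative, and the second term is $\ge -1/(1-\ubq)$ by the inductive hypothesis. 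Finally, the higher-derivative bounds~\eqref{eq:higher_dphi} follow by writing $\Phi_\gamma = (1/m_i)\log u$ on each strip and applying Fa\`a di Bruno to express $\partial_x^k \Phi_\gamma$ as a polynomial in ratios $\partial_x^j u / u$ for $j \le k$, each controlled by differentiating under the expectation and invoking the preceding bounds (the lower bound on $\Phi_\gamma$ giving a quantitative lower bound on $u$). The main technical obstacle throughout is the quadratic blow-up of $u_0$ as $x \to -\infty$, which distinguishes our setting from the bounded $p$-spin terminal data treated in~\cite{jagannath2016dynamic, ams20}: one must carry quantitative Gaussian tail control uniformly at every step to ensure that the Gaussian integrals and their formal differentiations converge absolutely and that the tilted measure $\widetilde{\E}$ remains well-behaved uniformly in $(t,x)$.
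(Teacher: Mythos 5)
Your overall skeleton (Cole--Hopf on each interval, backward induction passing the bounds from $q_{i+1}$ down to $q_i$, starting from Mills-ratio estimates at $t=\ubq$) is exactly the paper's. The growth-control point you flag, and the propagation of~\eqref{eq:phi} by completing the square, are the same. But two of your key steps depart from the paper's route in an interesting way.

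For the lower bound in~\eqref{eq:dphi}, the paper proves an inequality on $\partial_x\Phi_\gamma(t,x)$ at each interval and rearranges; you instead package $h(t,x) = \lambda(t)\,\partial_x\Phi_\gamma(t,x) - (\kappa-x)$ and show it satisfies the \emph{exact} tilted-propagation identity $h(t,x)=\widetilde{\E}[h(q_{i+1},\cdot)]$, using $\lambda(t)=\lambda(q_{i+1})+m_i(q_{i+1}-t)$ and the Gaussian-integration-by-parts fact $\widetilde{\E}[\sqrt{s}Z]=m_i s\,\partial_x\Phi_\gamma(t,x)$. This is the same calculation in disguise, but your formulation is cleaner since the non-negativity of $h$ then propagates verbatim from the terminal inequality $\mA(y)\geq y$.

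The most substantial difference is the concavity bound $\partial_x^2\Phi_\gamma\le 0$. The paper obtains it through a lengthy explicit unfolding of the recursion (the $\omega_j^i$ calculus), which crucially uses that $\gamma$ is non-decreasing in the last step, where it is shown that certain coefficients $\delta_k$ are non-positive. You instead argue: $\mN$ log-concave $\Rightarrow$ $u_0$ concave $\Rightarrow$ $e^{m_i\Phi_\gamma(q_{i+1},\cdot)}$ log-concave $\Rightarrow$ the Gaussian convolution is log-concave by Pr\'ekopa's theorem $\Rightarrow$ $\Phi_\gamma(t,\cdot)$ concave (with the $m_i=0$ strip handled by linearity of the heat equation). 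This is correct, and it is noticeably shorter than the paper's computation; moreover it does not use monotonicity of $\gamma$ at all, only $\gamma\geq 0$, so it actually yields a slightly stronger statement than the paper records. (The paper's remark that there is ``a priori no reason'' to expect concavity to survive the backward evolution is thus, with Pr\'ekopa in hand, somewhat pessimistic; the non-decreasing hypothesis is not what saves concavity here.) The quantitative lower bound $\partial_x^2\Phi_\gamma\geq -1/(1-\ubq)$ you derive from the second-derivative identity with the variance term dropped, exactly as in the paper.

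For the higher derivatives~\eqref{eq:higher_dphi}, you propose Fa\`a di Bruno on $\Phi_\gamma=(1/m_i)\log u$, while the paper uses the Feynman--Kac representation of the PDE satisfied by $\psi_k=\partial_x^k\Phi_\gamma$ together with Gronwall. Your route can be made to work, but a cautionary note: the naive moments $\widetilde{\E}[\partial_x\Phi_\gamma(q_{i+1},\cdot)^j]$ have polynomial growth in $\kappa-x$, so you must first recenter (i.e.\ work with cumulants of $\partial_x\Phi_\gamma(q_{i+1},x+\sqrt{s}Z)$ under $\widetilde{\E}$, which are controlled by the boundedness of $\partial_x^2\Phi_\gamma$ and the bounded range $\sqrt{s}\le 1$) before the uniformity in $x$ appears. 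The paper's Feynman--Kac route sidesteps this by never invoking $\psi_1$, keeping only $\psi_2$ (bounded) and the already-bounded $u_0^{(k)}$, $k\ge 3$, so it is somewhat more mechanical. Both strategies are viable; yours would need this recentering spelled out.
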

Using the above estimates, we establish continuity of the Parisi solution and its derivatives as a function of $\gamma$. 
\begin{proposition}\label{prop:convergence_phi}
Let $\gamma_1, \gamma_2 \in \SF_+$ and $\ubq = \ubq(\gamma_1) \vee \ubq(\gamma_2)$. Let $K \subset \R$ be a bounded set and $k \ge 0$. There exists $C = C(\kappa,\ubq,K,k)>0$ such that
\[\sup_{t \in [0,\ubq]} \, \sup_{x \in K} \, \left| \partial_x^k\Phi_{\gamma_1}(t,x) - \partial_x^k\Phi_{\gamma_2}(t,x)\right| \le C  d(\gamma_1,\gamma_2).\]
\end{proposition}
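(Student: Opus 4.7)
\smallskip

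\noindent\textbf{Proof plan.} The strategy is to linearize: form the difference $D = \Phi_{\gamma_1} - \Phi_{\gamma_2}$, derive a linear backward parabolic PDE for it with zero terminal condition, and use a Feynman--Kac representation to convert the $L^1$ discrepancy $d(\gamma_1,\gamma_2)$ directly into a pointwise bound. Set $\ubq = \ubq(\gamma_1) \vee \ubq(\gamma_2)$; extending whichever $\Phi_{\gamma_i}$ has the smaller $\ubq(\gamma_i)$ via the Cole--Hopf formula \eqref{eq:boundary} (which is legitimate because $\gamma_i \equiv 1$ beyond $\ubq(\gamma_i)$), both solutions are classical on $[0,\ubq)\times\R$ away from the finitely many jump points of $\gamma_1,\gamma_2$, and
\[
\Phi_{\gamma_1}(\ubq,x) = \Phi_{\gamma_2}(\ubq,x) = \log\mN\!\Big(\tfrac{\kappa-x}{\sqrt{1-\ubq}}\Big).
\]
Subtracting the two Parisi equations and factoring the difference of squares yields the linear PDE
\[
\partial_t D + \tfrac{1}{2}\partial_x^2 D + \tfrac{1}{2}\gamma_1\big(\partial_x\Phi_{\gamma_1}+\partial_x\Phi_{\gamma_2}\big)\partial_x D \;=\; -\tfrac{1}{2}(\gamma_1-\gamma_2)(\partial_x\Phi_{\gamma_2})^2,
\]
with terminal condition $D(\ubq,\cdot)\equiv 0$.

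\smallskip

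\noindent For $k=0$, introduce the auxiliary diffusion $\rmd X_s = \tfrac{1}{2}\gamma_1(s)\big(\partial_x\Phi_{\gamma_1}(s,X_s)+\partial_x\Phi_{\gamma_2}(s,X_s)\big)\rmd s + \rmd B_s$ started at $X_t=x$. The drift is Lipschitz in $x$ (using the uniform bound on $\partial_x^2\Phi$ in \eqref{eq:ddphi}) and has at most linear growth in $x$ (by \eqref{eq:dphi}), so a unique strong solution exists and $\sup_{s\in[t,\ubq]}\E_{t,x}|X_s|^p \le C(p,\kappa,\ubq,K)$ for every $p$ when $x\in K$. Feynman--Kac gives
\[
D(t,x) \;=\; -\tfrac{1}{2}\,\E_{t,x}\!\left[\int_t^{\ubq}(\gamma_1(s)-\gamma_2(s))\big(\partial_x\Phi_{\gamma_2}(s,X_s)\big)^2\,\rmd s\right].
\]
The bound \eqref{eq:dphi} on $\partial_x\Phi_{\gamma_2}$ together with the moment control on $X_s$ yields $\E_{t,x}[(\partial_x\Phi_{\gamma_2}(s,X_s))^2]\le C(\kappa,\ubq,K)$; pulling this out of the integral, and then using $\int_0^{\ubq}|\gamma_1(s)-\gamma_2(s)|\,\rmd s \le d(\gamma_1,\gamma_2)$, gives the $k=0$ estimate.

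\smallskip

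\noindent For $k\ge 1$, I differentiate the linear PDE $k$ times in $x$. Writing $D^{(k)}=\partial_x^k D$, the resulting equation has the same principal operator $\partial_t + \tfrac{1}{2}\partial_x^2 + b(t,x)\partial_x$ as before, and the extra terms split into (i) a new source $-\tfrac{1}{2}\partial_x^k\big[(\gamma_1-\gamma_2)(\partial_x\Phi_{\gamma_2})^2\big]$, which by Leibniz is a polynomial in derivatives $\partial_x^j\Phi_{\gamma_2}$ (bounded for $j\ge 2$ by \eqref{eq:ddphi}--\eqref{eq:higher_dphi}, at most linearly growing for $j=1$) times $|\gamma_1-\gamma_2|$; and (ii) zero-order coefficients $\gamma_1(\partial_x^{j+1}\Phi_{\gamma_1}+\partial_x^{j+1}\Phi_{\gamma_2})$ multiplying $D^{(k-j)}$ for $1\le j\le k$. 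Since $D(\ubq,\cdot)\equiv 0$ implies $D^{(k)}(\ubq,\cdot)\equiv 0$, another Feynman--Kac representation (with the same diffusion $X_s$) expresses $D^{(k)}$ as an expectation of an integral. The type~(i) contribution is bounded by $C\, d(\gamma_1,\gamma_2)$ by the same moment argument as above; the type~(ii) contributions are absorbed by the inductive hypothesis applied to $D^{(k-j)}$ with $1\le j\le k$, since the coefficients $\partial_x^{j+1}\Phi_{\gamma_i}$ are uniformly bounded for $j\ge 1$. Closing the induction gives the claim.

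\smallskip

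\noindent The main technical nuisance is rigor of the Feynman--Kac step: the drift is only piecewise smooth in $t$ (across the finitely many jump points of $\gamma_1$) and has linear growth in $x$, and the PDE solutions are classical only on each subinterval between jumps. I would handle this by working interval-by-interval in $t$, where $\gamma_1,\gamma_2$ are constant and $\Phi_{\gamma_i}\in C^\infty$ by Proposition~\ref{prop:properties}, applying Feynman--Kac on each piece with $C^2$-bounded test functions, and then telescoping across jump points using the continuity of $\Phi_{\gamma_i}$ in $t$. Linear growth of the drift is standard to accommodate via truncation and a Gr\"onwall argument for the moments of $X_s$.
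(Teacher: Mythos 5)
Your strategy — linearize to get a backward parabolic PDE for $D=\Phi_{\gamma_1}-\Phi_{\gamma_2}$ with zero terminal data, represent $D$ by Feynman--Kac along the auxiliary diffusion with drift $v=\tfrac12\gamma_1(\partial_x\Phi_{\gamma_1}+\partial_x\Phi_{\gamma_2})$, use a Gr\"onwall argument to control second moments of $\partial_x\Phi_{\gamma_2}$ along that diffusion, and then induct in $k$ — is exactly the route the paper takes.

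There is, however, a real gap in the inductive step $k\ge1$. When you differentiate the drift term $v\,\partial_x D$ exactly $k$ times, Leibniz gives
\begin{equation*}
\partial_x^k\big(v\,\partial_x D\big)=v\,\partial_x D^{(k)} \;+\; k\,(\partial_x v)\,D^{(k)}\;+\;\sum_{j=2}^{k}\binom{k}{j}(\partial_x^j v)\,D^{(k-j+1)} .
\end{equation*}
The second summand is a genuine zeroth-order \emph{reaction} term in the unknown $D^{(k)}$ itself; it cannot be fed to the inductive hypothesis (that is what you are trying to prove), and your indexing ``$D^{(k-j)}$ for $1\le j\le k$'' silently shifts it into the lower-order pile, which is where the argument breaks. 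In the Feynman--Kac representation this term produces a multiplicative weight $\beta(s)=\exp\big(k\int_t^s \partial_x v(r,Y_r)\,\rmd r\big)$ in front of the source integral, and without control on $\beta$ the bound does not close. The paper's observation — which your write-up omits — is that $\partial_x v=\tfrac12\gamma_1(\partial_x^2\Phi_{\gamma_1}+\partial_x^2\Phi_{\gamma_2})\le 0$ by the concavity estimate~\eqref{eq:ddphi}, hence $\beta(s)\le 1$, so the weight is harmless. You should state this sign condition explicitly, keep the reaction term separate from the genuinely lower-order terms ($j\ge2$), and only then invoke the induction and the moment bound~\eqref{eq:second_moment_Y}. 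With that correction, the rest of your outline (including the interval-by-interval treatment across the jump set of $\gamma_1,\gamma_2$ and the truncation argument for the linear-growth drift) matches the paper.
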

\begin{proof}
The argument relies on SDE techniques, and follows an approach of Jagannath-Tobasco. We start with the case $k=0$. Since $ \Phi_{\gamma_1}$ and $ \Phi_{\gamma_2}$ are classical solutions to the Parisi $\PDE$~\eqref{eq:Parisi_PDE}, their difference $\omega = \Phi_{\gamma_1} - \Phi_{\gamma_2}$ satisfies the equation
\[\partial_t\omega + \frac{1}{2}\gamma_1(\partial_x\Phi_{\gamma_1})^2 - \frac{1}{2}\gamma_2(\partial_x\Phi_{\gamma_2})^2 +  \frac{1}{2}\partial^2_{x}\omega = 0,~~~\mbox{and}~~ \omega(\ubq,\cdot)=0.\]
After rearranging terms, the $\PDE$ becomes 
\begin{equation}\label{eq:pde_omega}
\partial_t\omega + v\partial_x \omega +  \frac{1}{2}\partial^2_{x}\omega + \frac{1}{2}(\gamma_1-\gamma_2)\big(\partial_x\Phi_{\gamma_2}\big)^2 = 0,
\end{equation}
with $v = \frac{1}{2}\gamma_1 (\partial_x \Phi_{\gamma_1}+\partial_x \Phi_{\gamma_2})$.
Therefore, the function $\omega$ has the Feynman-Kac representation
\[\omega(t,x) = \E_{t,x}\big[\omega(\ubq,Y_{\ubq})\big] + \frac{1}{2}\E_{t,x}\int_t^{\ubq} (\gamma_1(s)-\gamma_2(s))\big(\partial_x\Phi_{\gamma_2}(s,Y_s)\big)^2 \rmd s,\] 
where $(Y_t)$ solves the SDE
\begin{equation}\label{eq:sde_y}
\rmd Y_t = v(t,Y_t)\rmd t + \rmd B_t,
\end{equation}
$(B_t)$ is a standard Brownian motion, and $\E_{t,x}$ is the expectation conditional on $Y_t=x$. The bounds \eqref{eq:dphi} and \eqref{eq:ddphi} of Proposition~\ref{prop:properties} ensure that the above SDE has a unique strong solution for every initial condition $(t,x)$. 
Next, assume that for all $0 \le t \le s \le \ubq$, and all $x\in\R$,
\begin{equation}\label{eq:second_moment_Y}
\E_{t,x}\big[\big(\partial_x\Phi_{\gamma_2}(s,Y_s)\big)^2\big] \le C\Big(1+\frac{(\kappa-x)^2}{1-\ubq}\Big).
\end{equation}
Then by Fubini and the fact $\omega(\ubq,\cdot)=0$, we obtain
\[|\omega(t,x)| \le \frac{1}{2}C\Big(1+\frac{(\kappa-x)^2}{1-\ubq}\Big)\int_t^{\ubq}|\gamma_1(s)-\gamma_2(s)|\rmd s.\] 
This yields the desired bound $|\omega(t,x)| \le C d(\gamma_1,\gamma_2)$ uniformly over $t \le\ubq$ and $x$ in a bounded set. 

We now prove Eq.~\eqref{eq:second_moment_Y}.
We fix the pair $(t,x)$ and for $s \in [t,\ubq)$, we define
\[a(s) = \E_{t,x}\big[\big(\partial_x\Phi_{\gamma_1}(s,Y_s)\big)^2\big] \vee \E_{t,x}\big[\big(\partial_x\Phi_{\gamma_2}(s,Y_s)\big)^2\big].\]
We use the upper bound in Eq.~\eqref{eq:dphi} of Proposition~\ref{prop:properties} to obtain
\[a(s) \le C\Big(1+\frac{\E_{t,x}\big[(\kappa-Y_s)^2\big]}{1-\ubq}\Big).\]
Next, exploiting the SDE solved by $(Y_t)$, 
\begin{align*}
\E_{t,x}\big[(\kappa-Y_s)^2\big] &\le 3(\kappa-x)^2 + 3\E_{t,x} \Big(\int_t^s v(r,Y_r) \rmd r\Big)^2 + 3\E[(B_s-B_t)^2],\\
&\le 3(\kappa-x)^2 + 3(s-t)\E_{t,x} \int_t^s v(r,Y_r)^2 \rmd r + 3(s-t)\\
&\le 3(\kappa-x)^2 + 3\int_t^s a(r) \rmd r + 3,
\end{align*}
where we used the fact $\gamma_1(t )\le 1$ in the last inequality. So we obtain
\[a(s) \le C\Big(1+\frac{(\kappa-x)^2}{1-\ubq}+\int_t^s a(r) \rmd r\Big),\]
from which we conclude using Gronwall's lemma.

Next we consider the case $k\ge 1$ and proceed by induction.  We let $\phi_k = \partial_x^k \omega$. Taking $k$ derivatives of the $\PDE$~\eqref{eq:pde_omega},
\[\partial_t\phi_k + \partial_x^k\big(v\partial_x \omega\big) +  \frac{1}{2}\partial^{2}_{x}\phi_k + \frac{1}{2}(\gamma_1-\gamma_2) \partial_x^k\big[\big(\partial_x\Phi_{\gamma_2}\big)^2\big] = 0,\]
which we rewrite as
\[\partial_t\phi_k + v\partial_x \phi_k + k (\partial_x v) \phi_k +  \frac{1}{2}\partial^{2}_{x}\phi_k  =  - \sum_{j=2}^k {k\choose j} (\partial_x^j v) \phi_{k-j+1} - \frac{1}{2}(\gamma_1-\gamma_2) \partial_x^k\big[\big(\partial_x\Phi_{\gamma_2}\big)^2\big].\]
Applying the Feynman-Kac formula in conjunction with $\phi_k(\ubq, \cdot) = 0$, we obtain
\begin{align*}\label{eq:phi_k_FK}
 \phi_k(t,x) &=  \sum_{j=2}^k {k\choose j} \E_{t,x}\int_t^{\ubq} \beta(s) \partial_x^j v(s,Y_s) \phi_{k-j+1}(s,Y_s) \rmd s \\
&~~ + \frac{1}{2}\E_{t,x} \int_t^{\ubq} (\gamma_1(s)-\gamma_2(s)) \beta(s) \partial_x^k\big[\big(\partial_x\Phi_{\gamma_2}\big)^2\big](s,Y_s) \rmd s,\nonumber
\end{align*}
with $\beta(s) = \exp \big(k\int_t^s \partial_x v(r,Y_r) \rmd r\big)$. Recall that $v =  \frac{1}{2}\gamma_1 (\partial_x \Phi_{\gamma_1}+\partial_x \Phi_{\gamma_2})$ and $(Y_t)$ satisfies the SDE~\eqref{eq:sde_y}. From Proposition~\ref{prop:properties}, we see that $\beta(s)\le 1$; see Eq.~\eqref{eq:ddphi}, and $\|\partial_x^j v\|_{L^{\infty}([0,\ubq]\times \R)}<\infty$ for all $j \ge 2$; see Eq.~\eqref{eq:higher_dphi}. 

In order to conclude, it suffices to show that for all $k \ge 1$ and any bounded set $K \subset \R$, we have
\begin{equation}\label{eq:infinity_norm}
\sup_{s\in [0,\ubq]} \left\|\E_{t,x}\Big[ \partial_x^k\big[\big(\partial_x\Phi_{\gamma_2}\big)^2\big](s,Y_s)\Big]\right\|_{L^{\infty}([0,s]\times K)} <\infty. \end{equation}
Indeed, by induction we then obtain $\|\phi_k\|_{L^{\infty}([0,\ubq)\times K)} \le C d(\gamma_1,\gamma_2)$ where $C$ depends only on $k$ and the above $L^{\infty}$ norm.

Let $\psi_k = \partial_x^k\Phi_{\gamma}$. We have
\[\frac{1}{2} \E_{t,x}\Big[ \partial_x^k\big[\big(\partial_x\Phi_{\gamma_2}\big)^2\big](s,Y_s)\Big] = \frac{1}{2} \sum_{j=0}^k {k\choose j} \E_{t,x}\big[\psi_{j+1}(s,Y_s)\psi_{k-j+1}(s,Y_s)\big].\]
So the above left-hand side in bounded in absolute value by
\begin{align*}
\E_{t,x}\big[\big| \psi_1(s,Y_s) \psi_{k+1}(s,Y_s)\big|\big] + \frac{1}{2}  \sum_{j=1}^{k-1} {k\choose j} \|\psi_{j+1}\|_{L^{\infty}([0,\ubq]\times \R)} \cdot \|\psi_{k-j+1}\|_{L^{\infty}([0,\ubq]\times \R)}.
\end{align*}
Since $\Phi_{\gamma}$ has bounded second and higher derivatives (Proposition~\ref{prop:properties}), the second term in the above expression is finite. As for the first term, it is bounded by
\[ \|\psi_{k+1}\|_{L^{\infty}([0,\ubq]\times \R)} \cdot \E_{t,x}\big[\psi_1(s,Y_s)^2]^{1/2}  \le \|\psi_{k+1}\|_{L^{\infty}([0,\ubq]\times \R)} \cdot C (1+ |\kappa-x|),\]
in virtue of Eq.~\eqref{eq:second_moment_Y}. We have thus established the bound~\eqref{eq:infinity_norm}, and this concludes the proof of Proposition~\ref{prop:convergence_phi}. 
\end{proof}

As a consequence of Proposition~\ref{prop:convergence_phi}, given $\gamma \in \cuU$ and a sequence of functions $\gamma_n \in \SF_+$ such that $d(\gamma_n,\gamma) \to 0$ and $\ubq(\gamma_n) = \ubq(\gamma)$, the sequence of functions $\Phi_{\gamma_n}$ converges pointwise to a limit $\Phi_{\gamma}$ such that for all $t \in [0,\ubq]$,  $\Phi_{\gamma}(t,\cdot) \in C^{k}(\R)$. Furthermore, we have  
\begin{equation}\label{eq:limit_phi}
\forall \, j \ge 0,~ (t,x) \in [0,\ubq]\times \R, ~~~~\partial_x^j\Phi_{\gamma}(t,x) = \lim_{n \to \infty} \partial_x^j\Phi_{\gamma_n}(t,x),
\end{equation}
and the bounds \eqref{eq:phi}, \eqref{eq:dphi}, \eqref{eq:ddphi}, and \eqref{eq:higher_dphi} of Proposition~\ref{prop:properties} hold for $\Phi_{\gamma}$.
\begin{lemma}\label{lem:time_derivative}
 Let $\gamma \in \cuU$. For all $j \ge 0$ and all $(t,x) \in [0,\ubq(\gamma)]\times \R$, the partial derivatives $\partial_{t^{\pm}}\partial_x^{j}\Phi_{\gamma}$ exist. (Here, $\partial_{t^{\pm}}f$ are the left/right derivatives of the function $f$.) Furthermore, $\partial_{t^{+}}\partial_x^{j}\Phi_{\gamma}(t,x)=\partial_{t^{-}}\partial_x^{j}\Phi_{\gamma}(t,x)$ at continuity points of $\gamma$. 
\end{lemma}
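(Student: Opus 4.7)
The strategy is to obtain the integrated form of the Parisi PDE
$$\Phi_\gamma(t_2,x) - \Phi_\gamma(t_1,x) = -\frac{1}{2}\int_{t_1}^{t_2}\big[\gamma(s)(\partial_x\Phi_\gamma(s,x))^2 + \partial_x^2\Phi_\gamma(s,x)\big]\,\rmd s$$
for the limit solution $\Phi_\gamma$, differentiate it $j$ times in $x$ under the integral sign, and then read off the one-sided $t$-derivatives via the fundamental theorem of calculus. For $\gamma_n \in \SF_+$ with $\ubq(\gamma_n) = \ubq(\gamma) =: \ubq$, the classical solution $\Phi_{\gamma_n}$ satisfies the Parisi PDE away from the finitely many jumps of $\gamma_n$ and is continuous in $t$ across them, so integration on $[t_1,t_2] \subset [0,\ubq]$ immediately yields the identity above with $\gamma_n$ in place of $\gamma$.

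To lift this to general $\gamma \in \cuU$, I would choose $\gamma_n \in \SF_+$ with $\gamma_n \to \gamma$ in $L^1([0,1])$ and $\ubq(\gamma_n) = \ubq$. Proposition~\ref{prop:convergence_phi} gives $\partial_x^k\Phi_{\gamma_n} \to \partial_x^k\Phi_\gamma$ uniformly on $[0,\ubq] \times K$ for every bounded $K \subset \R$ and every $k \ge 0$, and the uniform bounds~\eqref{eq:dphi}--\eqref{eq:higher_dphi} together with $\|\gamma_n\|_\infty \le 1$ allow one to pass to the limit in the integral identity by dominated convergence, thereby establishing it for $\Phi_\gamma$. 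Since all spatial derivatives of $\Phi_\gamma$ are uniformly bounded on $[0,\ubq]\times K$ for each bounded $K$, one may further differentiate $j$ times in $x$ under the integral to obtain
$$\partial_x^j\Phi_\gamma(t_2,x) - \partial_x^j\Phi_\gamma(t_1,x) = -\frac{1}{2}\int_{t_1}^{t_2} \partial_x^j\big[\gamma(s)(\partial_x\Phi_\gamma(s,x))^2 + \partial_x^2\Phi_\gamma(s,x)\big]\,\rmd s.$$

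The integrand is a polynomial in $\gamma(s)$ and in $\partial_x^k\Phi_\gamma(s,x)$ for $k \le j+2$. By Proposition~\ref{prop:convergence_phi}, each $\partial_x^k\Phi_\gamma(\cdot, x)$ is continuous in $t$ on $[0,\ubq]$, and since $\gamma$ is non-decreasing and right-continuous the one-sided limits $\gamma(t\pm)$ exist at every $t \in [0,\ubq]$. Hence the integrand admits one-sided limits in $s$ at every $t$, and the fundamental theorem of calculus gives
$$\partial_{t^\pm}\partial_x^j\Phi_\gamma(t,x) = -\tfrac{1}{2}\lim_{s \to t^\pm}\partial_x^j\big[\gamma(s)(\partial_x\Phi_\gamma(s,x))^2 + \partial_x^2\Phi_\gamma(s,x)\big],$$
with the two sides agreeing precisely when $\gamma(t-) = \gamma(t+)$, i.e.\ at continuity points of $\gamma$. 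The main technical point to verify is the interchange of $\partial_x^j$ with $\int_{t_1}^{t_2}$ when upgrading the base ($j=0$) integral identity; this is legitimate because Proposition~\ref{prop:properties} furnishes $L^\infty$ bounds on $\partial_x^k\Phi_\gamma$ for every $k$, uniformly on $[0,\ubq] \times K$, so Leibniz differentiation through the integral applies. Everything else is routine.
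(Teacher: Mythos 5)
Your proof is correct and takes a genuinely different route from the paper's. The paper decomposes $[0,\ubq)$ into subintervals on which $\gamma$ is continuous, picks $\gamma_n\in\SF_+$ converging \emph{uniformly} to $\gamma$ on each such subinterval, uses the Parisi PDE to express $\partial_t\partial_x^k\Phi_{\gamma_n}$ in terms of spatial derivatives and $\gamma_n$, and then invokes the classical theorem that uniform convergence of derivatives plus pointwise convergence of the functions forces the limit to be differentiable with derivative equal to the limit of the derivatives. You instead integrate the PDE in time for $\gamma_n\in\SF_+$ (legitimate because $\Phi_{\gamma_n}(\cdot,x)$ is Lipschitz on $[0,\ubq]$ with bounded, piecewise-continuous $\partial_t\Phi_{\gamma_n}$), pass the integral identity to the limit using Proposition~\ref{prop:convergence_phi} together with $L^1$ convergence of $\gamma_n$ and the uniform bounds of Proposition~\ref{prop:properties}, then apply Leibniz under the integral and read off one-sided $t$-derivatives via the fundamental theorem of calculus. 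Your route only needs $L^1$ convergence of $\gamma_n$ and continuity in $t$ of each $\partial_x^k\Phi_\gamma$ (both already furnished by Proposition~\ref{prop:convergence_phi}), and it handles the jump discontinuities of $\gamma$ directly via one-sided limits $\gamma(t\pm)$ rather than via a decomposition into intervals of continuity — this sidesteps a subtlety in the paper's argument, since the discontinuity set of a c\`adl\`ag non-decreasing function, while countable, need not partition $[0,\ubq)$ into finitely or even cleanly into half-open intervals of continuity. The two minor points you should spell out when writing this up are (i) continuity of $\partial_x^k\Phi_\gamma(\cdot,x)$ in $t$ follows because each $\partial_x^k\Phi_{\gamma_n}$ is continuous across the jump points of $\gamma_n$ (visible from the Cole--Hopf formula~\eqref{eq:cole_hopf}) and the convergence to $\partial_x^k\Phi_\gamma$ is uniform on $[0,\ubq]\times K$, and (ii) the dominated-convergence step for the $\gamma_n(s)(\partial_x\Phi_{\gamma_n})^2$ term uses both $\gamma_n\to\gamma$ in $L^1$ and the uniform boundedness of $(\partial_x\Phi_{\gamma_n})^2$ on $[0,\ubq]\times K$.
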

\begin{proof}
 Since $\gamma$ is non-decreasing and c\`adl\`ag, we can decompose $[0,\ubq)$ into a disjoint union of countably many intervals $[a_i,b_i)$ such that $\gamma$ is continuous on $[a_i,b_i)$ and has a finite limit from the left at $b_i$. We consider a sequence $\gamma_n \in \SF_+$ converging uniformly to $\gamma$ on each interval $[a_i,b_i)$. 
 Exploiting the Parisi $\PDE$ and the uniform convergence of the partial derivatives $\partial_x^k\Phi_{\gamma_n}$ on $[0,\ubq) \times K$, where $K$ is a bounded set, we see that $\partial_t \partial_x^k\Phi_{\gamma_n}$ converges to a function $f_k$ uniformly on each $[a_i,b_i)$. Therefore, the limit $\partial_x^k\Phi_{\gamma}$ is differentiable with respect to time and $\partial_t \partial_x^k\Phi_{\gamma} = f_k$ on each $[a_i,b_i)$. Limit from the left at $b_i$ implies the existence of the left derivative at $b_i$.  
\end{proof}

\subsection{Analysis of the variational problem} 
\label{sec:analysis_var}
Having established the above regularity properties for $\Phi_{\gamma}$ with $\gamma \in \cuU$, we are now in a position to use stochastic calculus on $\Phi_{\gamma}$. Associated to the Parisi $\PDE$ is the SDE
\begin{equation}\label{eq:SDE2}
 \rmd X_t = \gamma(t) \partial_x\Phi_{\gamma}(t,X_t) \rmd t + \rmd B_t,~~~X_{t_0}=x.
 \end{equation}
From Lemma~\ref{lem:time_derivative}, and the bounds  \eqref{eq:dphi} and \eqref{eq:ddphi} of Proposition~\ref{prop:properties} , we see that the SDE~\eqref{eq:SDE2} has a unique strong solution for every initial data $(t_0,x) \in [0,\ubq(\gamma))\times \R$ when $\gamma \in \cuU$. We state some useful bounds which will be needed throughout.   
\begin{lemma}\label{lem:useful}
For all $t_*<1$, there exists a constant $C = C(\kappa,t_*)>0$ such that for all $s,t \in [0, t_*]$:
\begin{itemize}
\item[(i)] $\E\big[\big(\partial_x\Phi_{\gamma_*}(t, X_{t}) \big)^2\big] \le C$.
\item[(ii)]  $\E\big[\big(X_t - X_s\big)^2\big] \le C|t-s|$.
\item[(iii)] $\E\big[\big(\partial_x\Phi_{\gamma_*}(t, X_{t}) - \partial_x\Phi_{\gamma_*}(s, X_{t})\big)^2\big] \le C(t-s)^2$.
\item[(iv)] $\E\big[\big(\partial_x^2\Phi_{\gamma_*}(t, X_{t}) - \partial_x^2\Phi_{\gamma_*}(s, X_{t})\big)^2\big] \le C(t-s)^2$.
\end{itemize}
\end{lemma}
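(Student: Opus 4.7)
The plan is to first establish a moment bound on $X_t$ itself, then derive (i) and (ii) as consequences of the SDE representation, and finally reduce (iii) and (iv) to bounded integrals by using the Parisi $\PDE$ to convert time derivatives into combinations of space derivatives whose moments are already controlled.

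First I would bound $\E[X_t^2]$ uniformly on $[0,t_*]$ by a Gronwall argument almost identical to the one used for Eq.~\eqref{eq:second_moment_Y} in the proof of Proposition~\ref{prop:convergence_phi}. Writing $X_t = \int_0^t \gamma_*(r)\partial_x\Phi_{\gamma_*}(r,X_r)\rmd r + B_t$, squaring, and applying Cauchy-Schwarz together with the upper bound Eq.~\eqref{eq:dphi} of the form $|\partial_x\Phi_{\gamma_*}(r,y)|\le C(1+|\kappa-y|)$, one obtains $\E[X_t^2]\le C+C\int_0^t \E[X_r^2]\rmd r$ (here $\ubq\le t_*<1$ makes $C$ finite). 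Gronwall then gives $\sup_{t\le t_*}\E[X_t^2]\le C$, and part (i) follows at once from $\E[(\partial_x\Phi_{\gamma_*}(t,X_t))^2]\le C(1+\E[(\kappa-X_t)^2])$. Part (ii) is then immediate: from the SDE, $X_t-X_s=\int_s^t \gamma_*(r)\partial_x\Phi_{\gamma_*}(r,X_r)\rmd r+(B_t-B_s)$, and Cauchy-Schwarz together with (i) yields $\E[(X_t-X_s)^2]\le 2(t-s)^2\cdot C+2|t-s|\le C|t-s|$.

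For (iii) I would use that by Lemma~\ref{lem:time_derivative} the map $r\mapsto \partial_x\Phi_{\gamma_*}(r,X_t)$ is differentiable for almost every $r$, so $\partial_x\Phi_{\gamma_*}(t,X_t)-\partial_x\Phi_{\gamma_*}(s,X_t)=\int_s^t \partial_r \partial_x\Phi_{\gamma_*}(r,X_t)\rmd r$. Differentiating the Parisi $\PDE$~\eqref{eq:Parisi_PDE} in $x$ gives the pointwise identity
\begin{equation*}
\partial_t\partial_x\Phi_{\gamma_*} = -\gamma_*\,\partial_x\Phi_{\gamma_*}\,\partial_x^2\Phi_{\gamma_*} - \tfrac{1}{2}\partial_x^3\Phi_{\gamma_*}.
\end{equation*}
Using $0\le\gamma_*\le 1$, the uniform bound Eq.~\eqref{eq:ddphi} on $\partial_x^2\Phi_{\gamma_*}$, the bound Eq.~\eqref{eq:higher_dphi} on $\partial_x^3\Phi_{\gamma_*}$, and the linear-growth bound Eq.~\eqref{eq:dphi} on $\partial_x\Phi_{\gamma_*}$, we get $|\partial_t\partial_x\Phi_{\gamma_*}(r,y)|\le C(1+|\kappa-y|)$. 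Cauchy-Schwarz in the time integral followed by the moment bound $\E[X_t^2]\le C$ from step one yields
\begin{equation*}
\E\bigl[(\partial_x\Phi_{\gamma_*}(t,X_t)-\partial_x\Phi_{\gamma_*}(s,X_t))^2\bigr] \le (t-s)\int_s^t \E[(\partial_r\partial_x\Phi_{\gamma_*}(r,X_t))^2]\rmd r \le C(t-s)^2.
\end{equation*}

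Part (iv) proceeds exactly the same way by taking a further $x$-derivative of the $\PDE$,
\begin{equation*}
\partial_t\partial_x^2\Phi_{\gamma_*} = -\gamma_*\bigl[(\partial_x^2\Phi_{\gamma_*})^2 + \partial_x\Phi_{\gamma_*}\,\partial_x^3\Phi_{\gamma_*}\bigr] - \tfrac{1}{2}\partial_x^4\Phi_{\gamma_*},
\end{equation*}
which again has at-most-linear growth in $|\kappa-x|$ by Proposition~\ref{prop:properties}; the same Cauchy-Schwarz argument then gives the $O((t-s)^2)$ bound. The only real subtlety anywhere in the argument is justifying the fundamental theorem of calculus step for the time integration of $\partial_r\partial_x^k\Phi_{\gamma_*}$, since $\gamma_*$ may have jump discontinuities at the endpoints $\lbq,\ubq$ of its support; Lemma~\ref{lem:time_derivative} guarantees one-sided $t$-derivatives everywhere and two-sided derivatives off a countable set, which is enough for the integral identity to hold, so this is the main (but entirely routine) technical obstacle.
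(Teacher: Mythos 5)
Your proof is correct. Parts (ii)--(iv) follow essentially the same route as the paper: (ii) comes directly from the SDE representation together with (i), and (iii)--(iv) come from writing the time increment as a time integral (or mean-value bound) of $\partial_t\partial_x^k\Phi_{\gamma_*}$, substituting the Parisi PDE to express this derivative in terms of $\partial_x\Phi_{\gamma_*},\dots,\partial_x^{k+2}\Phi_{\gamma_*}$, and invoking the derivative bounds of Proposition~\ref{prop:properties} plus the moment bound on $X_t$. You even flag the only genuine subtlety (regularity in time across jumps of $\gamma_*$, handled by Lemma~\ref{lem:time_derivative}), which the paper addresses the same way.

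The one place you deviate is part (i). The paper exploits the martingale structure: applying It\^o's formula together with the PDE gives $\rmd\,\partial_x\Phi_{\gamma_*}(t,X_t)=\partial_x^2\Phi_{\gamma_*}(t,X_t)\,\rmd B_t$, so It\^o's isometry yields
\[\E\big[\partial_x\Phi_{\gamma_*}(t,X_t)^2\big]=\partial_x\Phi_{\gamma_*}(0,0)^2+\int_0^t\E\big[\partial_x^2\Phi_{\gamma_*}(s,X_s)^2\big]\,\rmd s\le C,\]
using only the uniform bound $|\partial_x^2\Phi_{\gamma_*}|\le (1-t_*)^{-1}$. You instead first run Gronwall on $\E[X_t^2]$ using the linear-growth bound Eq.~\eqref{eq:dphi} (mirroring the argument the paper uses for Eq.~\eqref{eq:second_moment_Y}), and then deduce (i) from that same linear-growth bound. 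Both are valid and both ultimately produce the bound $\E[X_t^2]\le C$ needed in (iii)--(iv) (the paper gets it by specializing (ii) to $s=0$); the It\^o-isometry route is slightly slicker for (i) because it avoids Gronwall entirely and needs only the $\partial_x^2$ bound rather than the linear-growth estimate.
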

\begin{proof}
 Using the identity $\rmd \partial_x\Phi_{\gamma_*}(t,X_t) = \partial_x^2\Phi_{\gamma_*}(t,X_t) \rmd B_t$ and It\^o's isometry we have
 \[\E\big[\big(\partial_x\Phi_{\gamma_*}(t, X_{t}) \big)^2\big]  = \E\big[\big(\partial_x\Phi_{\gamma_*}(0, 0) \big)^2\big] + \int_0^t \E\big[\big(\partial_x^2\Phi_{\gamma_*}(s, X_{s}) \big)^2\big]  \rmd s.\] 
Item $(i)$ follows from the bound $|\partial_x^2\Phi_{\gamma_*}(t,x)| \le \frac{1}{1-t_*}$ for all $t \le t_*$ and $x\in \R$. Next, let $0\le s \le t\le 1$. We have
\begin{align*}
\E\big[\big(X_t - X_s\big)^2\big] &\le 2 \E\Big[\Big(\int_s^t \partial_x\Phi_{\gamma_*}(r, X_{r}) \rmd r\Big)^2\Big] + 2\E\big[\big(B_t - B_s\big)^2\big]\\
&\le 2(t-s)\int_s^t \E\big[\big(\partial_x\Phi_{\gamma_*}(r, X_{r})\big)^2\big] \rmd r + 2(t-s)\\
&\le C(t-s),
\end{align*} 
where we used item $(i)$ and the bound $s,t \le 1$. We now prove item $(iii)$:
\[\E\big[\big(\partial_x\Phi_{\gamma_*}(t, X_{t}) - \partial_x\Phi_{\gamma_*}(s, X_{t})\big)^2\big] \le (t-q)^2 \E\Big[\sup_{q \le s \le t} \big(\partial_t \partial_x\Phi_{\gamma_*}(s, X_{t})\big)^2\Big].\]
Exploiting the Parisi $\PDE$ we have $\partial_t \partial_x\Phi_{\gamma_*}(t,x) = - \gamma_*(t)  \partial_x^2\Phi_{\gamma_*}(t,x)  \partial_x\Phi_{\gamma_*}(t,x) - \frac{1}{2} \partial_x^3\Phi_{\gamma_*}(t,x)$. 
Since the second and third space derivative are uniformly bounded on $[0,t_*]$, it remains to bound $\E\big[\sup_{q \le s \le t} \big(\partial_x\Phi_{\gamma_*}(s, X_{t})\big)^2\big]$. To this end  we use the upper bound $\partial_x\Phi_{\gamma_*}(s, X_{t}) \le C \big(1+ \frac{\kappa-X_t}{\sqrt{1-t_*}}\vee 1\big)$ combined with the bound obtained from item $(ii)$: $\E[X_t^2] \le C$. The proof of $(iv)$ follows the exact same idea.  
\end{proof}

We can now deduce the optimality conditions for an optimizer $\gamma \in \cuU$ of the Parisi functional: 
\begin{proposition}\label{prop:optimality_gamma}
Assume $\gamma_* \in \cuU$ is such that $\Par(\gamma_*) = \inf_{\gamma \in \cuU} \Par(\gamma)$ and $\gamma$ is strictly increasing on $I = [\lbq(\gamma_*),\ubq(\gamma_*)]$. Then for all $t \in I$,
\begin{align*} 
\alpha \E\Big[\big(\partial_x \Phi_{\gamma_*}(t,X_t)\big)^2\Big] =  \int_0^t \frac{\rmd q}{\lambda(q)^2},\quad \mbox{and} \quad
\alpha \lambda(t)^2\E\Big[\big(\partial_x^2 \Phi_{\gamma_*}(t,X_{t})\big)^2\Big] = 1.
\end{align*}
\end{proposition}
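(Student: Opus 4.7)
The plan is to derive both identities as the first-order optimality conditions of the Parisi functional at $\gamma_*$. I would begin by computing the Gateaux derivative of $\gamma \mapsto \Phi_\gamma(0,0)$. Letting $w_\eta(t,x) := \partial_\epsilon \Phi_{\gamma_* + \epsilon \eta}(t,x)|_{\epsilon=0}$ for a bounded perturbation $\eta$ supported in $I := [\lbq,\ubq]$, differentiation of the Parisi $\PDE$~\eqref{eq:Parisi_PDE} shows that $w_\eta$ satisfies the linear equation
\[\partial_t w_\eta + \gamma_*(t) \partial_x\Phi_{\gamma_*} \partial_x w_\eta + \tfrac12 \partial_x^2 w_\eta + \tfrac12 \eta(t) (\partial_x\Phi_{\gamma_*})^2 = 0, \qquad w_\eta(\ubq,\cdot) = 0.\]
A Feynman--Kac representation along the SDE~\eqref{eq:SDE2} (whose strong well-posedness is guaranteed by the regularity bounds in Proposition~\ref{prop:properties} and Lemma~\ref{lem:time_derivative}) then gives
\[w_\eta(0,0) = \tfrac12 \int_0^{\ubq} \eta(t) \, \E\bigl[(\partial_x\Phi_{\gamma_*}(t,X_t))^2\bigr] \, \rmd t.\]

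Next, I handle the remaining terms of $\Par$. Fixing a common upper bound $\bar\ubq \ge \ubq$ valid for both $\gamma_*$ and $\gamma_* + \epsilon \eta$ (allowed by the invariance noted after~\eqref{eq:parisi_functional}) freezes the $\log(1-\bar\ubq)$ term, while Fubini applied to $\lambda_\eta(q) := \int_q^{1} \eta(t)\rmd t$ yields
\[\frac{\rmd}{\rmd\epsilon}\bigg|_{\epsilon=0} \frac12 \int_0^{\bar\ubq} \frac{\rmd q}{\lambda(q) + \epsilon \lambda_\eta(q)} = -\frac12 \int_0^1 \eta(t) \int_0^{t \wedge \bar\ubq} \frac{\rmd q}{\lambda(q)^2} \, \rmd t.\]
Combining with the previous display,
\[\frac{\rmd}{\rmd\epsilon}\bigg|_{\epsilon=0} \Par(\gamma_* + \epsilon \eta) = \frac12 \int_0^1 \eta(t) \left[\alpha \E\bigl[(\partial_x\Phi_{\gamma_*}(t,X_t))^2\bigr] - \int_0^{t} \frac{\rmd q}{\lambda(q)^2}\right] \rmd t.\]
Because $\gamma_*$ is strictly increasing on $I$, every $\eta \in L^\infty(I)$ (of either sign) produces an admissible perturbation $\gamma_* + \epsilon \eta \in \cuU$ for $\epsilon$ small, so optimality forces the bracketed quantity to vanish for Lebesgue-a.e.\ $t \in I$. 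Continuity in $t$ of $\E[(\partial_x\Phi_{\gamma_*}(t,X_t))^2]$ (consequence of Lemma~\ref{lem:useful}(ii)--(iii)) promotes this to the pointwise identity on $I$, giving the first equation.

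For the second identity I apply It\^o's formula to $t \mapsto \partial_x \Phi_{\gamma_*}(t,X_t)$: differentiating the Parisi $\PDE$ once in $x$ cancels the drift, leaving $\rmd\, \partial_x\Phi_{\gamma_*}(t,X_t) = \partial_x^2\Phi_{\gamma_*}(t,X_t)\,\rmd B_t$, and It\^o's isometry produces
\[\E\bigl[(\partial_x\Phi_{\gamma_*}(t,X_t))^2\bigr] = \E\bigl[(\partial_x\Phi_{\gamma_*}(0,0))^2\bigr] + \int_0^t \E\bigl[(\partial_x^2\Phi_{\gamma_*}(s,X_s))^2\bigr] \rmd s.\]
Comparing with the first identity (valid on $I$), differentiating in $t$ gives $\alpha \E[(\partial_x^2\Phi_{\gamma_*}(t,X_t))^2] = 1/\lambda(t)^2$ on $I$, i.e.\ the second stated equality.

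The main technical obstacle is the Feynman--Kac representation of $w_\eta(0,0)$: the source term $(\partial_x\Phi_{\gamma_*})^2$ is unbounded, growing quadratically in $-x$ by Eq.~\eqref{eq:dphi}, so one must verify integrability along the SDE. This is exactly the estimate derived in the proof of Proposition~\ref{prop:convergence_phi} via Gronwall's lemma (see~\eqref{eq:second_moment_Y}), which combined with the uniform smoothness of $\Phi_{\gamma_*}$ from Proposition~\ref{prop:properties} legitimizes the linearization and the differentiation under the expectation; the remaining manipulations are routine.
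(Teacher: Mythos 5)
Your proposal is correct and follows essentially the same route as the paper: a variational (first-order optimality) argument via the Gateaux derivative of $\Par$ at $\gamma_*$ against perturbations supported in $I$, followed by Itô's isometry and differentiation in $t$ for the second identity. The only cosmetic difference is that you derive the linearized PDE for $w_\eta$ and then apply Feynman--Kac, whereas the paper writes a Duhamel-type integral formula directly for the difference $\Phi_{\gamma^s}-\Phi_\gamma$ along an averaged drift SDE and then proves an $O(s)$ stability estimate via Gronwall; both yield the same representation $\frac{1}{2}\int_0^1\eta(t)\,\E[(\partial_x\Phi_{\gamma_*}(t,X_t))^2]\,\rmd t$ and the same final variational formula, and you correctly flag the integrability issue for the source term $(\partial_x\Phi_{\gamma_*})^2$, which is handled exactly as in the paper via the Gronwall bound~\eqref{eq:second_moment_Y}.
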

\begin{proof}
The proof is almost identical to the proof of Proposition 6.8 in~\cite{ams20}.   Let $\gamma \in \cuU$ and $\delta : [0,1] \to \R$ be a bounded function vanishing outside the interval $I$. Let $\gamma^s = \gamma + s\delta$. We have
 \[\phi_{\gamma^s}(t,x) - \phi_{\gamma}(t,x) =  \frac{1}{2}\int_t^{1} (\gamma^s(r)-\gamma(r))\E_{t,x}\Big[\big(\partial_x\Phi_{\gamma^s}(r,Y^s_r)\big)^2\Big] \rmd s,\]
 where $(Y^s_r)_{r \in [0,1]}$ solves the SDE $ \rmd Y^s_t = v(t,Y_t^s) \rmd t + \rmd B_t$, $v(t,x) = \frac{1}{2}\big(\partial_x\Phi_{\gamma^s}(t,x)+\partial_x\Phi_{\gamma}(t,x)\big)$. We will show that 
 \begin{equation}\label{eq:YOs}
 \E\Big[\big(\partial_x\Phi_{\gamma^s}(r,Y^s_r)\big)^2\Big] = \E\Big[\big(\partial_x\Phi_{\gamma}(r,X_r)\big)^2\Big] + O(s),
 \end{equation}
 uniformly in $r \in [0,\ubq]$. Assuming for a moment that this is the case, we immediately obtain
 \[\frac{\rmd \phi_{\gamma_* + s\delta}(0,0)}{\rmd s} \Big|_{s=0} = \frac{1}{2}\int_0^{1} \delta(r) \E\Big[\big(\partial_x\Phi_{\gamma_*}(r,X_r)\big)^2\Big] \rmd s.\]
Furthermore, letting $\lambda^s(q) = \int_t^1 \gamma^s(r) \rmd r$, a simple computation yields 
\[\frac{\rmd}{\rmd s} \int_0^{\ubq} \frac{\rmd q}{ \lambda^s(q)} \Big|_{s=0} = -\int_0^{\ubq} \frac{\int_q^1\delta(r) \rmd r}{ \lambda(q)^2} \rmd q
= -\int_0^{1} \delta(r) \int_0^{r}  \frac{\rmd q}{ \lambda(q)^2} \rmd r.\] 
This yields the expression 
\begin{equation}\label{eq:differential}
\frac{\rmd \Par}{\rmd s} \big(\gamma_* + s\delta\big) \Big|_{s=0} = \frac{1}{2} \int_0^1 \delta(t) \left(\alpha\E\big[\big(\partial_x \Phi_{\gamma_*}(t,X_t)\big)^2\big] - \int_0^t \frac{\rmd q}{\lambda(q)^2}\right)\rmd t  .
\end{equation}
Since $\gamma_*$ is in the interior of $\cuU$ by assumption, then the optimality of $\gamma_*$ as a minimizer of $\Par$ implies that the above differential is zero for all $\delta$ as considered above. Therefore 
\[\alpha \E\Big[\big(\partial_x \Phi_{\gamma_*}(t,X_t)\big)^2\Big] =  \int_0^t \frac{\rmd q}{\lambda(q)^2}\] 
for all $t \in [\lbq,\ubq]$. Further, since we have $\frac{\rmd}{\rmd t} \E\big[\big(\partial_x \Phi_{\gamma_*}(t,X_t)\big)^2\big] = \E\big[\big(\partial_x^2 \Phi_{\gamma_*}(t,X_t)\big)^2\big]$, we obtain by differentiating,
\[\alpha \lambda(t)^2\E\Big[\big(\partial_x^2 \Phi_{\gamma_*}(t,X_{t})\big)^2\Big] = 1.\]  
It remains to show~\eqref{eq:YOs}:
\begin{align*} 
&\Big| \E\Big[\big(\partial_x\Phi_{\gamma^s}(r,Y^s_r)\big)^2\Big] - \E\Big[\big(\partial_x\Phi_{\gamma}(r,X_r)\big)^2\Big] \Big| \\
&\le \Big| \E\Big[\big(\partial_x\Phi_{\gamma^s}(r,Y^s_r)\big)^2\Big] -  \E\Big[\big(\partial_x\Phi_{\gamma}(r,Y^s_r)\big)^2\Big] \Big| 
+ \Big| \E\Big[\big(\partial_x\Phi_{\gamma}(r,Y^s_r)\big)^2\Big] - \E\Big[\big(\partial_x\Phi_{\gamma}(r,X_r)\big)^2\Big] \Big| \\
&\le C s \E\Big[\partial_x\Phi_{\gamma^s}(r,Y^s_r) + \partial_x\Phi_{\gamma}(r,Y^s_r)\Big] 
+ C  \E\Big[\big|Y^s_{r} - X_r\big| \cdot \big(\partial_x\Phi_{\gamma}(r,Y^s_r) + \partial_x\Phi_{\gamma}(r,X_r)\big)\Big],
\end{align*}
where we used the fact that $x\mapsto \partial_x\Phi_{\gamma}(r,x)$ is Lipschitz in $x$ uniformly in $r \le \ubq$. 
We can show that $ \E\big[\partial_x\Phi_{\gamma^s}(r,Y^s_r)^2\big] \le C $ uniformly in $r \le \ubq$, in a way similar to the proof of item $(i)$ in Lemma~\ref{lem:useful}. It remains to show that $\E\big[\big|Y^s_{r} - X_r\big|^2\big] \le C s^2$. We have for $t \le ubq$,
\begin{align*}
|Y^s_t -X_t | &\le \int_0^t \big|v(r,Y^s_r) - \gamma(r)\partial_x\Phi_{\gamma}(r,X_r) \big| \rmd r \\
&\le \frac{1}{2} \int_0^t \gamma^s(r)\big|\partial_x\Phi_{\gamma^s}(r,Y^s_r) - \partial_x\Phi_{\gamma}(r,Y^s_r) \big| \rmd r \\
&~~~+ \int_0^t \gamma^s(r)\big|\partial_x\Phi_{\gamma}(r,Y^s_r) - \partial_x\Phi_{\gamma}(r,X_r) \big| \rmd r\\
&~~~+ \int_0^t \big|\gamma^s(r)-\gamma(r)\big|\partial_x\Phi_{\gamma}(r,X_r) \rmd r\\
&\le C s + \int_0^t \gamma(t) |Y^s_r - X_r| \rmd r + s \int_0^t |\delta(r)|  \partial_x\Phi_{\gamma}(r,X_r) \rmd r.
\end{align*} 
Therefore,
\[\E\big[|Y^s_t -X_t |^2\big] \le Cs^2 + \int_0^t \E\big[|Y^s_r - X_r|^2\big] \rmd r,\]
and we conclude via Gronwall's inequality.
\end{proof}

\subsection{Proof of Proposition~\ref{prop:properties}}
For $\gamma \in \SF_+$, we can exhibit an explicit solution to the Parisi $\PDE$~\eqref{eq:Parisi_PDE} via the Cole-Hopf transform. Indeed, for $\gamma(t) = m_i$ on the interval $[q_i,q_{i+1})$, one can check that the function    
\begin{equation}\label{eq:cole_hopf}
\Phi_{\gamma}(t,x) = \frac{1}{m_i} \log \E\big[e^{m_i \Phi_{\gamma}(q_{i+1}, x+ \sqrt{q_{i+1}-t}Z)}\big],
\end{equation}
where $Z \sim \normal(0,1)$, is a solution on $[q_i,q_{i+1})$. Since $u_0 \le 0$ and $u_0 \in C^{\infty}(\R)$, we can check by induction that $\Phi_{\gamma}(t,x) \le 0$ for all $t,x$, so the above expectation is finite, and that $\Phi_{\gamma} \in C^{\infty}([q_i,q_{i+1}) \times \R)$ for all $i$. The next lemma provides classical and very useful estimates on the inverse Mill's ratio.  

\begin{lemma}\label{lem:A}
Let $\mA(x) = e^{-x^2/2}/(\sqrt{2\pi} \mN(x))$ be the inverse Mill's ratio. We have the following for all $x \in \R$:
\begin{itemize}
\item[(i)] $x \le \mA(x)$.
\item[(ii)] $x\mA(x)\le 1 + x^2$.
\item[(iii)] $\mA'(x) = \mA^2(x)-x\mA(x)$.
\item[(iv)] $0 \le \mA'(x) \le 1$. 
\item[(v)] For all $k \ge 3$, $\|\mA^{(k)}\|_{L^{\infty}(\R)} <\infty$.
\end{itemize}
\end{lemma}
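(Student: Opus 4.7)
The plan is to handle (i) and (iii) by direct calculation, then leverage (i) to establish (iv), use (i) again for (ii), and treat (v) via induction with asymptotic analysis. Writing $\phi(x) := e^{-x^2/2}/\sqrt{2\pi}$ so that $\mN'=-\phi$ and $\mA=\phi/\mN$, the quotient rule together with $\phi'(x)=-x\phi(x)$ immediately yields (iii). For (i), when $x\le 0$ we have $\mA(x)>0\ge x$ trivially, while for $x>0$ the classical Mills estimate $\int_x^{\infty} e^{-t^2/2}\,dt \le \int_x^{\infty} (t/x) e^{-t^2/2}\,dt = e^{-x^2/2}/x$ rearranges to $\mA(x)\ge x$.

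The main obstacle is the upper bound $\mA'\le 1$ in (iv); the lower bound is immediate from writing $\mA'=\mA(\mA-x)$ and applying (i). My plan for the upper bound is to exploit the monotonicity of $\mA$ coming from (i), which gives $\phi(t)\mN(x)\ge \phi(x)\mN(t)$ for all $t\ge x$. Integrating from $x$ to $\infty$ and using $\int_x^{\infty}\phi(t)\,dt=\mN(x)$ yields
\[
\mN(x)^2\ge \phi(x)\int_x^{\infty}\mN(t)\,dt.
\]
I then introduce $G(x):=\phi(x)-x\mN(x)$: computing $G'(x)=-\mN(x)$ and noting $G(x)\to 0$ as $x\to+\infty$ shows $G(x)=\int_x^{\infty}\mN(t)\,dt$. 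Substituting and dividing by $\mN(x)^2$ gives $1\ge \mA^2(x)-x\mA(x)=\mA'(x)$.

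For (ii), I will set $h(x):=(1+x^2)\mN(x)\sqrt{2\pi}-xe^{-x^2/2}$, so that $h\ge 0$ is equivalent to the claim. A short computation collapses the derivative to $h'(x)=2\sqrt{2\pi}\,\mN(x)(x-\mA(x))$, which is $\le 0$ by (i). The Mills asymptotic $\mN(x)\sqrt{2\pi}=e^{-x^2/2}(x^{-1}-x^{-3}+O(x^{-5}))$ then shows $h(x)\to 0$ as $x\to+\infty$, so the non-increasing function $h$ with zero limit is non-negative on all of $\R$.

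Finally, for (v), I will differentiate the identity in (iii) inductively to express $\mA^{(k)}(x)=P_k(x,\mA(x))$ via the recursion $P_{k+1}=\partial_x P_k + (y^2-xy)\partial_y P_k$. Plugging in the Mills expansion $\mA(x)=x+x^{-1}-x^{-3}+O(x^{-5})$, one checks that the leading polynomial terms in $x$ inside $P_k(x,\mA(x))$ cancel (as already illustrated by $\mA''=\mA(2\mA^2-3x\mA+x^2-1)$ which is $O(x^{-1})$), yielding $\mA^{(k)}(x)\to 0$ at polynomial rate as $x\to+\infty$ for $k\ge 2$. At $-\infty$, $\mN\to 1$ forces $\mA(x)\sim\sqrt{2\pi}\,\phi(x)$, which decays super-polynomially and dominates the polynomial factor in $P_k$, so $\mA^{(k)}(x)\to 0$ as well. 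Combined with $\mA\in C^{\infty}(\R)$, boundedness on $\R$ follows from continuity on compact sets and vanishing at both infinities.
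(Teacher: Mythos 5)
Your treatment of items (i)--(iv) is correct, and (ii) and (iv) take a genuinely different route than the paper. For (iv) the paper invokes Sampford's variance inequality $\var(Z\mid Z\ge x)\ge 0$; you instead integrate the monotonicity inequality $\phi(t)\mN(x)\ge\phi(x)\mN(t)$ ($t\ge x$, monotonicity of $\mA$ itself coming from (i) together with (iii)) and identify $G(x)=\phi(x)-x\mN(x)=\int_x^\infty\mN$. For (ii) the paper expands $\int_x^\infty(t-x)^2e^{-t^2/2}\,\rmd t\ge 0$ while you exhibit a monotone function with vanishing limit; both are clean and elementary, and yours has the small advantage of reusing (i) and the Mills asymptotic rather than a second integration by parts.

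Item (v), however, has a genuine gap. You reduce correctly to showing $\mA^{(k)}(x)\to 0$ as $x\to+\infty$ (the $x\to-\infty$ side and the ``polynomial in $x$ and $\mA$'' reduction are fine, matching the paper). But the decisive step --- ``one checks that the leading polynomial terms in $x$ inside $P_k(x,\mA(x))$ cancel'' --- is an assertion, not an argument, and the cancellation is genuinely delicate. To see why, set $g:=\mA-x$, so that $g'=g^2+xg-1$ and $g^{(k)}=\mA^{(k)}$ for $k\ge 2$. Differentiating gives the recursion
\[
g^{(k+1)}=\sum_{j=0}^{k}\binom{k}{j}g^{(j)}g^{(k-j)}+k\,g^{(k-1)}+x\,g^{(k)}.
\]
If one tries the natural induction $g^{(j)}=\bigo(x^{-(j+1)})$, the terms $k\,g^{(k-1)}$ and $x\,g^{(k)}$ are each only $\bigo(x^{-k})$, so the induction does not close on its own: the two terms must cancel to two higher orders, and this cancellation depends on the specific coefficients in the Mills expansion (e.g.\ $g=x^{-1}-2x^{-3}+\cdots$ and $xg'=-x^{-1}+6x^{-3}+\cdots$ cancel at order $x^{-1}$). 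Verifying this for all $k$ requires either a careful inductive bookkeeping of the expansion coefficients or an external estimate; the paper is explicit that this step is non-trivial and outsources it to Corollary~1.6 of Pinelis. As written, your ``as illustrated by $\mA''$'' checks only one case and does not prove the general claim, so (v) is incomplete.

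Minor wording note: monotonicity of $\mA$ is not a consequence of (i) alone; it uses (iii) (to write $\mA'=\mA(\mA-x)$) and (i) together. Since you establish both first, the logic is sound, but the attribution to ``(i)'' is imprecise.
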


\begin{proof}

Items $(i),(ii)$ are standard. They can be obtained through intergration by parts from the observation $\int_x^{\infty} (t-x)^{k}e^{-t^2/2} \rmd t \ge 0$ for $k=1$ and $k=2$ respectively; see \cite{gordon1941values} or~\cite{talagrand2011mean2}. 
Item $(iii)$ is immediate from the quotient rule. The lower bound in item $(iv)$ follows from the previous statements. The upper bound, following  \cite{sampford1953some}, is elegantly obtained from the observation $\var(Z | Z \ge x) \ge 0$ where $Z \sim N(0,1)$.
For item $(v)$, we first note that by iterating item $(iii)$ we obtain inductively that $\mA^{(k)}(x)=\mA(x)\cdot P(x,\mA(x))$ for some polynomial $P$. Therefore it suffices to control $|\mA^{(k)}(x)|$ for $|x|$ sufficiently large. The bound on $\R_+$ is non-trivial and follows from Corollary 1.6 of \cite{pinelis2019exact}. For $\R_-$ it holds as $\mA(x)$ is exponentially small for $x\to-\infty$, this means we have $\lim_{x\to-\infty} \mA^{(k)}=0$. 

\end{proof}

\paragraph{Proof of~\eqref{eq:phi}.} Let $\gamma = \sum_{i=0}^n m_i\indi_{[q_{i},q_{i+1})} \in \SF_+$. We will proceed by induction. We have 
\begin{align*}
\Phi_{\gamma}(q_n,x) &= \log \mN\Big(\frac{\kappa - x}{\sqrt{1-\ubq}}\Big)\\
&= -\frac{1}{2}\frac{(\kappa - x)^2}{1-\ubq} - \log \mA\Big(\frac{\kappa - x}{\sqrt{1-\ubq}}\Big).
\end{align*}
If $\frac{\kappa - x}{\sqrt{1-\ubq}} \le 1$ then $\mA\big(\frac{\kappa - x}{\sqrt{1-\ubq}}\big) \le \frac{1}{\int_1^{\infty}e^{-z^2/2}\rmd z} :=A$,
and 
\[\Phi_{\gamma}(q_n,x) \ge  -\frac{1}{2}\frac{(\kappa - x)^2}{1-\ubq} - \log A.\]
If $\frac{\kappa - x}{\sqrt{1-\ubq}} \ge 1$  then $\mA\big(\frac{\kappa - x}{\sqrt{1-\ubq}}\big) \le 1+ \frac{\kappa - x}{\sqrt{1-\ubq}}$, and 
\begin{align*}
\Phi_{\gamma}(q_n,x) &\ge  -\frac{1}{2}\frac{(\kappa - x)^2}{1-\ubq} - \log \Big(1+ \frac{\kappa - x}{\sqrt{1-\ubq}}\Big)\\
&\ge  -\frac{3}{2}\frac{(\kappa - x)^2}{1-\ubq} -\log A.
\end{align*}
Assume now that for $i \le n-1$,
\[\Phi_{\gamma}(q_{i+1},x) \ge -\frac{a_i}{2}(\kappa - x)^2 - b_i.\] 
Then using formula~\eqref{eq:cole_hopf}, we have for $t \in [q_i,q_{i+1})$,
\begin{align*}
\Phi_{\gamma}(t,x) &\ge  \frac{1}{m_i} \log \E\big[e^{- \frac{m_i a_i}{2} (\kappa - x - \sqrt{q_{i+1}-t}Z)^2 - m_ib_i}\big]\\
&=  -\frac{1}{2}\frac{a_i(\kappa - x)^2}{1+a_im_i(q_{i+1}-t)} - \frac{1}{2m_i}\log \Big(1+a_im_i(q_{i+1}-t)\Big) - b_i\\
&\ge -\frac{a_i}{2}(\kappa - x)^2 - \frac{a_i(q_{i+1}-q_i)}{2} - b_i.
\end{align*}
So we have $\Phi_{\gamma}(t,x) \ge \frac{a}{2}(\kappa - x)^2 - b$ for all $(t,x) \in [0,\ubq]\times \R$, with 
 $a=\frac{3}{1-\ubq}$ and $b = b_0 = \sum_{i} a_i(q_{i+1}-q_i)/2 -\log A= \frac{3\ubq}{2(1-\ubq)} - \log A.$

\paragraph{Proof of~\eqref{eq:dphi}.} Let $\gamma = \sum_{i=0}^n m_i\indi_{[q_{i},q_{i+1})}  \in \SF_+$. We will proceed by induction.

We start with the lower bound. 
Using formula~\eqref{eq:cole_hopf} we have for $t \in [q_{i},q_{i+1})$,
\[\partial_x \Phi_{\gamma}(t,x) = \left\langle \partial_x \Phi_{\gamma}(q_{i+1},x + \sqrt{q_{i+1}-t}Z) \right\rangle_{i}, \] 
where we define
\[\langle f(Z) \rangle_i := \frac{\E\big[f(Z)e^{m_i \Phi_{\gamma}(q_{i+1}, x+ \sqrt{q_{i+1}-t}Z)}\big]}{\E\big[e^{m_i \Phi_{\gamma}(q_{i+1}, x+ \sqrt{q_{i+1}-t}Z)}\big]}.\]
Since $\mA(x)>0$ we see by induction that $\partial_x \Phi_{\gamma}(t,x)>0$ for all $t,x$. Next, since $\mA(x) \ge x$, we have
\[\partial_x \Phi_{\gamma}(q_n,x) = \frac{1}{\sqrt{1-\ubq}} \mA\Big(\frac{\kappa - x}{\sqrt{1-\ubq}}\Big) \ge \frac{\kappa - x}{1-\ubq} = \frac{\kappa - x}{\int_{q_n}^1 \gamma(q)\rmd q}.\]
(Recall that $\ubq= q_n$.) Assume that the above inequality holds for $t=q_{i+1}$. 
Then for $t \in [q_{i},q_{i+1})$, we obtain by Gaussian integration by parts
\begin{align*}
\partial_x \Phi_{\gamma}(t,x) &\ge \frac{\kappa - x -  \sqrt{q_{i+1}-t} \langle Z\rangle_i}{\int_{q_{i+1}}^1 \gamma(q)\rmd q}\\
&= \frac{\kappa - x -  m_i(q_{i+1}-t) \partial_x \Phi_{\gamma}(t,x) }{\int_{q_{i+1}}^1 \gamma(q)\rmd q}.
\end{align*}
I.e., $\partial_x \Phi_{\gamma}(t,x)  \ge \frac{\kappa - x}{\int_{t}^1 \gamma(q)\rmd q}$. This concludes our proof of the lower bound. 

We now prove the upper bound.
Given the same estimates used to prove Eq.~\eqref{eq:phi}, we have
\[\partial_x \Phi_{\gamma}(q_n,x) = \frac{1}{\sqrt{1-\ubq}} \mA\Big(\frac{\kappa - x}{\sqrt{1-\ubq}}\Big)
\le\frac{1}{\sqrt{1-\ubq}} \cdot 
\begin{cases}
A &\mbox{if } \frac{\kappa - x}{\sqrt{1-\ubq}} \le 1,\\
1+ \frac{\kappa - x}{\sqrt{1-\ubq}} & \mbox{otherwise},
 \end{cases}
 \]
 with $A = \frac{1}{\int_1^{\infty}e^{-z^2/2}\rmd z}$.
Now assume that 
\[\partial_x \Phi_{\gamma}(q_{i+1},x)  \le 
\begin{cases}
A_i &\mbox{if } \frac{\kappa - x}{\sqrt{1-\ubq}} \le 1,\\
B_i\Big(1+ \frac{\kappa - x}{\sqrt{1-\ubq}}\Big) & \mbox{otherwise},
 \end{cases}
 \]
 for two constants $A_i, B_i$ which depend only on $\ubq$.
Let $v(Z) =  \frac{\kappa - x - \sqrt{q_{i+1}-t}Z}{\sqrt{1-\ubq}}$. Using the induction hypothesis we get
\begin{align*}
\partial_x \Phi_{\gamma}(t,x) &\le A_n \big\langle \indi_{v(Z) \le 1} \big\rangle_i +  B_n\big(1 +  \big\langle v(Z) \indi_{v(Z) > 1} \big\rangle_i\big)\\
&= A_i \big\langle \indi_{v(Z) \le 1} \big\rangle_i + B_i\Big(1 +  \frac{\kappa-x}{\sqrt{1-\ubq}} \big\langle \indi_{v(Z) > 1} \big\rangle_i\Big) 
- B_i\sqrt{\frac{q_{i+1}-t}{1-\ubq}} \big\langle Z \indi_{v(Z) > 1}\big\rangle_i.
\end{align*}
Let $w(x) = \frac{\kappa-x-\sqrt{1-\ubq}}{\sqrt{q_{i+1}-t}}$ and  $\Sigma = \E\big[e^{m_i \Phi_{\gamma}(q_{i+1}, x+ \sqrt{q_{i+1}-t}Z)}\big]$. Using integration by parts we have
\begin{align*}
\big\langle Z\indi_{v(Z) > 1} \big\rangle_i &= \big\langle Z \indi_{Z < w(x)} \big\rangle_i\\
&= \frac{1}{\Sigma} \int_{-\infty}^{w(x)} z e^{-z^2/2} e^{m_i \Phi_{\gamma}(q_{i+1}, x+ \sqrt{q_{i+1}-t}Z)} \frac{\rmd z}{\sqrt{2\pi}} \\
&= \frac{1}{\Sigma} \Big(- \frac{e^{w(x)^2/2}}{\sqrt{2\pi}} e^{m_i \Phi_{\gamma}(q_{i+1}, x - \sqrt{1-\ubq})} \\
&~~~+ m_{i} \sqrt{q_{i+1}-t}\int_{-\infty}^{w(x)} e^{-z^2/2}  \partial_x\Phi_{\gamma}(q_{i+1}, x+ \sqrt{q_{i+1}-t}Z) e^{m_i \Phi_{\gamma}(q_{i+1}, x+ \sqrt{q_{i+1}-t}Z)} \frac{\rmd z}{\sqrt{2\pi}}\Big)\\
& =- \frac{e^{-w(x)^2/2} e^{m_i \Phi_{\gamma}(q_{i+1}, x - \sqrt{1-\ubq})}}{\sqrt{2\pi}\Sigma}  
+ m_i  \sqrt{q_{i+1}-t} \big\langle \partial_x\Phi_{\gamma}(q_{i+1}, x+ \sqrt{q_{i+1}-t}Z)  \indi_{v(Z) > 1} \big\rangle_i\\
&\ge - \frac{e^{-w(x)^2/2} e^{m_i \Phi_{\gamma}(q_{i+1}, x - \sqrt{1-\ubq})}}{\sqrt{2\pi}\Sigma}.
\end{align*}
The last inequality follows since $\partial_x\Phi_{\gamma} \ge 0$. Now we argue that the above term is lower-bounded by a constant independent of $x$. Using the lower bound in Eq~\eqref{eq:phi}, 
\[\Sigma \ge \E\big[ e^{-\frac{3m_i}{2}v(Z)^2 -c_0 m_i} \big] = \frac{e^{-c_0m_i}}{\sqrt{1+ \sigma_i}} e^{-\frac{3}{2} \frac{m_i}{1+ \sigma_i} \frac{(\kappa-x)^2}{1-\ubq}}, \] 
with $\sigma_i = 3m_i \frac{q_{i+1}-t}{1-\ubq} \le 3m_i  \frac{q_{i+1}}{1-\ubq}$.
Since $\Phi_{\gamma}\le 0$, we obtain the bound
\[\frac{e^{-w(x)^2/2} e^{m_i \Phi_{\gamma}(q_{i+1}, x - \sqrt{1-\ubq})}}{\sqrt{2\pi}\Sigma} \le C \exp\Big(\frac{3}{2} \frac{m_i}{1+ \sigma_i} \frac{(\kappa-x)^2}{1-\ubq} -  \frac{1}{2}w(x)^2\Big),\]
where $C = C(\ubq)$. (This is the case since $0 \le m_i\le 1$ and $0 \le q_i \le \ubq$.)
Now suffices to check that $3 \frac{m_i}{1+ \sigma_i} \frac{1}{1-\ubq}< \frac{1}{q_{i+1}-t}$ to ensure that the above expression is globally upper bounded by a constant. This is indeed the case since $\ubq<1$.
Putting things together, for $t \in [q_{i},q_{i+1})$ we have 
\begin{align*}
\partial_x \Phi_{\gamma}(t,x) &\le A_i \big\langle \indi_{v(Z) \le 1} \big\rangle_i + B_i\Big(1 +  \frac{\kappa-x}{\sqrt{1-\ubq}} \big\langle \indi_{v(Z) > 1} \big\rangle_i\Big) + C_i\\
&\le \tilde{C}_i\Big(1 +  \frac{\kappa-x}{\sqrt{1-\ubq}} \vee 1\Big).
\end{align*}
where $\tilde{C}_i$ depends on $A_i, B_i$ and $C_i$, hence only on $\ubq$. This concludes our proof of the upper bound.

\paragraph{Proof of~\eqref{eq:ddphi}.} Let $\gamma \in \SF_+$. We proceed as usual by induction. We start with the lower bound. Using item $(iv)$ of Lemma~\ref{lem:A} we have
\[\partial_x^2 \Phi_{\gamma}(q_n,x) = - \frac{1}{1-\ubq} \mA'\Big(\frac{\kappa - x}{\sqrt{1-\ubq}}\Big) \ge -  \frac{1}{1-\ubq}.\]
Now, using formula~\eqref{eq:cole_hopf}, we have for $t \in [q_{i},q_{i+1})$,
\begin{align}\label{eq:second_derivative}
\partial_x^2 \Phi_{\gamma}(t,x) &= \left\langle \partial_x^2 \Phi_{\gamma}(q_{i+1},x + \sqrt{q_{i+1}-t}Z) \right\rangle_{i} \\
&~~+ m_i \Big(\left\langle \partial_x \Phi_{\gamma}(q_{i+1},x + \sqrt{q_{i+1}-t}Z)^2 \right\rangle_{i}-\left\langle \partial_x \Phi_{\gamma}(q_{i+1},x + \sqrt{q_{i+1}-t}Z) \right\rangle_{i}^2\Big).\nonumber
\end{align}
Since the expression in the last line is non-negative, the lower bound obtained at $t=q_n$ holds at all times.  
We now deal with the more computation-heavy upper bound. 
Without loss of generality it is enough to show that $\partial_x^2\Phi_{\gamma}(q_i,x) \le 0$ for all $i$.  
We apply formula~\eqref{eq:second_derivative} recursively to obtain for $j \le n$,
\begin{align}\label{eq:second_derivative_unfolded}
\partial_x^2 \Phi_{\gamma}(q_{j-1},x) &= \omega_{j}^n\Big( \partial_x^2 \Phi_{\gamma}\big(q_{n},x + \sum_{k=j}^n  \sqrt{q_{k}-q_{k-1}}Z_k\big) \Big) \\
&~~+ \sum_{i=j}^n m_{i-1} \Biggr(\omega_{j}^i\Big( \partial_x \Phi_{\gamma}\big(q_{i},x + \sum_{k=j}^{i} \sqrt{q_{k}-q_{k-1}}Z_k)^2 \Big) \nonumber\\
&\hspace{3cm}- \omega_{j}^{i-1}\Big( \partial_x \Phi_{\gamma}\big(q_{i-1}, x + \sum_{k=j}^{i-1} \sqrt{q_{k}-q_{k-1}}Z_k \big)^2\Big) \Biggr).\nonumber
\end{align}
In the above, $Z_1,\cdots,Z_n$ are i.i.d.\ $N(0,1)$ random variables. Next we define the symbols $\omega_{j}^i$. Let
\[\Sigma_{j}^i := \sum_{k=j}^{i} \sqrt{q_{k}-q_{k-1}}Z_k.\] 
Then for $j \le i$,
\[\omega_{j}^i\big(f\big((Z_k)_j^i\big)\big) := \frac{\E\big[\omega_{j+1}^{i}\big(f\big(Z_j, (Z_k)_{j+1}^i \big)\big) e^{m_{j-1} \Phi_{\gamma}(q_{j}, x+\Sigma_{j}^{i})} \, \big| \, Z_{j+1},\cdots,Z_i \big]}{\E\big[e^{m_{j-1} \Phi_{\gamma}(q_{j}, x+ \Sigma_{j}^{i})} \, \big| \, Z_{j+1},\cdots,Z_i  \big]}.\] 
$\omega_{i+1}^{i} \equiv \mbox{id}$. (Note that $\omega_{i}^{i}(\cdot) = \langle \cdot \rangle_j$.)
A substantial part of the computation is to bound the first expression in~\eqref{eq:second_derivative_unfolded}.  
For ease of notation we let $V_{j}^n = \frac{\kappa - x - \Sigma_j^n}{\sqrt{1-\ubq}}$. Using item $(iii)$ of Lemma~\ref{lem:A},
\begin{align}
\partial_x^2 \Phi_{\gamma}(q_{n},x + \Sigma_j^n)  &= - \frac{1}{1-\ubq} \mA'(V_{j}^n) \label{eq:phi2} \\
&= \frac{1}{1-\ubq} \left(V_{j}^n \mA(V_{j}^n) -\mA(V_{j}^n)^2 \right),\nonumber
\end{align}
and
\begin{equation}\label{eq:v_A}
\omega_{j}^n\big( V_{j}^n  \mA(V_{j}^n) \big)
= \frac{\kappa - x}{\sqrt{1-\ubq}} \, \omega_{j}^n\left(\mA(V_{j}^n) \right) - \sum_{k=j}^n \sqrt{\frac{q_{k}-q_{k-1}}{1-\ubq}} \, \omega_{j}^n\left( Z_k \cdot \mA(V_{j}^n) \right).
\end{equation}
Furthermore, by the rule of iterated expectations and then Gaussian integration by parts,  
\begin{align}\label{eq:z_omega}
\omega_{j}^n\left( Z_k \cdot \mA(V_{j}^n) \right) 
&= \omega_{j}^{k} \left( Z_k \cdot \omega_{k+1}^n\left(\mA(V_{j}^n) \right)\right)\nonumber\\
&= \sqrt{q_k - q_{k-1}} \, \omega_{j}^{k} \left( \frac{\rmd}{\rmd x}  \omega_{k+1}^n\big(\mA(V_{j}^n)\big)\right)\\
&~~~+ m_{k-1}\sqrt{q_{k} - q_{k-1}} \, \omega_{j}^{k} \left( \partial_x \Phi_{\gamma}(q_{k}, x+ \Sigma_{j}^{k}) \cdot \omega_{k+1}^n\left(\mA(V_{j}^n) \right)\right).\nonumber
\end{align}
On the one hand, 
\begin{align*}
\omega_{j}^{k} \left( \partial_x \Phi_{\gamma}(q_{k}, x+ \Sigma_{j}^{k}) \cdot \omega_{k+1}^n\left(\mA(V_{j}^n) \right)\right)
&=\sqrt{1-\ubq}  \, \omega_{j}^{k} \left( \partial_x \Phi_{\gamma}(q_{k}, x+ \Sigma_{j}^{k}) \cdot \omega_{k+1}^n\left(\partial_x \Phi_{\gamma}(q_{n}, x+ \Sigma_{j}^{n}) \right)\right)\\
&= \sqrt{1-\ubq} \, \omega_{j}^{k} \left( \partial_x \Phi_{\gamma}(q_{k}, x+ \Sigma_{j}^{k})^2\right).
\end{align*} 
On the other hand we have the recursion
\begin{align*} 
 \frac{\rmd}{\rmd x} \omega_{k+1}^n\big(\mA(V_{j}^n)\big) &=  \frac{\rmd}{\rmd x} \frac{\E\big[\omega_{k+2}^{n}\big(\mA(V_{j}^n)\big) e^{m_k \Phi_{\gamma}(q_{k+1}, x+\Sigma_{j}^{k+1})} \, \big| \, Z_{k+2},\cdots,Z_n \big]}{\E\big[e^{m_k \Phi_{\gamma}(q_{k+1}, x+ \Sigma_{j}^{k+1})} \, \big| \, Z_{k+2},\cdots,Z_n  \big]}\\
&= \Big\langle  \frac{\rmd}{\rmd x}  \omega_{k+2}^n\big(\mA(V_{j}^n)\big)  \Big\rangle_{k+1} \\
&~~~ + m_{k} \big\langle  \omega_{k+2}^n\big(\mA(V_{j}^n)\big) \partial_x \Phi_{\gamma}(q_{k+1}, x+ \Sigma_{j}^{k+1}) \big\rangle_{k+1} \\
&~~~ -  m_{k} \big\langle  \omega_{k+2}^n\big(\mA(V_{j}^n)\big) \big\rangle_{k+1} \cdot \big \langle \partial_x \Phi_{\gamma}(q_{k+1}, x+ \Sigma_{j}^{k+1})\big\rangle_{k+1}\\
&= \Big\langle  \frac{\rmd}{\rmd x}  \omega_{k+2}^n\big(\mA(V_{j}^n)\big)  \Big\rangle_{k+1} \\
&~~~ + m_{k}\sqrt{1-\ubq} \Big( \big\langle \partial_x \Phi_{\gamma}(q_{k+1}, x+ \Sigma_{j}^{k+1})^2 \big\rangle_{k+1} 
-  \partial_x \Phi_{\gamma}(q_{k}, x+ \Sigma_{j}^{k})^2 \Big).
\end{align*}
We write an explicit formula for the above recursion:  
\begin{align*} 
 \frac{\rmd}{\rmd x} \omega_{k+1}^n\big(\mA(V_{j}^n)\big) &= - \frac{1}{\sqrt{1-\ubq}} \, \omega_{k+1}^n\big(\mA'(V_{j}^n)\big) \\
 &~~~+ \sqrt{1-\ubq} \sum_{l=k+1}^n m_{l-1} \Big( \omega_{k+1}^{l} \big(\partial_x \Phi_{\gamma}(q_{l}, x+ \Sigma_{j}^{l})^2\big)
-  \omega_{k+1}^{l-1} \big(\partial_x \Phi_{\gamma}(q_{l-1}, x+ \Sigma_{j}^{l-1})^2 \big) \Big).
\end{align*} 
Plugging this into Eq.~\eqref{eq:z_omega} we obtain
\begin{align*} 
\omega_{j}^n\left( Z_k \cdot \mA(V_{j}^n) \right) &= -\sqrt{\frac{q_k - q_{k-1}}{1-\ubq}} \,   \omega_{j}^n\big(\mA'(V_{j}^n)\big)\\
&+ \sqrt{(q_k - q_{k-1})(1-\ubq)} \sum_{l=k+1}^n m_{l-1} \Big( \omega_{j}^{l} \big(\partial_x \Phi_{\gamma}(q_{l}, x+ \Sigma_{j}^{l})^2\big)
-  \omega_{j}^{l-1} \big(\partial_x \Phi_{\gamma}(q_{l-1}, x+ \Sigma_{j}^{l-1})^2 \big) \Big)\\
&+  \sqrt{(q_k - q_{k-1})(1-\ubq)} \, m_{k-1}  \omega_{j}^{k} \left( \partial_x \Phi_{\gamma}(q_{k}, x+ \Sigma_{j}^{k})^2\right).
\end{align*} 
 Pugging this into Eq.~\eqref{eq:v_A} we obtain 
 \begin{align*}
 \omega_{j}^n\big( \mA(V_{j}^n)' \big) &= \omega_{j}^n\big( \mA(V_{j}^n)^2 \big) - \omega_{j}^n\big( V_{j}^n  \mA(V_{j}^n) \big)\\
 &= \omega_{j}^n\big( \mA(V_{j}^n)^2 \big) - \frac{\kappa - x}{\sqrt{1-\ubq}} \, \omega_{j}^n\left(\mA(V_{j}^n) \right) 
+ \sum_{k=j}^n \sqrt{\frac{q_{k}-q_{k-1}}{1-\ubq}} \, \omega_{j}^n\left( Z_k \cdot \mA(V_{j}^n) \right)\\
&= \omega_{j}^n\big( \mA(V_{j}^n)^2 \big) - \frac{\kappa - x}{\sqrt{1-\ubq}} \, \omega_{j}^n\left(\mA(V_{j}^n) \right) \\
 &~~~ - \sum_{k=j}^n \frac{q_{k}-q_{k-1}}{1-\ubq} \,  \omega_{j}^n\big( \mA(V_{j}^n)' \big) \\
 &~~~ + \sum_{k=j}^n (q_{k}-q_{k-1}) 
  \sum_{l=k+1}^n m_{l-1} \Big( \omega_{j}^{l} \big(\partial_x \Phi_{\gamma}(q_{l}, x+ \Sigma_{j}^{l})^2\big)
-  \omega_{j}^{l-1} \big(\partial_x \Phi_{\gamma}(q_{l-1}, x+ \Sigma_{j}^{l-1})^2 \big) \Big)\\
 &~~~+\sum_{k=j}^n (q_{k}-q_{k-1}) m_{k-1} \,  \omega_{j}^{k} \left( \partial_x \Phi_{\gamma}(q_{k}, x+ \Sigma_{j}^{k})^2\right).
 \end{align*}
 Hence, we obtain an expression for $ \omega_{j}^n\big( \mA(V_{j}^n)' \big)$, and therefore also for $ \omega_{j}^n\big(\partial_x^2 \Phi_{\gamma}(q_{n},x + \Sigma_j^n) \big)$ via the relation~\eqref{eq:phi2}:
  \begin{align}\label{eq:omega_Ap}
 \frac{1-q_{j-1}}{1-\ubq} \, &\omega_{j}^n\big( \mA(V_{j}^n)' \big) 
= \omega_{j}^n\big( \mA(V_{j}^n)^2 \big) - \frac{\kappa - x}{\sqrt{1-\ubq}} \, \omega_{j}^n\left(\mA(V_{j}^n) \right) \\
 & + \sum_{l=j+1}^n (q_{l-1}-q_{j-1}) 
   m_{l-1} \Big( \omega_{j}^{l} \big(\partial_x \Phi_{\gamma}(q_{l}, x+ \Sigma_{j}^{l})^2\big)
-  \omega_{j}^{l-1} \big(\partial_x \Phi_{\gamma}(q_{l-1}, x+ \Sigma_{j}^{l-1})^2 \big) \Big)\nonumber \\
&+\sum_{k=j}^n (q_{k}-q_{k-1}) m_{k-1} \,  \omega_{j}^{k} \left( \partial_x \Phi_{\gamma}(q_{k}, x+ \Sigma_{j}^{k})^2\right).\nonumber
 \end{align}
 At this stage we want to eliminate the explicit appearance of $\kappa - x$ in the above expression. For this we use the lower bound~\eqref{eq:dphi} with $t=q_{j-1}$:
 \[\frac{\kappa - x}{\int_{q_{j-1}}^1 \gamma(q)\rmd q} \le \partial_{x} \Phi_{\gamma}(q_{j-1},x).\]
(This will be the only inequality used in this argument.) Further, we also have 
\[\frac{1}{\sqrt{1-\ubq}} \, \omega_{j}^n\left(\mA(V_{j}^n) \right) = \partial_{x} \Phi_{\gamma}(q_{j-1},x),\]
 so we obtain a lower bound for the right-hand side of Eq.~\eqref{eq:omega_Ap}:
 \begin{align*}
\omega_{j}^n\big( \mA(V_{j}^n)^2 \big) -  \frac{\kappa - x}{\sqrt{1-\ubq}} \, \omega_{j}^n\big( \mA(V_{j}^n) \big) 
&\ge  \omega_{j}^n\big( \mA(V_{j}^n)^2 \big) - \Big(\int_{q_{j-1}}^1 \gamma(q)\rmd q\Big) \partial_{x} \Phi_{\gamma}(q_{j-1},x)^2 \\ 
&= (1-\ubq) \Big(\omega_{j}^{n} \big(\partial_x \Phi_{\gamma}(q_{n}, x+ \Sigma_{j}^{n})^2\big)
-   \partial_{x} \Phi_{\gamma}(q_{j-1},x)^2\Big)\\
&~~~ - \sum_{k=j}^n  (q_{k}-q_{k-1}) m_{k-1}  \partial_{x} \Phi_{\gamma}(q_{j-1},x)^2.
\end{align*}
Let us define
 \begin{align*}
 \Delta_j^{k} := \omega_{j}^{k} \big(\partial_x \Phi_{\gamma}(q_{k}, x+ \Sigma_{j}^{k})^2\big)
-  \omega_{j}^{k-1} \big(\partial_x \Phi_{\gamma}(q_{k-1}, x+ \Sigma_{j}^{k-1})^2\big) \ge 0.  
 \end{align*}
 At last, we put everything together in Eq.~\eqref{eq:second_derivative_unfolded} to obtain
 \begin{align*}
 \partial_x^2 \Phi_{\gamma}(q_{j-1},x) &\le  \sum_{k=j}^n \Big(m_{k-1}  - \frac{1-\ubq}{1-q_{j-1}} - \frac{q_{k-1}-q_{j-1}}{1-q_{j-1}} m_{k-1} \Big)\Delta_{j}^k \\
 & - \sum_{k=j}^n  \frac{q_{k}-q_{k-1}}{1-q_{j-1}}  m_{k-1} \,  \omega_{j}^{k} \left( \partial_x \Phi_{\gamma}(q_{k}, x+ \Sigma_{j}^{k})^2\right)
+   \sum_{k=j}^n\frac{q_{k}-q_{k-1}}{1-q_{j-1}} m_{k-1}  \partial_{x} \Phi_{\gamma}(q_{j-1},x)^2\\
 &= \sum_{k=j}^n \delta_k \Delta_k ,
 \end{align*}
with 
 \begin{align*}
 \delta_k &= m_{k-1}  - \frac{1-\ubq}{1-q_{j-1}} - \frac{q_{k-1}-q_{j-1}}{1-q_{j-1}} m_{k-1} - \sum_{l=k}^n \frac{q_{l}-q_{l-1}}{1-q_{j-1}}  m_{l-1}\\
 &= -\frac{1}{1-q_{j-1}}\sum_{l=k}^{n+1} (q_{l}-q_{l-1})(m_{l-1}-m_{k-1}).
 \end{align*}
 Wee readily see that $\delta_k \le 0$ when $m_1 \le m_2 \le \cdots \le m_n$, i.e., when $\gamma$ is non-decreasing.
 
\begin{remark}
The above bound can also be obtained (perhaps in a shorter and more transparent way) via stochastic calculus, provided one is allowed to use it.  Indeed, we do not apriori know that the SDE~\eqref{eq:SDE2} has a solution; that is, prior to proving that $\partial_x^2\Phi_{\gamma}$ is bounded. We opted instead for the above discrete-time approach.    
\end{remark}

\paragraph{Proof of~\eqref{eq:higher_dphi}.}
Let $\gamma \in \SF_+$. For $\psi_k := \partial_x^k\Phi_{\gamma}$ we have 
\begin{align*}
\partial_t\psi_k + \frac{1}{2}\gamma(t)\partial_x^k\big[\big(\partial_x \Phi_{\gamma}\big)^2\big] +  \frac{1}{2}\partial^{2}_{x}\psi_k &= 0, \\ \mbox{and}~~~\psi_k(\ubq,x) &= \frac{(-1)^k}{(1-\ubq)^{k/2}}u_0^{(k)}\Big(\frac{\kappa- x}{\sqrt{1-\ubq}}\Big).
\end{align*}
Given the bounds~\eqref{eq:dphi} and~\eqref{eq:ddphi}, we are in a position to use stochastic calculus. Consider $(X_t)$, the unique strong solution to the SDE $\rmd X_t = \gamma(t) \partial_x\Phi_{\gamma}(t,X_t) \rmd t + \rmd B_t$ with $X_{0}=0$. 
Since $\psi_k$ is $C^{\infty}([a,b]\times \R)$ on intervals $[a,b]$ of continuity of $\gamma$, we can use It\^o's formula to write
\begin{align*}
\rmd \psi_k(t,X_t) &= \Big(\partial_t\psi_k + \gamma\partial_x \Phi_{\gamma} \partial_x\psi_k +  \frac{1}{2}\partial^{2}_{x}\psi_k\Big)(t,X_t) \rmd t + \partial_x \psi_k (t,X_t) \rmd B_t\\
&= -\frac{\gamma(t)}{2}\sum_{j=1}^{k-1} {k\choose j} \psi_{j+1}(t,X_t) \psi_{k-j+1}(t,X_t) \rmd t +  \psi_{k+1} (t,X_t) \rmd B_t.
\end{align*}
Whence,
\begin{align*}
\psi_k(t,x) &= k\E_{t,x}\int_t^{\ubq} \gamma(s)\psi_{2}(s,X_s) \psi_{k}(s,X_s) \rmd s \\  
&~~~+\frac{(-1)^k}{(1-\ubq)^{k/2}} \E_{t,x} \left[u_0^{(k)}\Big(\frac{\kappa-X_{\ubq}}{\sqrt{1-\ubq}}\Big) \right] \\
&~~~+ \frac{1}{2}\sum_{j=2}^{k-2} {k\choose j} \E_{t,x}\int_t^{\ubq} \gamma(s)\psi_{j+1}(s,X_s) \psi_{k-j+1}(s,X_s) \rmd s.
\end{align*}
(The sum in the last line is vacuous for $k=3$.)

Let  $y_k(t) = \|\psi_k(t,\cdot)\|_{L^{\infty}(\R)}$. Since $|\psi_2| \le \frac{1}{1-\ubq}$ (Eq~\eqref{eq:ddphi}) and $0 \le \gamma \le 1$, we obtain for $k= 3$,
\begin{align*}
y_3(t) \le \frac{3}{1-\ubq} \int_t^{\ubq} y_{3}(s) \rmd s + \frac{1}{(1-\ubq)^{3/2}} \|u_0^{(3)}\|_{L^{\infty}(\R)}.
\end{align*}
So by Gronwall's lemma, $y(t) \le C_0 \|u_0^{(3)}\|_{L^{\infty}(\R)}$ for all $t \le\ubq$, where $C_0$ depends only on $\ubq$. Hence  $\|\psi_3\|_{L^{\infty}([0,\ubq]\times \R)} <\infty$.
Next, assume for the sake of induction that 
\[\|\psi_j\|_{L^{\infty}([0,\ubq]\times \R)} <\infty,~~~ \forall\,  3 \le j \le k-1.\]
Then we obtain via Gronwall's lemma 
\[\|\psi_k\|_{L^{\infty}([0,\ubq]\times \R)} \lesssim_{\ubq,k} \|u_0^{(k)}\|_{L^{\infty}(\R)} + \frac{1}{2}\sum_{j=2}^{k-2} {k\choose j} \|\psi_{j+1}\|_{L^{\infty}([0,\ubq]\times \R)} \cdot \|\psi_{k-j+1}\|_{L^{\infty}([0,\ubq]\times \R)} <\infty.\] 
(Here, $A \lesssim_{\ubq,k} B$ means $A \le C B$ where $C$ depends only on $\ubq$ and $k$.)

\section*{Acknowledgments} 
We are grateful to Pierfrancesco Urbani for instructive discussions about the jamming transition and its connection to the perceptron problem. We thank Yin Tat Lee for suggesting the use of the result in \cite{jiang2020improved} and Chris Jones for bringing~\cite{jones2020spherical} to our attention. Part of this work was done while the authors were visiting the Simons Institute for the Theory of Computing as part of the fall 2020 program on Probability, Geometry and Computation in High Dimensions.

\begin{small}

\bibliographystyle{alpha}
\bibliography{all-bib}

\end{small}

\end{document}